\def\RR{\mathbb{R}}
\def\EE{\mathbb{E}}
\def\cF{\mathcal{F}}
\def\cH{\mathcal{H}}
\def\cS{\mathcal{S}}
\def\DD{\mathbb{D}}
\def\ZZ{\mathbb{Z}}
\def\PP{{\mathbb{P}}}
\let\Om=\Omega
\let\al=\alpha
\def\Om{{\Omega}}
\def\al{{\alpha}}
\def\be{{\beta}}
\def\Ga{{\Gamma}}
\def\de{{\delta}}
\def\De{{\Delta}}
\def\la{{\lambda}}
\def\11{{\bf 1\mkern -7mu I}}
\def\de{{\delta}}
\def\al{{\alpha}}
\def\d{{\textrm{d}}}
\def\cP{\mathcal{P}}
\newcommand{\lcl}{\left\{}
\newcommand{\rcl}{\right\}}
\newcommand{\ep}{\varepsilon}
\newcommand{\lc}{\left(}
\newcommand{\rc}{\right)}
\def\De{{\Delta}}
\newtheorem{theorem}{Theorem}[section]
\newtheorem{lemma}[theorem]{Lemma}
\newtheorem{proposition}[theorem]{Proposition}
\newtheorem{assumption}[theorem]{Assumption}
\newtheorem{remark}[theorem]{Remark}
\newtheorem{prop}[theorem]{Proposition}
\newtheorem{definition}[theorem]{Definition}
\theoremstyle{definition}
\newtheorem{example}[theorem]{Example}
\numberwithin{equation}{section}
\begin{document}

\title{Backward Euler method for stochastic differential equations with non-Lipschitz coefficients}

\author{Hao Zhou}
\address{School of Mathematics, Harbin Institute of Technology, Harbin, 150001, China}
\email{haozhouhit@163.com}
\thanks{H. Zhou is supported by the China Scholarship Council.}

\author{Yaozhong Hu}
\address{Department of Mathematical and Statistical Sciences, University of Alberta, Edmonton, AB T6G 2G1, Canada}
\thanks{Y. Hu is supported by   an NSERC discovery grant   and a startup fund from   University of Alberta at Edmonton.}
\email{yaozhong@ualberta.ca}


\author{Yanghui Liu$^{\ast}$}
\address{Department of Mathematics, Baruch College, CUNY, New York, NY 10010, USA}
\email{yanghui.liu@baruch.cuny.edu}
\thanks{$^{\ast}$ Y. Liu is supported by the PSC-CUNY Award 64353-00 52.}




\maketitle

\begin{abstract}
We study  the traditional backward Euler method for  $m$-dimensional stochastic differential equations  driven by fractional Brownian motion with  Hurst parameter $H>1/2$ whose drift coefficient satisfies the one-sided Lipschitz condition. 
The  backward Euler scheme is
proved to be of order $1$ and this rate is optimal
by showing the asymptotic error distribution  
 result. Two numerical experiments are performed to validate our claims about the optimality of the rate of convergence.
\end{abstract}

\section{Introduction}

In this  paper  we consider the following $m$-dimensional stochastic differential equation (SDE for short):
\begin{align}\label{maineq}
\d X_t = b(X_t)\d t +  \d B_t,\quad \quad t\in(0,T],
\end{align}
where
$B_t$ is  a fractional Brownian motion  (fBm for short) with Hurst parameter $H>1/2$
 and
$X_0=x_0$ is a constant in $\RR^m$.
We assume that   the drift function  $b: \RR^{m}\to \RR^{m}$  satisfies the one-sided Lipschitz condition and that $b$ and its derivatives may be unbounded. One typical example that we have in mind is given by $b(x)=x-x^{3}$, $x\in\RR$ or any polynomial of odd degree with a negative leading coefficient.  The existence and uniqueness of  the solution for this equation 
 can be found    in e.g. \cite{hu2008singular, hairer2005ergodicity, boufoussi2005kramers}.
 The work \cite{hairer2005ergodicity} studied the ergodicity of the solution
for this type of equation.

  In this paper, we consider the numerical approximation of the equation.
The numerical approximation  of SDE  driven by Brownian motion is well studied; see e.g. the monographs  \cite{KP, M}.
The numerical approximation
for fractional SDE has also   received  much attentions in  recent works; see e.g. \cite{kloeden2011multilevel, mishura2008stochastic, riedel2020semi, hu2016rate, hong2020optimal}.
The  optimal approximation for the  additive noise SDE   is considered in   \cite{Neuenkirch2}.
The multilevel Monte Carlo method for additive SDE is demonstrated in
\cite{kloeden2011multilevel}.

The aim of this paper is  to study the numerical approximation of the SDE \eqref{maineq} under the   condition that the drift function $b$ is   one-sided Lipschitz only. It is well-known that for such SDE the explicit methods are  conditionally stable and    implicit methods are usually preferred;   see  Table \ref{table.new} in Example~\ref{example1} for  performance   comparison     among some numerical methods.
The numerical approximation for  such SDE has been investigated  in~\cite{riedel2020semi}, in which
the   almost sure convergence rate  $H$  of the backward Euler method is obtained.

In this paper, we focus on the backward Euler method. We  prove that the strong convergence  rate   for the backward Euler method is 1.
 The main step in our proof is to derive a decomposition  (see \eqref{eqn.zt} and  \eqref{eqn.nz}) of the error process in terms of the quantities of the form $(\text{Id}-\Delta \partial b   )^{-1}$ which is shown to be bounded by $e^{\lambda\Delta}$
 under the one-sided Lipschitz condition. Here $\partial b$ is   the Jacobian of  $b$, $\lambda$ the largest eigenvalue of $\partial b$, $\Delta>0$ some small number, and $\text{Id}$ is the identity matrix.
  This decomposition together with some Malliavin calculus techniques allow us to treat   the difficulties due to the unboundedness of the coefficient in the equation.
 In the second part of our paper, we  show  that  the asymptotic  error of the backward Euler method   converges in probability  to the solution of a linear SDE.  
    Our main approach in this part   is the combination of  a variation of parameter method   and   limit theorems for weighted random  sums (see \cite{hu2016rate, LT}).

The remainder of the paper is structured as follow.
Section \ref{Preliminaries} concerns some basic preparation work on  Malliavin calculus and Young integrals.
The properties of the exact solution and Malliavin derivatives of the SDE \eqref{maineq} are shown in Section \ref{well-posedness}.
In Section \ref{BEM}, the convergence rate of the backward Euler method is investigated.
The convergence rate is proved to be an optimal one in Section \ref{limit}.
Then in the Section \ref{Experiments}, we report    our numerical experiments and  illustrate the accuracy of the  method.

\section{Preliminaries}\label{Preliminaries}

\subsection{Elements of Malliavin calculus}

Let $(\Om, \cF, \PP)$ be a probability space  with a right continuous filtration
$(\cF_t, t\in [0, T])$ satisfying the usual condition. Let $(B_t , t\in [0, T])$ be the $m$-dimensional fBm of Hurst parameter $H=(H_1, \cdots, H_m)\in (0, 1)^m$. Recall that a  fBm
$(B_t =(B_t^1,  \cdots, B_t^m) , t\in [0, T])$ is a continuous
($m$-dimensional) mean zero  Gaussian process with   covariance given by
\begin{align*}
\EE (B_t^i B_s^j )=\frac{\de_{ij}}2 \left[ t^{2H}+s^{2H}-|t-s|^{2H}\right],\quad \forall \ s,  t\in [0, T] ,
\end{align*}
where $i,j=1,2 \cdots, m$ and $\de_{ij}$ is the Kronecker symbol.
 We shall assume that all the Hurst parameters are greater than $1/2$:    $H_i>1/2, i=1, \cdots, m$. The case $H_i<1/2$ demands  a different handling,  in particular for the central limiting type theorem, and will be considered in our future projects. For a set $A $ we denote   $\mathbf{1}_{A}$ the indicate function such that $\mathbf{1}_{A}(x)=1$ if $x\in A$ and  $\mathbf{1}_{A}(x)=0$ otherwise. Associated with this covariance, we define for $i=1, \cdots, m$,
\[
\langle \mathbf{1}_{(a, b]}\,, \mathbf{1}_{(c,d]}\rangle_{\cH_i}
=\int_a^b \int_c^d \phi_i(u,v) \d u\d v=
\frac{1}{2}\left[ |d-a|^{2H_i}+|c-b|^{2H_i}-|d-b|^{2H_i}-|c-a|^{2H_i}\right],
\]
where $0\le a<b\le T, 0\le c<d\le T$, and
\[
\phi_i(u,v)=H_{i}(2H_{i}-1)|u-v|^{2H_i-2}.
\]
$\langle \cdot,\cdot\rangle_{\cH_i}$ can be extended by (bi-)linearity to a scalar product on the set
$\cS $ of  step  functions of the form
$f(t)=\sum_{i=1}^n f_i\mathbf{1}_{(a_i, b_i]}$, $f_i\in \RR$.
We denote by $\cH_i$ the Hilbert space obtained by  completing  $\cS$ with respect to the above scalar product. Note that $\cH_i$  contains   generalized functions. If $f, g:[0, T] \rightarrow \RR$ are integrable measurable functions,  then
\[
\langle f, g\rangle_{\cH_i}=\int_0^T
\int_0^T f(u) g(v) \phi_i(u,v) \d u\d v.
\]
We denote $\cH=\cH_1\otimes \cdots\otimes \cH_m$ the Hilbert space of the tensor product of $\cH_1, \cdots, \cH_m$.

 For the stochastic analysis associated with the fractional Brownian motion, we refer to
 \cite{biagini2008stochastic, hu2005integral, mishura2008stochastic}
 and we shall use their  notations freely.
For example, we shall use
the It\^o type stochastic integral
$\de^i(f)=\int_0^T f(s) \d B_s^i$  and $\de^i(f\mathbf{1}_{(a, b]})=\int_a^b
f(s) \d B_s^i$ freely.
 But let us   recall some notations and results on the Malliavin calculus for the fractional Brownian motions since some of the results we are going to use cannot
 be found in the standard literature.  We denote by $\cP$ the set of all nonlinear functionals (random variables)  on $(\Om, \cF, \PP)$ of the form
\[
F= f(B_{t_1}, \cdots, B_{t_n}),
\]
where $0\le t_1<\cdots<t_n\le T$, and $f
=f((x_{ij})_{1\le i\le m, \,1\le  j\le n})$  is a smooth function of $n\times m$ variables of  polynomial growth. For $f\in \cP$ of the above form, we define its Malliavin derivative  as
\[
D^{(i )}_r F=\sum_{j=1}^n \frac{\partial }{\partial x_{ij}}  f(B_{t_1}\,, \cdots, B_{t_n})   \mathbf{1}_{[0, t_i] }(r),
\qquad
i=1, \cdots, m.
\]
We consider $D^{(i )}  F =D^{(i )} _\cdot F$  as an  $\cH_i $-valued random variable and consider $D   F=(D^{(1 )}F, \cdots, D^{(m )}F)$ as an $\cH$-valued random variable.  We can also define the higher order derivatives $D^{k}F$   which can be considered
as an $\cH^{\otimes k}$-valued random variable.
For $p>1$ we denote
\begin{align*}
\|F\|_{k, p}^p:=\sum_{\ell=0 }^{k}
\EE \left[ \|D^{\ell}F\|
_{\cH^{\otimes \ell}}^p
\right] ,
\end{align*}
where $\ZZ_+=\{0, 1,2, \cdots\}$ is the set of nonnegative integers
and we denote $\DD^{k, p}$ the completion of  $\cP$ with respect to the above norm.

We recall
\begin{equation}
\EE \left[ F\int_0^T
 g(s) \d B_s^i\right]
=  \EE\left[ \int_0^T \int_0^T
    D_s^i F g(t) \phi_i(s, t) \d s\d t
\right] =\EE
\left[\langle D^iF, g\rangle _{\cH_i}\right] . \label{e.2.1}
\end{equation}

We also recall the following differentiation rule:
\begin{lemma}\label{lem.DZ}
Let $Z$ be an arbitrary random  matrix.
Assume that all   entries of $Z$ are Malliavin differentiable and $Z^{-1}$ exists almost surely.
Then
\begin{eqnarray*}
D Z^{-1}  = -Z^{-1}  ( DZ  ) Z^{-1} .
\end{eqnarray*}
If furthermore, all entries of $Z$ are twice Malliavin differentiable, then
\begin{align*}
D^{2}Z^{-1} 
=2Z^{-1}\cdot DZ \cdot Z^{-1}\cdot DZ  \cdot Z^{-1}
 -Z^{-1} \cdot D^{2}Z  \cdot Z^{-1}.
\end{align*}
\end{lemma}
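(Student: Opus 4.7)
The plan is to derive both identities by differentiating the matrix identity $Z Z^{-1} = \mathrm{Id}$, using the Leibniz (product) rule for the Malliavin derivative applied entrywise to matrix products. No sophisticated input is needed beyond the standard product rule on $\DD^{1,p}$ (resp.\ $\DD^{2,p}$) and elementary matrix algebra.

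For the first identity I would apply $D$ to both sides of $Z Z^{-1} = \mathrm{Id}$. The right-hand side is deterministic, so its derivative vanishes. The product rule on the left then gives
\[
(DZ)\, Z^{-1} + Z\, (DZ^{-1}) = 0,
\]
and multiplying on the left by $Z^{-1}$ yields $DZ^{-1} = -Z^{-1}(DZ)\, Z^{-1}$. The hypotheses that the entries of $Z$ lie in $\DD^{1,p}$ and that $Z^{-1}$ exists almost surely are exactly what is needed to legitimize this step at the level of scalar entries.

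For the second identity I would iterate: apply $D$ once more to the formula just proved. The Leibniz rule produces three terms,
\[
D^{2} Z^{-1} = -(DZ^{-1})(DZ)\, Z^{-1} \;-\; Z^{-1}(D^{2}Z)\, Z^{-1} \;-\; Z^{-1}(DZ)(DZ^{-1}).
\]
Substituting $DZ^{-1} = -Z^{-1}(DZ)\, Z^{-1}$ into the two outer terms produces two equal contributions of the form $Z^{-1}(DZ)\, Z^{-1}(DZ)\, Z^{-1}$, which combine to yield the factor of $2$ in the claimed formula.

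The only subtlety I foresee is notational: $DZ$ takes values in matrices tensored with $\cH$ while $D^{2}Z$ takes values in matrices tensored with $\cH^{\otimes 2}$, so the non-commutativity of matrix multiplication together with the correct placement of the $\cH$-factors must be respected when writing out the Leibniz rule. Once this bookkeeping is in place there is no real obstacle; both identities are purely algebraic consequences of the product rule.
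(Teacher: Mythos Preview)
Your argument is correct and is the standard derivation. The paper does not actually give a proof of this lemma; it simply cites p.~172 of \cite{Hu2017analysis}, where presumably the same product-rule computation appears. Your self-contained treatment (including the remark about keeping track of the $\cH$-slots so that the two first-order terms really do combine into the factor $2$) is entirely adequate.
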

\begin{proof} See p.172 in \cite{Hu2017analysis}.
\end{proof}


\subsection{Elements of Young integrals}

 We   define the H\"older continuous functions:
\begin{definition}
Let $f$ be a  continuous function   on $[0,T]$ and let  $\al \in (0,1)$. The H\"older  semi-norm is defined by
\begin{eqnarray*}
\|f\|_{\al} = \sup_{s,t\in [0,T]} \frac{|f_{t}-f_{s}|}{|t-s|^{\al}}.
\end{eqnarray*}
When $\|f\|_{\al}<\infty$,
 we call  $f$   a  H\"older continuous function of order $\al$.

\end{definition}
We have the  following result on the Young integral (see \cite{young1936inequality}):
\begin{lemma}\label{lem.young}
Let $\al, \be\in (0,1)$ be such that $\al+\be>1$. Suppose that  $f$ and $g$ are H\"older continuous of order $\al$ and    $\be$ on $[0,T]$,  respectively. Then the Young integral  $h_{t} : = \int_{0}^{t} ( f_{s}-f_{0})dg_{s}$ is well-defined for $t\in [0,T]$, and there exists a constant $C$ depending on $\al$ and $\be$ such that the following relation  holds
\begin{eqnarray*}
\|h\|_{\be} \leq  C \|f\|_{\al} \cdot \|g\|_{\be}.
\end{eqnarray*}

\end{lemma}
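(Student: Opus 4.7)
The plan is to realise the Young integral as the limit of left-endpoint Riemann-Stieltjes sums and then extract the H\"older estimate from the classical Young-L\'{o}ve sewing inequality. For each partition $\pi = \{0 = t_0 < t_1 < \cdots < t_n = t\}$ of $[0,t]$, I would set
$$I_\pi := \sum_{i=0}^{n-1} (f_{t_i} - f_0)(g_{t_{i+1}} - g_{t_i}),$$
and show that $I_\pi$ converges as $|\pi|\to 0$ to a limit which is taken as the definition of $h_t$.

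The central estimate is the refinement identity: inserting a point $s\in(t_i,t_{i+1})$ into $\pi$ changes $I_\pi$ by exactly $-(f_s-f_{t_i})(g_{t_{i+1}}-g_s)$, which is bounded in absolute value by $\|f\|_\al\|g\|_\be(t_{i+1}-t_i)^{\al+\be}$. To compare two arbitrary partitions I would use the classical coarsening argument of Young: in any partition of $[0,t]$ with $n$ interior points there exists an index $i$ with $t_{i+1}-t_{i-1}\leq 2t/(n-1)$, so removing that single $t_i$ alters the sum by at most $\|f\|_\al\|g\|_\be\bigl(2t/(n-1)\bigr)^{\al+\be}$. Iterating this point-removal down to the trivial two-point partition and summing the resulting bounds yields a series of the form $C\|f\|_\al\|g\|_\be T^{\al+\be}\sum_{n\geq 2}n^{-(\al+\be)}$, which converges precisely because $\al+\be>1$. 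This provides a uniform Cauchy estimate across partitions, proving existence of $h_t$ and independence from the choice of refining sequence.

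To obtain the H\"older bound, the same reasoning applied on an arbitrary subinterval $[s,t]\subseteq[0,T]$, started from the trivial partition $\{s,t\}$, yields the local sewing estimate
$$\left| h_t - h_s - (f_s - f_0)(g_t - g_s) \right| \leq C(\al,\be)\,\|f\|_\al\,\|g\|_\be\,(t-s)^{\al+\be}.$$
Combining this with the direct bound $|(f_s-f_0)(g_t-g_s)|\leq T^\al\|f\|_\al\|g\|_\be(t-s)^\be$ together with $(t-s)^{\al+\be}\leq T^\al(t-s)^\be$ gives $|h_t-h_s|\leq C'\|f\|_\al\|g\|_\be(t-s)^\be$, which is the claimed semi-norm inequality after taking the supremum in $s,t$.

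The main obstacle is the combinatorial point-removal step: one has to organise the iterative coarsening so that the cumulative refinement errors telescope into a summable series, and the condition $\al+\be>1$ enters critically here through the convergence of $\sum_{n\geq 2}n^{-(\al+\be)}$. Without this summability Young's integral is genuinely undefined, so this is the load-bearing part of the argument; once the combinatorial estimate is secured, existence of the limit and extraction of the H\"older norm bound are then routine.
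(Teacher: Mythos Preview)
Your sketch is correct and is essentially Young's original 1936 argument; the paper does not actually prove this lemma but simply cites \cite{young1936inequality}, so there is nothing to compare against beyond noting that you have reproduced the classical proof the paper defers to. Two minor remarks: the sign in your refinement identity should be $+(f_s-f_{t_i})(g_{t_{i+1}}-g_s)$ rather than minus (harmless, since you bound in absolute value), and to turn the point-removal bound into a genuine Cauchy estimate you should make explicit the passage through a common refinement of two given partitions, applying the coarsening argument on each subinterval of the coarser one; as written, the step ``this provides a uniform Cauchy estimate'' is slightly elliptical, though the standard completion is routine.
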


\section{
Dissipativity  and properties of the  solution }\label{well-posedness}
In this section, we   derive some properties of the solution for   equation \eqref{maineq} under the one-sided Lipschitz condition (see \eqref{eqn.ass1}) and the polynomial growth condition (see \eqref{eqn.ass2}).
The existence and uniqueness of the solution 
 will be proved in  Proposition \ref{prop.ode}.

\subsection{Moment bounds for equation \eqref{maineq}}
We make the following assumptions throughout the remaining  part of the paper.
In the following we denote
\begin{align}
J_b(x)\equiv \partial b
\equiv  ({\partial b_i\over \partial x_j})_{1\le i, j\le m}
\end{align}
the Jacobian of  (the vector field)  $b$, which  is a continuous
function from $\RR^m$ to the set of $m\times m$ matrices.
We also denote $\partial^{2} b(x)  = \Big( \frac{\partial^{2} b_{i}}{\partial x_{j}\partial x_{k}}(x) \Big)_{1\leq i,j,k\leq m}$ and $\partial^{3} b(x)  = \Big( \frac{\partial^{3} b_{i}}{\partial x_{j}\partial x_{k}\partial x_{\ell}}(x) \Big)_{1\leq i,j,k,\ell\leq m}$.
In the following and throughout the paper for  a vector $x \in \RR^m$, we denote $|x|$ the Euclidean norm and for  a  matrix $A$  we define its norm  by
\begin{eqnarray}\label{eqn.Anorm}
|A |=\|A\|_\infty= \sup_{|x|=1}|Ax|\,.
\end{eqnarray}

\begin{assumption} \label{ass-1}
We assume the following.

\begin{enumerate}
\item[({\bf A1})] The coefficient $b$ is one-sided Lipschitz:
namely, there is a constant $\kappa$ such that
\begin{align}\label{eqn.ass1}
\langle x-y, b(x)-b(y)\rangle \le \kappa  |x-y|^{2}, \quad
\forall \ x, y\in \RR^m.
\end{align}
\item[({\bf A2})] The coefficient $b$ itself  and its first and second  derivatives are  of polynomial  growth:
namely, there are constants $\kappa $ and $\mu $ such that
\begin{align}\label{eqn.ass2}
 |b(x)| +|\partial b(x)|+|\partial^{2} b(x)|\le \kappa  (1+|x|^{\mu }),   \quad
\forall \ x \in \RR^m.
\end{align}
\end{enumerate}
\end{assumption}

Under these assumptions  we   obtain the following results.

\begin{prop}\label{p.3.2}
Let $X_t$ be the  solution of equation  \eqref{maineq} and let   Assumption   \ref{ass-1}  be satisfied.
Then for any integer $p\ge 1$, there exists a constant  $C_{\kappa ,\mu ,p,T}$
depending  only on  $\kappa$,   $\mu $, $p$ and $T$ such that
\begin{equation}
\EE \sup_{t\in[0,T]}|X_t|^p \le C_{\kappa, \mu ,p,T}. \label{e.3.4}
\end{equation}
\end{prop}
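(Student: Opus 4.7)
The plan is to exploit the additive structure of the noise by substituting $Y_t = X_t - B_t$, which turns the SPDE into a pathwise ODE $\dot Y_t = b(Y_t + B_t)$ with $Y_0 = x_0$. Since only the drift depends on $Y_t$, we can do all the estimates pathwise and only use the probabilistic structure at the very end through moment bounds on $\sup_{t \in [0,T]} |B_t|$, which are known to be finite for fBm.

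First, I would compute $\frac{d}{dt} |Y_t|^2 = 2 \langle Y_t, b(Y_t + B_t) \rangle$, and then insert $\pm b(B_t)$ to split this as
\[
\frac{d}{dt} |Y_t|^2 = 2 \langle Y_t, b(Y_t + B_t) - b(B_t) \rangle + 2 \langle Y_t, b(B_t) \rangle.
\]
The first term is exactly what the one-sided Lipschitz assumption \eqref{eqn.ass1} is designed to handle: it is bounded by $2\kappa |Y_t|^2$. The second term is controlled by Young's inequality $2 |Y_t| |b(B_t)| \le |Y_t|^2 + |b(B_t)|^2$, yielding the differential inequality
\[
\frac{d}{dt} |Y_t|^2 \le (2\kappa + 1) |Y_t|^2 + |b(B_t)|^2.
\]

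Next, Grönwall's inequality gives the pathwise bound
\[
\sup_{t \in [0,T]} |Y_t|^2 \le e^{(2\kappa+1)T}\Bigl( |x_0|^2 + \int_0^T |b(B_s)|^2 \, ds \Bigr).
\]
Now I would invoke the polynomial growth assumption \eqref{eqn.ass2} to obtain $|b(B_s)|^2 \le 2\kappa^2(1 + |B_s|^{2\mu})$, and then raise to the $p/2$-th power and take expectations. The final ingredient is the standard fact that $\EE \sup_{t \in [0,T]} |B_t|^q < \infty$ for every $q \ge 1$, which follows from the Gaussianity of fBm. Finally, since $X_t = Y_t + B_t$, the bound on $\EE \sup_{t \in [0,T]} |X_t|^p$ follows from the triangle inequality.

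The only real subtlety is the handling of the polynomial growth of $b$; a naive estimate of $\langle Y_t, b(Y_t + B_t) \rangle$ would produce terms like $|Y_t| \cdot |Y_t + B_t|^\mu$ that cannot be closed by Grönwall. The trick of subtracting and adding $b(B_t)$ isolates the one-sided Lipschitz estimate on the $Y_t$-dependent part while moving all superlinear growth onto $|b(B_t)|$, which is purely a function of the noise and can be handled directly by the moment bounds on $\sup_t |B_t|$.
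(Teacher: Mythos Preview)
Your proposal is correct and follows essentially the same argument as the paper: the substitution $Y_t = X_t - B_t$ (the paper calls it $V_t$), the splitting via $\pm b(B_t)$, the application of the one-sided Lipschitz condition to the first term and Young's inequality plus polynomial growth to the second, Gronwall, and the final triangle inequality are all exactly as in the paper's proof. Even the ``subtlety'' you identify---that adding and subtracting $b(B_t)$ is what allows the one-sided Lipschitz bound to absorb the superlinear growth---is the key observation the paper relies on.
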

\begin{proof}
Take $V_t=X_t-B_t $. Then   equation \eqref{maineq} becomes a deterministic equation
with random coefficient
\begin{align*}
\d V_t=b(V_t+B_t )\d t,
\end{align*}
  and can be solved in a pathwise way.
Let us differentiate   $|V_t|^2$  to obtain
\begin{eqnarray}\label{eqn.v2}
\frac{\d}{\d t} |V_t|^2
&=&2\langle V_t,\dot V_t\rangle = 2\langle V_t,b(V_t+B_t )\rangle.
\end{eqnarray}
Equation \eqref{eqn.v2} can be rewritten  as
\begin{eqnarray}\label{eqn.v21}
\frac{\d}{\d t} |V_t|^2
&=&2\langle (V_t+B_t) -B_t ,b(V_t+B_t )- b(B_t ) \rangle
+2\langle V_t,b(B_t ) \rangle.
\end{eqnarray}
Now applying  Assumption \ref{ass-1} (A1)   to the first     inner product in the right side of \eqref{eqn.v21}  and   the elementary inequality $\langle a, b  \rangle \leq \frac12 (|a|^{2}+|b|^{2}) $ and Assumption \ref{ass-1} (A2) to the second inner product, we obtain
\begin{align*}
\frac{\d}{\d t} |V_t|^2
\le&(2\kappa+1)|V_t|^2 +  \kappa ^2(1+ |B_t | ^{2\mu }  ).
\end{align*}
By Gronwall lemma it turns out that
\begin{equation*}
  |V_t|^2
\le e^{(2\kappa+1) t} |V_0|^2+\int_0^t  \kappa ^2 e^{ (2\kappa+1) (t-s)}
(1+  |B_s | ^{2\mu })\d s.
\end{equation*}
Take the square root in both sides and take into account that  $V_0=x_0$.   We obtain
\begin{equation*}
 \sup_{0\le t\le T}  |V_t|
\le e^{ (\kappa+1/2) T} |x_0|
+\sqrt{\int_0^T \kappa^2 e^{(2\kappa+1) (t-s)}
(1+  |B_s | ^{2\mu }  ) \d s}.
\end{equation*}
It then follows that
\begin{equation*}
\EE \sup_{0\le t\le T}  |V_t| ^p \le C_{\kappa, \mu, p,T},
\end{equation*}
for some finite  constant $C_{\kappa,\mu, p,T}$.
Now we use the relation
$X_t=V_t+B_t $ to obtain
\[
\EE \sup_{t\in[0,T]}|X_t |^p
\le 2^p \EE\left(\sup_{0\le t\le T} |V_t|^p+\sup_{0\le t\le T} |B_t|^p\right)
<
\infty.
\]
The proof is   completed.
\end{proof}

The next lemma considers   the H\"{o}lder continuity of  $X$.

\begin{lemma}\label{p.3.3}
Let $X_t$ be the  solution of equation  \eqref{maineq} and let the assumptions be as in Proposition \ref{p.3.2}. Take  $\al$ such that  $ 0<\al <H$.   Then
\begin{eqnarray}\label{eqn.xholder}
\|X\|_{\al} \leq  \kappa(1+\sup_{t\in [0,T]}|X_{t}|^{\mu})  +\|B\|_{\al}
\qquad
\text{and}
\qquad
\EE |X_{t}-X_{s} |^{p} \leq C |t-s|^{Hp},
\qquad p\geq 1 .
\end{eqnarray}

\end{lemma}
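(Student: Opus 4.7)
My plan is to derive both estimates directly from the integral form of \eqref{maineq}, namely
\[
X_t - X_s = \int_s^t b(X_u)\,\d u + (B_t - B_s), \qquad 0\le s\le t\le T,
\]
and then invoke the pathwise/moment bounds already in hand. The only ingredients needed beyond this identity are Assumption \ref{ass-1}(A2) to bound the drift and Proposition \ref{p.3.2} to control the moments of $\sup_{t\le T}|X_t|$.

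For the first (pathwise) inequality, I would take absolute values and use the triangle inequality to get
\[
|X_t - X_s| \le (t-s)\,\sup_{u\in[0,T]}|b(X_u)| + |B_t - B_s|.
\]
Then applying \eqref{eqn.ass2} gives $\sup_{u}|b(X_u)|\le \kappa(1+\sup_{u}|X_u|^{\mu})$. Dividing by $|t-s|^{\al}$ and using that $|t-s|^{1-\al}$ is bounded on $[0,T]$ (absorbing the resulting $T^{1-\al}$ into the constant $\kappa$ as is done elsewhere in the paper) yields
\[
\|X\|_{\al} \le \kappa\bigl(1+\sup_{t\in[0,T]}|X_t|^{\mu}\bigr) + \|B\|_{\al},
\]
which is the first claim.

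For the moment estimate, I would raise $|X_t-X_s|$ to the $p$th power, use the inequality $(a+b)^{p}\le 2^{p-1}(a^{p}+b^{p})$, and take expectation:
\[
\EE|X_t - X_s|^p \le 2^{p-1}\Bigl((t-s)^{p}\,\EE\sup_{u\in[0,T]}|b(X_u)|^{p} + \EE|B_t - B_s|^{p}\Bigr).
\]
For the first term, \eqref{eqn.ass2} gives $|b(X_u)|^p\lesssim 1+|X_u|^{\mu p}$, and Proposition \ref{p.3.2} (applied with exponent $\mu p$) bounds $\EE\sup_{u}|X_u|^{\mu p}$ by a constant depending on $\kappa,\mu,p,T$. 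For the second term, the standard scaling of fBm gives $\EE|B_t-B_s|^{p}=c_{p}|t-s|^{Hp}$. Finally, since $H\le 1$ we have $(t-s)^{p}\le T^{(1-H)p}|t-s|^{Hp}$, which lets me combine both terms into $C|t-s|^{Hp}$.

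There is no serious obstacle here: the argument is a straightforward consequence of the polynomial growth of $b$ and the a priori moment bound from Proposition \ref{p.3.2}. The only minor point to be careful about is ensuring the exponent used in Proposition \ref{p.3.2} is $\mu p$ rather than $p$, so that $\EE\sup_u|X_u|^{\mu p}$ is finite; this is harmless since the proposition is valid for every integer order.
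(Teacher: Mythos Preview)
Your proof is correct and follows exactly the same route as the paper: write the increment $X_t-X_s=\int_s^t b(X_u)\,\d u+(B_t-B_s)$, bound the drift via (A2), and invoke the moment bound \eqref{e.3.4}. The paper's proof is in fact a one-line sketch of precisely the argument you wrote out in detail.
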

\begin{proof}
From the equation \eqref{maineq} satisfied by  $X_t$,  it follows
\begin{align}\label{eqn.xin}
X_t-X_s=\int_s^t b(X_u)\d u+B_t -B_s.
\end{align}
 Applying  (A2) of Assumption \ref{ass-1} and then \eqref{e.3.4} to $b(X_{u})$ and   taking H\"older norm on both sides of \eqref{eqn.xin} we obtain the   relation in~\eqref{eqn.xholder}.
 \end{proof}

\subsection{The Malliavin derivatives  for equation \eqref{maineq}}
In order to  find the   rate of convergence of our numerical schemes in the next section  we  will need   the Malliavin differentiability of equation \eqref{maineq}. To this end we make the following additional assumption on the coefficient.
\begin{assumption}\label{ass-2}
We assume the following.
\begin{enumerate}
\item[({\bf A2'})]
$\partial^{i} b(x) $, $i=0,\dots, 3$ are of polynomial  growth, namely, there are constants $\kappa, \mu  $   such that
\begin{equation}
\max_{i=0,\dots,3} |\partial^{i} b(x)|\le \kappa  (1+|x|^{\mu }),
 \quad \forall \ x \in \RR^m.
\label{eqn.ass21}
\end{equation}
\end{enumerate}
\end{assumption}

Let $r\in [0,T]$.
 Taking the Malliavin derivative $D_r$ on  both sides of equation \eqref{maineq}  leads to
\begin{align}\label{eqn.Dx}
D_{r}X_{t} = \int_{r}^{t} J_{b} (X_{s}) D_{r}X_{s} \d s + \text{Id},
\qquad \text{for}\quad t\in  [r, T],
\end{align}
and $D_{r}X_{t} = 0$ for $t<r$.  It follows from Proposition \ref{prop.ode} that there exists a unique solution for the equation \eqref{eqn.Dx} of $D_{r}X_t$.
  The following lemma is
taken from \cite{hu1996semi}.
\begin{lemma}\label{lem.jb}
The coefficient
$b$  satisfies the one-side Lipschitz
 condition  ({\bf A1})  if
\begin{align*}
\langle x, J_b(y)x\rangle \le \kappa  |x|^2,
\end{align*}
for any $x\in \RR^m$.
\end{lemma}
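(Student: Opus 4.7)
The plan is to reduce the one-sided Lipschitz inequality to a pointwise hypothesis on the Jacobian via the fundamental theorem of calculus applied along the line segment joining $x$ and $y$.

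First, I would fix arbitrary $x, y \in \RR^m$ and introduce the auxiliary path $\gamma(t) = y + t(x-y)$ for $t \in [0,1]$. Since $b$ is $C^{1}$ (indeed smoother, by Assumption \ref{ass-2}), the composition $t \mapsto b(\gamma(t))$ is continuously differentiable, and the chain rule yields
\begin{equation*}
b(x) - b(y) = \int_0^1 \frac{d}{dt} b(\gamma(t))\, dt = \int_0^1 J_b(\gamma(t))\, (x-y)\, dt.
\end{equation*}

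Next, I would take the inner product with $x-y$ and move it inside the integral (it is a constant vector in $t$), giving
\begin{equation*}
\langle x-y,\, b(x) - b(y)\rangle = \int_0^1 \langle x-y,\, J_b(\gamma(t))\,(x-y)\rangle\, dt.
\end{equation*}
Now the pointwise hypothesis $\langle z, J_b(w) z\rangle \le \kappa |z|^2$, applied at $z = x-y$ and $w = \gamma(t)$ for each $t \in [0,1]$, bounds the integrand by $\kappa |x-y|^2$. Integrating over $[0,1]$ yields the desired inequality $\langle x-y, b(x)-b(y)\rangle \le \kappa |x-y|^2$, which is precisely (A1).

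There is essentially no obstacle here: the argument is a one-line consequence of the fundamental theorem of calculus along a straight line, and the only point worth checking is the regularity needed to justify differentiating $b \circ \gamma$, which is guaranteed by the standing smoothness hypothesis on $b$.
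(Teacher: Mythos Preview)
Your argument is correct and is the standard proof of this fact. The paper itself does not supply a proof but simply cites \cite{hu1996semi}, where the same fundamental-theorem-of-calculus computation along the segment $t\mapsto y+t(x-y)$ is used; one minor remark is that only $b\in C^{1}$ is needed, which is already implicit in Assumption~\ref{ass-1} (A2), so the appeal to Assumption~\ref{ass-2} is stronger than necessary.
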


Lemma \ref{lem.jb} can be used to prove the following  bounds for the Malliavin derivatives  of the solution.
\begin{prop}\label{p.3.6}   Let   Assumptions  \ref{ass-1} and \ref{ass-2} be satisfied. Then
for any $p\ge 1$,  there is a constant $C_{\kappa, \mu, T, p}$ such that
\begin{equation}
\EE \left[ \sup_{0\le  r, t\le T}  |D_{r}X_{t} |^{p}\right] \le C_{\kappa, \mu, T, p}<\infty,
\label{e.3.10}
\end{equation}
and
\begin{equation}
\EE \left[ \sup_{0\le r, r',  t\le T} |D_{r  r'}^2X_{t}|^{p}\right] \le C_{\kappa, \mu, T, p}<\infty.
\label{e.3.11}
\end{equation}
\end{prop}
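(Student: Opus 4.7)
The plan is to apply the Malliavin derivative $D_{r}$ (and then $D_{r'}$) to equation \eqref{maineq}, which converts the problem into a linear matrix ODE with random coefficient $J_{b}(X_{t})$ for $DX$, and into an inhomogeneous linear equation of the same type for $D^{2}X$. The bounds will then be established by a dissipativity/energy argument driven by Lemma \ref{lem.jb}, combined with the moment bound in Proposition \ref{p.3.2}.

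For \eqref{e.3.10}, fix $r\in[0,T]$ and view $Y_{t}:=D_{r}X_{t}$ as the $m\times m$ matrix solving $\dot Y_{t}=J_{b}(X_{t})Y_{t}$ on $[r,T]$ with $Y_{r}=\text{Id}$. For each fixed column $j$ and corresponding unit coordinate $\xi=e_{j}$, the vector $z_{t}:=Y_{t}\xi$ satisfies $\dot z_{t}=J_{b}(X_{t})z_{t}$, so
\begin{equation*}
\frac{\d}{\d t}|z_{t}|^{2}=2\langle z_{t},J_{b}(X_{t})z_{t}\rangle \leq 2\kappa|z_{t}|^{2},
\end{equation*}
where the inequality is the Jacobian dissipativity $\langle v,J_{b}(y)v\rangle\leq \kappa|v|^{2}$. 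This bound follows by letting $x\to y$ along the direction $v$ in \eqref{eqn.ass1}, which is licit under the $C^{1}$ regularity of $b$ given by Assumption \ref{ass-2} (so Lemma \ref{lem.jb} applies in both directions). Gronwall's lemma then gives the pathwise bound $|D_{r}X_{t}|\leq C_{m}e^{\kappa T}$ uniformly in $r$, $t$ and $\omega$, which is strictly stronger than \eqref{e.3.10}.

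For \eqref{e.3.11}, by symmetry we may assume $r\leq r'$. Applying $D_{r'}$ to \eqref{eqn.Dx} and using $D_{r'}X_{s}=0$ for $s<r'$ together with the chain rule, we obtain
\begin{equation*}
W_{t}:=D^{2}_{r,r'}X_{t}=\int_{r'}^{t}J_{b}(X_{s})W_{s}\,\d s+\int_{r'}^{t}\partial^{2}b(X_{s})\bigl(D_{r'}X_{s},D_{r}X_{s}\bigr)\d s,\qquad t\geq r'.
\end{equation*}
Contracting against a fixed direction as in the first step and applying the dissipativity together with $2\langle w,f\rangle\leq |w|^{2}+|f|^{2}$, we derive
\begin{equation*}
\frac{\d}{\d t}|W_{t}|^{2}\leq (2\kappa+1)|W_{t}|^{2}+|f_{t}|^{2},\qquad f_{t}:=\partial^{2}b(X_{t})\bigl(D_{r'}X_{t},D_{r}X_{t}\bigr),
\end{equation*}
and Gronwall yields $\sup_{r'\leq t\leq T}|W_{t}|^{2}\leq C_{T}\int_{0}^{T}|f_{s}|^{2}\d s$. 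The deterministic bound from the first step gives $|D_{r}X_{s}|,|D_{r'}X_{s}|\leq C$, while Assumption \ref{ass-2} yields $|\partial^{2}b(X_{s})|\leq \kappa(1+|X_{s}|^{\mu})$. Hence $|f_{s}|\leq C(1+|X_{s}|^{\mu})$ uniformly in $r,r'$, and raising to the $p$-th power and invoking \eqref{e.3.4} with exponent $p\mu$ closes the proof.

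The main obstacle is the unboundedness of $\partial^{2}b$ in the inhomogeneous term for $W_{t}$: a direct Gronwall with the random coefficient $|J_{b}(X_{s})|$, which is only of polynomial growth, would produce an exponential of a non-integrable random variable. The dissipativity from Lemma \ref{lem.jb} is what removes this obstruction by replacing the operator norm of $J_{b}$ by the deterministic constant $\kappa$; in addition, the deterministic bound on $|DX|$ obtained in the first step reduces the forcing from something quadratic in $DX$ to a single polynomial factor in $X$, which is then controlled by Proposition \ref{p.3.2}.
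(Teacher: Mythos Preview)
Your argument is correct and follows essentially the same route as the paper: derive the linear ODE for $D_{r}X_{t}$, use the Jacobian dissipativity $\langle v,J_{b}(y)v\rangle\le\kappa|v|^{2}$ and Gronwall to obtain a deterministic bound, then treat $D^{2}_{r,r'}X_{t}$ via the same energy estimate on the inhomogeneous linear equation, controlling the forcing by the polynomial growth of $\partial^{2}b$ together with the first-step bound on $DX$ and Proposition~\ref{p.3.2}. The only cosmetic difference is that you work column by column whereas the paper applies the inequality directly to the matrix norm.
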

\begin{proof}
Fix $r$ and consider \eqref{eqn.Dx} as an equation of $(U_t:=D_rX_t, \, r\le t\le T)$.  Then
\[
\d U_t=\partial b (X_t) U_t \d t\,,\quad
t\in [r, T]\,,
\]
and $U_r=\text{Id}$.
Differentiating
$|U_t|^2$ yields
\[
\frac{\d}{\d t} |U_t|^2=2\langle U_t,  \partial b (X_t) U_t\rangle
\le 2\kappa |U_t|^2\,.
\]
Thus, we have
\begin{align*}
|D_{r}X_{t}|^{2} \leq C e^{2\kappa (t-r)}\,.
\end{align*}
Since $D_{r}X_{t}=0$ for $r<t$  we have
\begin{equation}
\sup_{0\le r, t\le T} |D_{r}X_{t}|^{2} \leq C e^{2\kappa T}\,.   \label{e.3.11a}
\end{equation}
This proves  \eqref{e.3.10}.  Now we want to prove
\eqref{e.3.11}.
 Differentiating \eqref{eqn.Dx} again, we have  for $t\geq r\vee r'$
 \begin{align}
 \label{second Malliavin Derivative}
D_{rr'}^{2}X_{t} = \int_{r\vee r'}^{t} \partial b (X_{s}) D_{rr'}^{2}X_{s}\d s
+ \int_{r\vee r'}^{t} \langle \partial^{2} b (X_{s}) , D_{r'} X_{s} \otimes  D_{r} X_{s}\rangle \d s,
\end{align}
and $D^{2}_{rr'} X_{t} = 0$ for $t< r\vee r'$,  where $(D^2_{rr'} X_t\,, t\ge  r\vee r')$ is a $m\times m$ matrix-valued process for fixed $r, r'$.
Denote the   second integral process  in
\eqref{second Malliavin Derivative}  by
\begin{align*}
g_{t}: = \int_{r\vee r'}^{t} \langle \partial^{2} b (X_{s}) , D_{r'}X_{s} \rangle D_{r} X_{s}\d s.
\end{align*}
Similar to  the argument to obtain \eqref{e.3.10}, by differentiating
$|D^{2}_{rr'} X_{t}|^{2}$ in $t$, we can show that
\begin{align*}
\sup_{0\le r, r', t\le T}|D^{2}_{rr'} X_{t}|^{2} \leq &  Ce^{2(\kappa +1) T}\,  \cdot \,
\sup_{0\le s,r, r'\le T} \left|\langle \partial^{2} b (X_{s}) , D_{r'}X_{s} \otimes  D_{r} X_{s} \rangle\right|^2.
\end{align*}
From the assumption \eqref{eqn.ass21} and our bound \eqref{e.3.10} for $D_rX_t$ it follows
\begin{align*}
\sup_{0\le r,r',t\le T}|D^{2}_{rr'} X_{t}|^{2} \leq &  C_{\kappa, T}
\sup_{0\le r,r',t\le T}  |X_t|^{2\mu}.
\end{align*}
Now combining the above with the moment bound \eqref{e.3.4} 
we conclude \eqref{e.3.11}.
\end{proof}
\begin{remark} It is interesting to point out  that we have the almost sure uniform bound \eqref{e.3.11a} for  the first Malliavin derivative of the true solution $D_rX_t$.
\end{remark}

\section{Rate of convergence for the backward Euler scheme}\label{BEM}

In this section, we apply the  backward Euler scheme to approximate the solution of  \eqref{maineq}.
The convergence rate of this scheme will also be studied. In the next section we will show that this rate of convergence is optimal.

\subsection{Preparations}
Let
\begin{eqnarray}\label{eqn.pi}
\pi: 0=t_0<t_1<\cdots<t_{n-1}<t_n=T
\end{eqnarray}
  be a partition of the time
interval $[0, T]$.   Denote
\begin{eqnarray}\label{eqn.pib}
\De_k=t_{k+1}-t_k\,,\quad \De
B_k=B_{t_{k+1}}-B_{ t_k} ,
\quad |\pi|=\sup_{0\le k\le  n-1}\De_k.
\end{eqnarray}
The classical backward Euler scheme applied  to \eqref{maineq}  is
\begin{equation}\label{eqn.back.euler}
Y_{t_{k+1}}^\pi =Y_{t_k} ^\pi +b(Y_{t_{k+1}}^\pi) \De_k +\De B_k, \quad k=0, 1, \cdots, n-1\,.
\end{equation}
This is  an implicit scheme.  To find
$Y_{t_{k+1}}^\pi$ from $Y_{t_k}^\pi$,  we need to solve a function  equation:
$Y_{t_{k+1}}^\pi -b(Y_{t_{k+1}}^\pi) \De_k
 =Y_{t_k} ^\pi + \De B_k$.
This scheme gives all the values of the approximation at the partition points
 $t_0, t_1, \cdots, t_n$.
To compare the approximation solution  with the original solution,
we need to
know the value of the approximation solution at all time instant. We shall use the following interpolation:
let $Y_t^\pi$ satisfy
\begin{align}
Y_{t}^\pi=Y_{t_k} ^\pi +b(Y_{t}^\pi) (t-{t_k})+ (B_t - B_{t_k}),
\quad t\in (t_k, t_{k+1}],  \quad k=0, 1, \cdots, n-1. \label{e.4.2}
\end{align}
To  simplify the notation in the remaining part of the paper,   we  omit the explicit dependence of
$Y_t\equiv Y_t^\pi$ on the partition $\pi$.

Before we proceed to study the rate of convergence, let us recall   the following preparatory results; see \cite[Lemma 2.2]{hu1996semi}.

\begin{lemma}  \label{lem.jinvb}
If $J$ is an  $m\times m$ square matrix such that
\begin{align}\label{eqn.lam}
\langle x, Jx\rangle\le \la |x|^2,\quad \forall x\in \RR^m,
\end{align}
then  for all $t\in \RR$ such that  $\la t<1$, $I-tJ$ is invertible and
\begin{align}
\| (I-tJ)^{-1} \|_\infty \le (1-\la t)^{-1}, \label{e.4.4}
\end{align}
where  we recall that $\|\cdot \|_\infty$   means the operator norm for an operator
from $\RR^m$ to $\RR^m$ (see \eqref{eqn.Anorm}).
\end{lemma}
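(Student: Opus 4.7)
The plan is an energy-estimate argument based directly on the dissipativity bound $\langle x, Jx\rangle \le \lambda |x|^{2}$. The two claims (invertibility of $I-tJ$ and the operator-norm bound) come from the same one-line computation, so I would handle them together.

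First I would prove invertibility. Suppose $(I-tJ)x = 0$, i.e.\ $x = tJx$. Taking the inner product with $x$ and using the hypothesis gives $|x|^{2} = t\langle x, Jx\rangle \le \lambda t\,|x|^{2}$. Since $\lambda t < 1$, this forces $|x|=0$, so $I-tJ$ is injective on $\RR^{m}$. By the rank--nullity theorem this implies $I-tJ$ is invertible.

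Next I would prove the norm bound. Fix $x \in \RR^{m}$ and set $y = (I-tJ)^{-1}x$, so that $y - tJy = x$. Taking the inner product with $y$ yields
\begin{equation*}
|y|^{2} - t\langle y, Jy\rangle = \langle y, x\rangle.
\end{equation*}
For the relevant case $t>0$ (which is the only one actually needed in the rest of the paper, since $t$ plays the role of the stepsize $\Delta_{k}$), the assumption $\langle y, Jy\rangle \le \lambda |y|^{2}$ gives $-t\langle y, Jy\rangle \ge -\lambda t\,|y|^{2}$, hence
\begin{equation*}
(1-\lambda t)|y|^{2} \le \langle y, x\rangle \le |y|\,|x|.
\end{equation*}
Dividing by $|y|$ (the case $y=0$ being trivial) produces $|y| \le (1-\lambda t)^{-1}|x|$. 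Taking the supremum over $|x|=1$ and recalling the definition of $\|\cdot\|_{\infty}$ in \eqref{eqn.Anorm} yields \eqref{e.4.4}. The case $t=0$ is trivial, and if one wishes to cover $t<0$ one simply replaces $J$ by $-J$ and uses $\lambda t < 1$ together with the same identity.

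There is no real obstacle here: the argument is a one-line energy estimate, and the only thing to be mildly careful about is the sign of $t$ when converting the dissipativity inequality for $\langle y, Jy\rangle$ into a bound for $-t\langle y,Jy\rangle$. In the application to the backward Euler scheme we always have $t=\Delta_{k}>0$, so the straightforward version is enough.
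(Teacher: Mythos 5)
Your argument is correct for $t>0$ and is essentially the standard proof: the paper itself gives no proof of this lemma (it is quoted from \cite{hu1996semi}, Lemma 2.2), and the proof there is exactly your energy estimate $\langle y,x\rangle=|y|^{2}-t\langle y,Jy\rangle\ge(1-\lambda t)|y|^{2}$ for $y=(I-tJ)^{-1}x$. One caveat: your closing remark about $t<0$ does not work --- replacing $J$ by $-J$ turns the hypothesis into a \emph{lower} bound on $\langle x,Jx\rangle$, which is not assumed, and indeed the statement is false for negative $t$ as written (take $m=1$, $J=-1$, $\lambda=0$, $t=-1$, so that $\lambda t=0<1$ but $I-tJ=0$); since every application in the paper has $t=\Delta_k>0$, this is immaterial, but you should simply restrict to $t\ge 0$ rather than claim the negative case follows.
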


 We also have the following result.
\begin{lemma}\label{corollary.I-tJ}
  Assume that  the coefficient $b$ satisfies the one-sided Lipschitz
condition ({\bf A1}) for some  $\kappa>0$, and    that $b$  is continuously differentiable.     Then when
$t\kappa <1$,   the matrix
$I-t\int_{0}^{1}J_b(x+y\theta) \d \theta  $
 is invertible  for any $x,y\in \RR^m$   and we have

\begin{align}\label{eqn.jb}
\left
\|\left(I-t\int_{0}^{1}J_b(x+y\theta) \d \theta \right)^{-1} \right\|_\infty \le (1-\kappa  t)^{-1}.
\end{align}
\end{lemma}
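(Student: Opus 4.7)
The plan is to reduce this to the matrix-level estimate already provided by Lemma \ref{lem.jinvb}. Set $J := \int_{0}^{1} J_b(x+y\theta)\, d\theta$. Since Lemma \ref{lem.jinvb} gives the desired invertibility and operator norm bound whenever $\langle v, Jv\rangle \le \kappa |v|^2$ for all $v\in\RR^m$, the entire task reduces to verifying this one-sided bound for $J$.

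To verify it, I would first show that the one-sided Lipschitz condition (A1), combined with $C^1$ regularity, implies the pointwise infinitesimal version $\langle v, J_b(z) v\rangle \le \kappa |v|^{2}$ for every $v, z \in \RR^m$. This is the converse direction of Lemma \ref{lem.jb} and follows by a standard directional-derivative argument: applying (A1) to the pair $(z,\, z - sv)$ gives
\begin{equation*}
\Big\langle v,\; \tfrac{b(z)-b(z-sv)}{s} \Big\rangle \;\le\; \kappa |v|^2,
\end{equation*}
and letting $s\downarrow 0$ produces $\langle v, J_b(z) v\rangle \le \kappa|v|^2$ by continuity of $J_b$.

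Once this infinitesimal bound is established, integrating the quadratic form in $\theta$ along the segment from $x$ to $x+y$ immediately yields
\begin{equation*}
\langle v, Jv\rangle \;=\; \int_{0}^{1} \langle v, J_b(x+y\theta) v\rangle\, d\theta \;\le\; \int_{0}^{1} \kappa |v|^{2}\, d\theta \;=\; \kappa |v|^{2}.
\end{equation*}
Applying Lemma \ref{lem.jinvb} with $\lambda = \kappa$ then gives both the invertibility of $I - tJ$ and the estimate \eqref{eqn.jb} whenever $t\kappa < 1$.

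The only genuine content is the passage from the integrated one-sided Lipschitz condition (A1) to the pointwise bound on $J_b$; everything else is a direct convex-combination argument and an invocation of Lemma \ref{lem.jinvb}. I do not anticipate any real obstacle, as the limit argument is elementary under $C^1$ regularity, and the integral over $\theta$ preserves the one-sided quadratic inequality.
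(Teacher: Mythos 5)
Your proposal is correct and follows essentially the same route as the paper: set $J=\int_0^1 J_b(x+y\theta)\,\d\theta$, verify that $J$ satisfies the one-sided bound $\langle v,Jv\rangle\le\kappa|v|^2$, and invoke Lemma \ref{lem.jinvb}. The only difference is that the paper outsources the verification to Lemma 2.1 of \cite{hu1996semi}, whereas you supply the (correct) directional-derivative argument showing that ({\bf A1}) plus $C^1$ regularity yields the pointwise bound on $J_b$, which then integrates over $\theta$.
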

\begin{proof}
Set  $J= \int_{0}^{1}J_b(x+y\theta) \d \theta$. It follows from Lemma 2.1 in \cite{hu1996semi} that    $J$ satisfies   condition \eqref{eqn.lam} with $\lambda=\kappa$. Relation  \eqref{eqn.jb} is then obtained  by applying  Lemma \ref{lem.jinvb} to $J$.
\end{proof}

\subsection{Properties of the approximate solutions}

We  will need   some properties of the  approximated  solution generated by the  backward Euler scheme
\eqref{eqn.back.euler}   for our convergence result.
In this subsection we are concerned  with the upper-bound estimates of the approximate solutions.

\begin{prop}\label{lem.numbound}
 Let   Assumptions  \ref{ass-1} and \ref{ass-2} be satisfied.  Let $Y_t$ be the solution of  the backward Euler scheme defined  by   \eqref{e.4.2}. Then  for any $p\ge 1$, we have
\begin{align}\label{eqn.ynb}
\EE \sup_{0\le  t\le T}|Y_{t }|^p<\infty.
\end{align}
\end{prop}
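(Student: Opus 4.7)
The plan is to imitate the pathwise argument used for the true solution in Proposition~\ref{p.3.2}, adapted to the implicitly defined interpolated scheme \eqref{e.4.2}. Set $V_t = Y_t - B_t$. Plugging $Y_t = V_t + B_t$ into \eqref{e.4.2} and cancelling the Brownian increment gives the clean pathwise identity
\begin{align*}
V_t - V_{t_k} = b(V_t + B_t)(t - t_k), \qquad t\in(t_k,t_{k+1}],
\end{align*}
where (by Lemma~\ref{corollary.I-tJ}) $V_t$ is well defined provided $|\pi|\kappa<1$. Take the inner product with $V_t$ and use the polarization identity $2\langle V_t,V_t-V_{t_k}\rangle = |V_t|^2-|V_{t_k}|^2+|V_t-V_{t_k}|^2$ to get
\begin{align*}
|V_t|^2 - |V_{t_k}|^2 + |V_t-V_{t_k}|^2 = 2(t-t_k)\,\langle V_t, b(V_t+B_t)\rangle .
\end{align*}

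The key step, as in \eqref{eqn.v21}, is to split the right-hand inner product by adding and subtracting $b(B_t)$:
\begin{align*}
\langle V_t, b(V_t+B_t)\rangle = \langle (V_t+B_t)-B_t,\, b(V_t+B_t)-b(B_t)\rangle + \langle V_t, b(B_t)\rangle .
\end{align*}
Applying the one-sided Lipschitz condition (A1) to the first term, and $\langle a,b\rangle\le\tfrac12(|a|^2+|b|^2)$ together with the polynomial growth (A2) to the second, yields
\begin{align*}
\langle V_t, b(V_t+B_t)\rangle \le (\kappa+\tfrac12)|V_t|^2 + C(1+|B_t|^{2\mu}).
\end{align*}
Dropping the nonnegative $|V_t-V_{t_k}|^2$ and assuming $|\pi|$ is small enough so that $1-2|\pi|(\kappa+\tfrac12)\ge \tfrac12$, we can solve for $|V_t|^2$ and obtain
\begin{align*}
|V_t|^2 \le (1+C\Delta_k)|V_{t_k}|^2 + C\Delta_k\bigl(1+\sup_{0\le s\le T}|B_s|^{2\mu}\bigr).
\end{align*}

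Taking $t=t_{k+1}$ and applying the discrete Gronwall inequality along the grid, we get a uniform-in-$k$ bound
\begin{align*}
\max_{0\le k\le n}|V_{t_k}|^2 \le e^{CT}\Bigl(|x_0|^2 + C T\bigl(1+\sup_{s\in[0,T]}|B_s|^{2\mu}\bigr)\Bigr),
\end{align*}
and the same inequality one step back at an arbitrary $t\in(t_k,t_{k+1}]$ extends this to $\sup_{t\in[0,T]}|V_t|^2$. Finally $Y_t=V_t+B_t$ and the Gaussian tails of the fractional Brownian motion give $\mathbb{E}\sup_{s\in[0,T]}|B_s|^q<\infty$ for every $q\ge 1$, which combined with the previous display yields \eqref{eqn.ynb}. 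The main technical issue to double-check is simply that testing against $V_t$ (rather than $V_{t_k}$) is exactly what lets us use (A1) directly, and that the smallness requirement on $|\pi|$ is only a constraint in terms of the fixed constants $\kappa,\mu$, not on the noise path; large $|\pi|$ is handled by first reducing to a finer refinement (e.g.\ a mesh of size $\le 1/(4|\kappa|+2)$), since any partition of $[0,T]$ can be taken to have mesh below a fixed threshold without loss of generality for this qualitative bound.
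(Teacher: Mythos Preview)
Your argument is correct and takes a genuinely different route from the paper's. The paper proceeds via a matrix representation: with $\widetilde Y_{t_k}=Y_{t_k}-B_{t_k}$ it writes the scheme as
\[
\widetilde Y_{t_{k+1}}=\beta_k\bigl(\widetilde Y_{t_k}+b(B_{t_{k+1}})\Delta_k\bigr),
\qquad
\beta_k=\Bigl(I-\Delta_k\int_0^1 J_b(\theta\widetilde Y_{t_{k+1}}+B_{t_{k+1}})\,d\theta\Bigr)^{-1},
\]
iterates to obtain $\widetilde Y_{t_{k+1}}=\sum_{j=0}^k \beta_k\cdots\beta_j\, b(B_{t_{j+1}})\Delta_j+\beta_k\cdots\beta_0\,Y_0$, and then uses Lemma~\ref{corollary.I-tJ} to bound $\|\beta_k\|\le e^{2\kappa\Delta_k}$. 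Your approach instead stays scalar: testing the implicit step against $V_t$ and using the polarization identity lets the one-sided Lipschitz condition (A1) act directly on $\langle V_t,b(V_t+B_t)-b(B_t)\rangle$, leading to a one-step estimate for $|V_t|^2$ and a discrete Gronwall. This is more elementary in that it avoids both the matrix-inverse lemma and the product-of-resolvents bookkeeping; the paper's representation, on the other hand, is in the spirit of the later decompositions (e.g.\ \eqref{eqn.zt}) used for the rate and limit theorems.

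One small remark: your final sentence about handling large $|\pi|$ by ``reducing to a finer refinement'' is off, since $Y^\pi$ is tied to the given partition and cannot be recovered from a refined one. This is harmless, though: the backward Euler step itself is only well defined when $\kappa|\pi|<1$ (see the discussion around Lemma~\ref{corollary.I-tJ}), and the paper's proof carries the same implicit smallness assumption on $|\pi|$. Under that standing hypothesis your discrete Gronwall bound is uniform in $\pi$, matching what the paper obtains.
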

\begin{proof}
We shall show $\EE \sup_{0\le  k\le n}|Y_{t_k }|^p<\infty$  and \eqref{eqn.ynb} can be proved similarly. The   proof  can be considered as a discretized  version of    that of   \eqref{e.3.4}.

Denote $\widetilde{Y}_{t_k}=Y_{t_k}-B_{t_k}$ for $0\le  k \le n $. Then we  can    write  equation   \eqref{eqn.back.euler}
as
\begin{align}\label{eqn.yt}
\widetilde{Y}_{t_{k+1}}=&\widetilde{Y}_{t_k}+\left[ b(\widetilde{Y}_{t_{k+1}}+B_{t_{k+1}})-b(B_{t_{k+1}})\right]\De_k +b(B_{t_{k+1}})\De_k
\nonumber
\\
=&\widetilde{Y}_{t_k}+ \De_k \cdot
\int_{0}^{1}
J_b(\theta\widetilde{Y}_{t_{k+1}}+ B_{t_{k+1}})
\d \theta \cdot
\widetilde{Y}_{t_{k+1}} +b(B_{t_{k+1}})\De_k.
\end{align}
Solving equation \eqref{eqn.yt} for $\widetilde{Y}_{t_{k+1}} $ we obtain
 \begin{align}\label{eqn.yk1}
 \widetilde{Y}_{t_{k+1}} = \beta_k(\widetilde{Y}_{t_k}+b(B_{t_{k+1}})\De_k)\,,
 \end{align}
 where
 $$\beta_k=\lc I-\De_k\cdot \int_{0}^{1}
J_b(\theta\widetilde{Y}_{t_{k+1}}+ B_{t_{k+1}})
\d\theta \rc^{-1}.
$$
Iterating \eqref{eqn.yk1} we obtain
 \begin{align}\label{eqn.yk2}
 \widetilde{Y}_{t_{k+1}} &=
 \sum_{j=0}^{k}
 \beta_k\cdots \beta_{j} b(B_{t_{j+1}} )\De_j +   \beta_k\cdots \beta_{0} Y_{0}.
 \end{align}
Let us apply  Lemma  \ref{corollary.I-tJ} to $\be_{k}$ with $t=\De_k$,
$x=B_{t_{k+1}}$ and $y=\widetilde{Y}_{t_{k+1}}$. This  yields
\begin{eqnarray}\label{eqn.be}
|\beta_k|_\infty \le(1-\kappa \De_k)^{-1} \leq e^{2\kappa \Delta_{k}}.
\end{eqnarray}
 Applying the estimate   \eqref{eqn.be} and   condition (A2) for $b$ to \eqref{eqn.yk2} we obtain
\begin{align}\label{eqn.yk3}
|\widetilde{Y}_{t_{k+1}}|  \le& e^{2\kappa T}  \sum_{j=0}^{k} \sup_{0< j\leq k-1} | b(B_{t_{j}}) | \Delta_{j} +  e^{2\kappa T}| {Y}_{0}|
\nonumber
\\
\le&  e^{2\kappa T}\ T\left(1+\sup_{0< j \le k-1}|B_{t_j}|^{\mu }\right) +  e^{2\kappa T}| {Y}_{0}|.
\end{align}
The estimate \eqref{eqn.ynb} follows by taking sup for $k=0,\dots,n$   and then taking expectation in both sides of \eqref{eqn.yk3}.
\end{proof}

\subsection{Malliavin derivatives of the numerical schemes}
In this subsection we consider
the Malliavin derivatives for  backward Euler scheme under the
one-sided Lipschitz condition.

\begin{prop}\label{lem.dy} Let $b$ satisfy   Assumption
\ref{ass-1} and \ref{ass-2} and let $Y_{t }$ be
defined by \eqref{e.4.2}.   Then   for any $p\ge 1$, there is a constant $C_{p, T}<\infty$  independent of the partition $\pi$ such that
\begin{eqnarray}
\EE \sup_{s,t\in [0,T]}
|D_{s}Y_{t}|^p  \leq C_{p, T}
\qquad\text{and}\qquad
\EE \sup_{r,s,t \in [0,T] }|D_{rs}^2Y_{t} |^p\leq C_{p, T} . \label{e.4.9}
\end{eqnarray}
\end{prop}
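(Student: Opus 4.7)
The strategy is to mimic the continuous-time argument used for Proposition \ref{p.3.6}, but in its discretized implicit form: we derive one-step recursions for $DY$ and $D^2Y$, solve each step by inverting $I-\Delta_k J_b(Y_{t_{k+1}})$ via Lemma \ref{corollary.I-tJ}, and iterate. The one-sided Lipschitz property makes the product of the inverses uniformly bounded in $n$.

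\emph{Step 1: recursion for the first Malliavin derivative.} Fix $r\in (t_{k_0},t_{k_0+1}]$. Applying $D_r$ to \eqref{eqn.back.euler} and using $D_r\Delta B_k=\mathbf{1}_{\{k=k_0\}}\mathrm{Id}$ gives, for $k\ge k_0$,
\begin{align*}
\bigl(I-\Delta_k J_b(Y_{t_{k+1}})\bigr) D_r Y_{t_{k+1}} = D_r Y_{t_k} + \mathbf{1}_{\{k=k_0\}}\mathrm{Id},
\end{align*}
while $D_r Y_{t_k}=0$ for $k\le k_0$ by adaptedness. By Lemma \ref{corollary.I-tJ}, $\|(I-\Delta_k J_b(Y_{t_{k+1}}))^{-1}\|_\infty\le(1-\kappa\Delta_k)^{-1}\le e^{2\kappa\Delta_k}$ once $|\pi|$ is small enough. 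Iterating yields
\begin{align*}
|D_r Y_{t_k}|\le \prod_{j=k_0}^{k-1}(1-\kappa\Delta_j)^{-1}\le e^{2\kappa T},
\end{align*}
a deterministic bound. For the interpolated $Y_t$ with $t\in(t_k,t_{k+1}]$, the analogous identity
$(I-(t-t_k)J_b(Y_t))D_r Y_t=D_r Y_{t_k}+\mathbf{1}_{(t_k,t]}(r)\mathrm{Id}$
gives the same uniform bound, proving the first estimate in \eqref{e.4.9}.

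\emph{Step 2: recursion for the second Malliavin derivative.} Differentiating once more, for $k\ge k_0\vee k_1$,
\begin{align*}
\bigl(I-\Delta_k J_b(Y_{t_{k+1}})\bigr) D^2_{rs} Y_{t_{k+1}} = D^2_{rs} Y_{t_k} + \Delta_k\,\bigl\langle\partial^2 b(Y_{t_{k+1}}),\,D_r Y_{t_{k+1}}\otimes D_s Y_{t_{k+1}}\bigr\rangle,
\end{align*}
while $D^2_{rs}Y_{t_k}=0$ for $k<k_0\vee k_1$. Inverting again and iterating represents $D^2_{rs}Y_{t_k}$ as a telescoping sum; bounding each inverse by $e^{2\kappa\Delta_j}$ as above and using Step~1 together with the polynomial growth bound \eqref{eqn.ass21} on $\partial^{2}b$ gives
\begin{align*}
|D^2_{rs}Y_{t_k}|\le e^{4\kappa T}\sum_{j=k_0\vee k_1}^{k-1}\Delta_j\,\kappa\bigl(1+|Y_{t_{j+1}}|^{\mu}\bigr)\le C_{\kappa,T}\bigl(1+\sup_{0\le \ell\le n}|Y_{t_\ell}|^{\mu}\bigr).
\end{align*}
The interpolated case is handled by the same one-step identity with $\Delta_k$ replaced by $t-t_k$.

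\emph{Step 3: taking expectations.} Raising the bound in Step 2 to the $p$-th power, taking the supremum in $r,s,t$, and invoking the moment bound \eqref{eqn.ynb} from Proposition \ref{lem.numbound} produces a finite constant $C_{p,T}$ independent of $\pi$, yielding the second estimate in \eqref{e.4.9}.

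The only genuine obstacle is the estimate on the product $\prod_j(I-\Delta_j J_b(Y_{t_{j+1}}))^{-1}$; once the one-sided Lipschitz hypothesis is routed through Lemma \ref{lem.jinvb}/Lemma \ref{corollary.I-tJ}, everything else is bookkeeping paralleling the proof of Proposition \ref{p.3.6}.
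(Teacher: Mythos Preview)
Your proposal is correct and follows essentially the same approach as the paper: differentiate the implicit one-step relation, invert each step via Lemma~\ref{corollary.I-tJ}, iterate, and use the one-sided Lipschitz condition to bound the product of inverses by $e^{2\kappa T}$ (a deterministic bound for $D_rY_t$, as the paper also remarks). The only cosmetic difference is in the second-derivative step: the paper first writes $D_rY_t=\alpha_t\alpha_{\eta(t)}\cdots\alpha_{t_{k_0+1}}$ explicitly and then applies the Leibniz rule together with Lemma~\ref{lem.DZ} to differentiate this product, whereas you differentiate the implicit recursion itself and then iterate---both routes invoke the same estimates (Lemma~\ref{corollary.I-tJ}, the polynomial growth of $\partial^2 b$, and Proposition~\ref{lem.numbound}) and arrive at the same bound.
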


\begin{proof}

Take $r\in [0,T]$.
Note that since $Y $ is  adapted to the filtration generated by $B $ we have $D_rY_{t }=0$ for all $r>t$.
In the following we focus on the case when    $r\le t$. Let $k_0$ be  such that $t_{k_0}< r\leq  t_{k_0+1}$.
Take the Malliavin derivative $D=(D^1, \cdots, D^m)$ on both sides of \eqref{e.4.2}. This  yields
\begin{align}\label{eqn.dyk}
D_rY_{t}&=D_{r}Y_{\eta(t)}
+J_b(Y_{t  })D_rY_{t  } (t-{\eta(t)} ) +\text{Id}\cdot \mathbf{1}_{(\eta(t),t]}(r)
 ,
\end{align}
where we denote \begin{eqnarray}\label{eqn.eta}
 \eta(t) = t_{k_{1}}\qquad\text{and}\qquad    k_{1}=\max \{ {i}:   t_i< t \}.
\end{eqnarray}
Solve equations in \eqref{eqn.dyk} for $D_{r}Y_{t}$ we get
\begin{eqnarray}
D_{r}Y_{t} = (  \text{Id} - J_{b}(Y_{t}) (t-\eta(t)) )^{-1}  \cdot ( D_{r}Y_{\eta(t)} + \text{Id}\cdot \mathbf{1}_{(\eta(t),t]}(r) ).
\label{eqn.dy2}
\end{eqnarray}
Iterating \eqref{eqn.dy2}   we  obtain
\begin{align}
 D_rY_t
=
\alpha_{t} \alpha_{\eta(t)}\cdots   \alpha_{t_{k_{0}+1}}   ,
 \label{e.4.10}
\end{align}
where
\begin{align*}
\alpha_{t}=(1-(t-
\eta(t)) J_b(Y_t))^{-1}.
\end{align*}
Now applying  Lemma  \ref{corollary.I-tJ} to $\al_{t}$ with  $y=0$ and $x=Y_{t}$ we get
\begin{align}
 \|D_rY_t \|
\le & \| \alpha_{t }\|  \cdot  \| \alpha_{
\eta(t) }\|
\cdots \|\alpha_{t_{k_{0}+1}}\|   \nonumber \\
\le&(1-\Delta_{k_{1}} \kappa )^{-1}(1-\Delta_{k_{1}-1} \kappa )^{-1}
\cdots(1-\Delta_{k_{0}} \kappa )^{-1}\nonumber \\
\le&e^{2\kappa (\Delta_{k_{1}}+\Delta_{k_{1}-1} +\cdots+\Delta_{k_{0}})} \le e^{2\kappa (t-r)}. \label{e.4.11}
\end{align}
where $\|\cdot\|$ is defined in \eqref{eqn.Anorm}.
This proves  the first bound in
 \eqref{e.4.9}.

We turn to  bound  the  second Malliavin derivative of $Y_t$, namely, to prove the second bound in~\eqref{e.4.9}.
We shall use \eqref{e.4.10} to compute
$D_{rr'}^2Y_t$. For convenience   we consider $D^j_{r'} (D_rY_t)$ for $j=1, \cdots, m$.
By the product rule of the Malliavin derivative operator, we have
\begin{align*}
  D_{r'} ^j (D_rY_t)
 =&D_{r'} ^j \{ \alpha_{t} \alpha_{\eta(t)}\cdots   \alpha_{t_{k_{0}+1}}  \} \\
=& \sum_{k=k_{0}+1}^{k_{1}} \alpha_{t} \alpha_{\eta(t)}\cdots \alpha_{t_{k+1}}
  (D_{r'}^{j}\alpha_{t_k}) \alpha_{t_{k-1}}\cdots\alpha_{t_{k_{0}+1}}\\
  &\qquad
+ (D_{r'} ^j   \alpha_{t }) \alpha_{\eta(t)}
\cdots \alpha_{t_{k_{0}+1}} .
\end{align*}
To compute $D_{r'}^j\al_{t_{k}}$ or
$D_{r'}^j\al_t$  we use Lemma \ref{lem.DZ}.
\begin{align*}
D_{r'}^j\al_{t_{k}}
=
&-(1-\Delta_{k-1} J_b(Y_{t_k}))^{-1}  \left(
D_{r'}^j  (1-\Delta_{k-1} J_b(Y_{t_k})) \right)
(1-\Delta_{k-1} J_b(Y_{t_k}))^{-1} \\
=&-(1-\Delta_{k-1} J_b(Y_{t_k}))^{-1}  \left(-\De_{k-1} \partial^2 b (Y_{t_k}) D_{r'}^j Y_{t_k}\right)
(1-\Delta_{k-1} J_b(Y_{t_k}))^{-1}.
\end{align*}
 Hence,
 \begin{align}
  \|D_{r'} ^j (D_rY_t) \|
\le & \sum_{k=k_0+1}^{k_{1}} \|\alpha_{t } \|
\|\alpha_{\eta(t)}\|  \cdots \|\alpha_{t_{k+1}}\|   \left\| D_{r'}^j\alpha_{t_k} \right\|  \|\alpha_{t_{k-1}}\| \cdots
  \|\alpha_{k_0+1}\|
  \nonumber
   \\
  &\qquad \quad
+ \left\| D_{r'} ^j   \alpha_{t }  \right\|  \|\alpha_{
\eta(t) }\|
\cdots \|\alpha_{k_0+1} \|
\nonumber
\\
\le& e^{2\kappa (t-r) }
 \sum_{k=k_0+1}^{k_{1}}
  \left\| D_{r'}^j\alpha_{t_k} \right\|
+  e^{2\kappa (t-r) } \left\| D_{r'} ^j   \alpha_{t }  \right\|     ,
\label{eqn.ddy}
\end{align}
where in the   last inequality we used   bounds analogous to \eqref{e.4.11}.
By   Assumption \ref{ass-1} and \ref{ass-2} and then \eqref{e.4.11}  we have
\begin{align}
\|D_{r'}^j\al_{t_k}\|
\le &   C  \De_{k-1}  (1+ |Y_{t_k}|^{3\mu})
||D_{r'}^j Y_{t_k}
||
\nonumber
\\
\le &   C  \De_{k-1} (1+ |Y_{t_k}|^{3\mu}).
\label{eqn.dal}
\end{align}
The same  bound holds for $\|D_{r'}^j\al_t\|  $. The proof is   similar to $\|D_{r'}^j\al_{t_{k}}\| $ and is   omitted.

Substituting  \eqref{eqn.dal} into
\eqref{eqn.ddy} we obtain
\begin{align*}
  \|D_{r'} ^j (D_rY_t) \|
\le& C e^{2\kappa (t-r) }
 \sum_{k=k_0+1}^{k_{1}}     \De_{k-1}  (1+ |Y_{t_k}|^{3\mu})   +  Ce^{2\kappa (t-r) }    \De_{k_{1}} (1+ |Y_{t }|^{3\mu})    \\
\le& C  (1+ |Y_{t }|^{3\mu}).
\end{align*}
Finally, applying \eqref{eqn.ynb}   we have
for any $p\ge 1$
\begin{align*}
\EE \sup_{r,r',t}\|D_{r'} ^j (D_rY_t) \|  ^p
\le &   C   \EE\sup_{t}\left(1+ |Y_{t}|^{3p\mu}\right)   <\infty.
\end{align*}
This proves the second inequality in \eqref{e.4.9}.
\end{proof}

\subsection{Estimate   for $\EE \lc
F\Delta B_{uv} ^i
\rc$  and $\EE \lc
F\Delta B_{uv} ^i\Delta B_{st}^j
\rc$}

In this subsection we derive an  upper-bound estimate result for the increments of the fBm $B_t$, which will be useful  in the proof of rate of convergence in Section \ref{subsection.rate}. For convenience let us denote
\begin{equation}
\Delta B_{uv}^i=
B_v^i-B_u^i
\end{equation}
for $0\le u\le v\le T$ and $i=1,\dots, m$.

\begin{lemma} \label{lemma.FDe_B}
Let $F:\Om\to \RR$ be a random variable which possesses  second Malliavin derivative.
If $\EE(|F|)<\infty$ and if $\sup_{s\in[0,T]} \EE |D_s^iF|<\infty$,  then
\begin{eqnarray}
\left|\EE \lc
F\Delta B_{uv} ^i
\rc\right| \leq C\cdot  |u-v|.
\label{e.2.3}
\end{eqnarray}
 Assume further that  $\sup_{s, t\in[0,T]} \EE |D_{s t}^{ij}F|<\infty$. Then  for any $0\le u<v\le T$ and   $0\le s<t\le T$
\begin{eqnarray}
\left| \EE \lc
F\Delta B_{uv} ^i\Delta B_{st}^j
\rc \right|
\leq  C \cdot\langle \mathbf{1}_{[s,t]},  \mathbf{1}_{[u,v]} \rangle + C \cdot |v-u| \cdot |t-s|
.\label{e.2.4}
\end{eqnarray}
\end{lemma}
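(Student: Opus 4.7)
The plan is to reduce both estimates to iterated applications of the Malliavin duality formula \eqref{e.2.1}, relying on the uniform bound
$$\sup_{r\in[0,T]}\int_0^T \phi_i(r,\tau)\,\d\tau \le C_T,$$
which holds because $\int_0^T |r-\tau|^{2H_i-2}\,\d\tau = (2H_i-1)^{-1}\bigl[r^{2H_i-1}+(T-r)^{2H_i-1}\bigr]$ is bounded when $H_i>1/2$. This is the observation that converts an $\cH_i$-pairing with $\mathbf{1}_{[u,v]}$ into a factor of order $|v-u|$.

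For \eqref{e.2.3}, I would write $\Delta B_{uv}^i = \de^i(\mathbf{1}_{[u,v]})$ and apply the duality \eqref{e.2.1} with $g=\mathbf{1}_{[u,v]}$ to get
$$\EE\bigl[F\Delta B_{uv}^i\bigr] = \EE\bigl[\langle D^iF,\mathbf{1}_{[u,v]}\rangle_{\cH_i}\bigr] = \int_0^T\int_u^v \EE[D_r^iF]\,\phi_i(r,\tau)\,\d\tau\,\d r.$$
Fubini together with the uniform bound above then gives $|\EE[F\Delta B_{uv}^i]| \le \sup_{r}\EE|D_r^iF|\cdot\int_u^v \int_0^T\phi_i(r,\tau)\,\d r\,\d\tau \le C(v-u)$, which is the desired estimate.

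For \eqref{e.2.4}, the idea is to peel off the two increments by two successive applications of duality. First, using the product rule $D^j(F\Delta B_{uv}^i)=(D^jF)\Delta B_{uv}^i + \de_{ij}F\mathbf{1}_{[u,v]}$, which in turn relies on $D^j\Delta B_{uv}^i=\de_{ij}\mathbf{1}_{[u,v]}$, duality \eqref{e.2.1} in the $j$-direction decomposes the left-hand side as
$$\de_{ij}\EE[F]\,\langle\mathbf{1}_{[u,v]},\mathbf{1}_{[s,t]}\rangle_{\cH_i} + \EE\bigl[\Delta B_{uv}^i\,\langle D^jF,\mathbf{1}_{[s,t]}\rangle_{\cH_j}\bigr].$$
The first summand is already of the form $C\langle\mathbf{1}_{[s,t]},\mathbf{1}_{[u,v]}\rangle$. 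For the second summand I would apply duality once more in the $i$-direction (treating the inner product $\langle D^jF,\mathbf{1}_{[s,t]}\rangle_{\cH_j}$ as the new random variable, whose Malliavin derivative in the $i$-direction is $\langle D^{ij}_{\rho\cdot}F,\mathbf{1}_{[s,t]}\rangle_{\cH_j}$), obtaining an iterated integral involving $\EE|D^{ij}_{\rho r}F|$ against the product $\phi_i(\rho,\sigma)\phi_j(r,\tau)$. Applying the uniform bound on $\int_0^T\phi\,\d(\cdot)$ twice — once to integrate out the $\sigma,\rho$ pair and once for the $\tau,r$ pair — yields the remaining $C(v-u)(t-s)$ term.

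The argument is essentially bookkeeping of Malliavin duality and Fubini, so the main (mild) obstacle is verifying that the assumed integrability hypotheses $\sup\EE|D^iF|<\infty$ and $\sup\EE|D^{ij}F|<\infty$ genuinely justify the Fubini interchanges and allow the expectation to be pushed inside every integral. This is automatic once one notes that $\phi_i\ge 0$, so after replacing the integrands by their absolute values everything reduces to Tonelli applied to a nonnegative integrand that is finite by the uniform bound above.
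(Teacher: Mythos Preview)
Your proof is correct and follows essentially the same route as the paper: both apply the duality \eqref{e.2.1} once for \eqref{e.2.3} and twice for \eqref{e.2.4}, and both use the key bound $\sup_{r}\int_0^T\phi_i(r,\tau)\,\d\tau\le C_T$ to convert each $\cH_i$-pairing with an indicator into a linear factor $|v-u|$ (resp.\ $|t-s|$). The only cosmetic difference is that the paper writes the double integration by parts in one step, arriving directly at $\EE\bigl[\langle D^{ij}F,\mathbf{1}_{(u,v]\times(s,t]}\rangle_{\cH_i\otimes\cH_j}\bigr]+\de_{ij}\EE[F]\langle\mathbf{1}_{(u,v]},\mathbf{1}_{(s,t]}\rangle_{\cH_i}$, whereas you perform the two dualities sequentially; the resulting estimates are identical.
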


\begin{proof}
Applying \eqref{e.2.1} with $g(t)=\mathbf{1}_{(u, v]}(t)$ yields
\begin{align*}
\left|\EE \lc F\Delta B^{i}_{uv}\rc
\right|
=&\left|
\EE \int_0^T \int_0^T D_s^iF \mathbf{1}_{(u, v]}(t)
\phi_i(s,t)  \d s\d t\right| \\
\le & \sup_{0\le s\le T} \EE |D_s^iF|  \cdot
\int_u^v \int_0^T \phi_i(s,t) \d t \d s .
\end{align*}
Now we have
\begin{equation}
\int_0^T \phi_i(s,t) \d t=\frac{H_i}{2H_i-1}
\left[ (T-s)^{2H_i-1}+s^{2H_i-1}\right]\le CT^{2H_i-1}. \label{e.2.5}
\end{equation}
Thus    \eqref{e.2.3} is proved.

Next, we turn to prove \eqref{e.2.4}.
Applying \eqref{e.2.1} twice we obtain
\begin{align}\label{eqn.fbb}
\left|\EE \lc
F\Delta B_{uv} ^i\Delta B_{st}^j
\rc \right|
& =\left|\EE \left[ \langle D^{ij} F,
\mathbf{1}_{(u,v]\times (s, t]}\rangle _{\cH_i\otimes \cH_{j} }\right] +\EE \left[
F\langle \mathbf{1}_{(u, v]}, \mathbf{1}_{(s, t]}\rangle _{\cH_i}\right] \cdot \mathbf{1}_{\{i=j\}}\right|
\nonumber
\\
&= \Bigg|  \EE \left[  \int_0^T\int_0^T
\int_u^v\int_s^t D^{ij}  _{x \xi}F
\phi_i(x,y)\phi_j(\xi,\zeta) \d x\d y\d\xi \d\zeta\right]
\nonumber
\\
&\qquad\quad +
\EE \left[
F\right]\langle \mathbf{1}_{(u, v]}, \mathbf{1}_{(s, t]}\rangle _{\cH_i}\cdot \mathbf{1}_{\{i=j\}} \Bigg|\\
&\le \EE \sup_{0\le x,\xi\le T} |D^{ij}_{x \xi}
F |\int_0^T\int_0^T
\int_u^v\int_s^t
\phi_i(x,y)\phi_j(\xi,\zeta) \d x\d y\d\xi \d\zeta
\nonumber
\\
&\qquad\quad +
\EE \left|
F\right|\langle \mathbf{1}_{(u, v]}, \mathbf{1}_{(s, t]}\rangle _{\cH_i}   .
\end{align}
As in  \eqref{e.2.5} we have
\begin{eqnarray}\label{eqn.phi2}
\int_0^T\int_0^T
\int_u^v\int_s^t
\phi_i(x,y)\phi_i(\xi,\zeta) \d x\d y\d\xi \d\zeta\le |v-u| \cdot |t-s|
.
\end{eqnarray}
Substituting \eqref{eqn.phi2} into \eqref{eqn.fbb} we obtain the   estimate \eqref{e.2.4}.
\end{proof}

\subsection{Rate of convergence}\label{subsection.rate}
After these preparations, we return to explore the  convergence of the backward Euler scheme, namely  the convergence of $Y_t$ defined by \eqref{e.4.2} to the solution $X_t$ defined by the stochastic differential
equation \eqref{maineq}.  We shall also obtain the rate of this convergence,
which will be shown to be optimal in the next section.

In order to complete the proof, we introduce the following  notations and essential lemmas. Recall that $ t_{k}$, $k=0,\dots, n $ and $\pi$ are defined in \eqref{eqn.pi}-\eqref{eqn.pib} and   $\eta(t)$ is defined in \eqref{eqn.eta}.
We define processes $\Ga$ and $A$ on $[0,T]$:
\begin{align}
\Ga_t= & I-(t-\eta(t))\int_0^1 J_b(uX_t+(1-u)Y_t)\d u, \label{eqn.Gammat} \\
A_t=&\Ga_t^{-1}=\left( I-(t-\eta(t))\int_0^1 J_b(uX_t+(1-u)Y_t)\d u\right) ^{-1}.
\label{eqn.At}
\end{align}
 For convenience we will  also denote  $A_{k+1}=A_{t_{k+1}}$ for $k=0,\dots, n$. Resorting to Lemma \ref{lem.jb} it is easily   seen the existence of $A_t$ when $\kappa  |\pi| <1$. Furthermore, using Lemma \ref{corollary.I-tJ}   has  the following lemma for~$A_t$.
\begin{lemma}\label{At}
Let $A_t$ be defined by  \eqref{eqn.At}.  Then
\begin{align}
\|A_t\|  \le (1-\kappa (t-\eta(t)))^{-1} .  \label{eqn.bound At}
\end{align}
If the partition $\pi$ satisfies  $\kappa  \cdot |\pi| <1 $, then it has
\begin{align}\label{eqn.bound At2}
\|A_t\|   \leq  e^{ 2 \kappa  (t-\eta(t))}.
\end{align}
\end{lemma}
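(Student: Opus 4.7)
The plan is to reduce both bounds to Lemma \ref{corollary.I-tJ} via an affine reparametrization of the integration variable, and then pass from the rational bound $(1-\kappa\delta)^{-1}$ to the exponential bound $e^{2\kappa\delta}$ using a one-line scalar inequality.

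First, I would rewrite the integrand in $\Gamma_t$ as $uX_t+(1-u)Y_t=Y_t+u(X_t-Y_t)$. With the identifications $x=Y_t$, $y=X_t-Y_t$, $\theta=u$, and $t-\eta(t)$ playing the role of $t$ in the statement of Lemma \ref{corollary.I-tJ}, the matrix $\Gamma_t$ in \eqref{eqn.Gammat} is precisely of the form $I-s\int_0^1 J_b(x+\theta y)\,\d\theta$ to which that lemma applies. The one-sided Lipschitz constant $\kappa$ from Assumption (A1) is the same in both statements, so whenever $\kappa(t-\eta(t))<1$ (in particular when $\kappa|\pi|<1$), Lemma \ref{corollary.I-tJ} simultaneously yields the existence of $A_t=\Gamma_t^{-1}$ and the estimate $\|A_t\|\le(1-\kappa(t-\eta(t)))^{-1}$, which is \eqref{eqn.bound At}.

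For the exponential bound \eqref{eqn.bound At2}, I would invoke the scalar inequality $(1-x)^{-1}\le e^{2x}$, valid on $[0,1/2]$. This is easily checked: the function $f(x)=(1-x)e^{2x}$ satisfies $f(0)=1$ and $f'(x)=(1-2x)e^{2x}\ge 0$ on $[0,1/2]$, hence $f(x)\ge 1$ there. Setting $x=\kappa(t-\eta(t))\le\kappa|\pi|$ then converts \eqref{eqn.bound At} into \eqref{eqn.bound At2}, under the tacit understanding that $|\pi|$ is small enough that $\kappa|\pi|\le 1/2$ (the same small-step regime implicit in \eqref{eqn.be}, and harmless in convergence analysis where $|\pi|\to 0$). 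There is no genuine obstacle here: both bounds follow in a few lines from previously established material, and the only point requiring a moment of care is the mild strengthening of the stated hypothesis needed to justify the exponential inequality.
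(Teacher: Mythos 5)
Your proof is correct and follows the same route the paper intends: the lemma is stated immediately after the remark that it is obtained from Lemma \ref{corollary.I-tJ}, and your reparametrization $uX_t+(1-u)Y_t=Y_t+u(X_t-Y_t)$ together with the scalar inequality $(1-x)^{-1}\le e^{2x}$ on $[0,1/2]$ is exactly the intended argument. Your observation that the exponential bound really requires $\kappa|\pi|\le 1/2$ rather than $\kappa|\pi|<1$ is a valid, minor correction; the paper commits the same imprecision in \eqref{eqn.be}.
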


In the following we consider   the Malliavin derivatives of $\Ga$.
\begin{lemma}\label{Malliavin Gamma}
Let $\Ga $ be defined  in
 \eqref{eqn.Gammat}.  Then there exists a constant $C$ independent of $|\pi|$ such that
\begin{align}
\EE\left[
\sup_{ \xi,t\in [0,T]}|D_\xi\Gamma_{t}|^{p}
\right] \le C   |\pi|^{p} \quad \text{and}
\quad
\EE\left[
\sup_{ \xi , \zeta,t\in [0,T] }|D_{\xi\zeta}^2\Gamma_{t}|^{p}\right] \le C |\pi|^{p}. \label{e.4.15}
\end{align}
\end{lemma}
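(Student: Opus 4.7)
The plan is straightforward: differentiate $\Gamma_t$ explicitly, exploit the prefactor $(t-\eta(t))$, which is bounded by $|\pi|$, and then reduce everything to the moment bounds for $X$, $Y$ and their Malliavin derivatives already proved in Propositions~\ref{p.3.2}, \ref{p.3.6}, \ref{lem.numbound} and~\ref{lem.dy}.

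First I would compute the first Malliavin derivative. Since $t-\eta(t)$ is deterministic, the chain rule gives, for each $\xi\in[0,T]$,
\begin{align*}
D_\xi\Gamma_t = -(t-\eta(t))\int_0^1 \partial^2 b(uX_t+(1-u)Y_t)\bigl(u\,D_\xi X_t+(1-u)D_\xi Y_t\bigr)\,du.
\end{align*}
Taking the operator norm, using $|t-\eta(t)|\le |\pi|$, the polynomial growth (A2$'$) of $\partial^2 b$, and the trivial bound $u,1-u\le 1$, one gets
\begin{align*}
|D_\xi\Gamma_t|\le C|\pi|\bigl(1+|X_t|^\mu+|Y_t|^\mu\bigr)\bigl(|D_\xi X_t|+|D_\xi Y_t|\bigr).
\end{align*}
Then I would raise this to the $p$th power, take the supremum in $\xi,t$ and finally the expectation. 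By Cauchy--Schwarz applied at the expectation level, the result is controlled by the product of the high-moment bounds $\EE\sup_t(1+|X_t|+|Y_t|)^{2p\mu}<\infty$ (from Propositions~\ref{p.3.2} and \ref{lem.numbound}) and $\EE\sup_{\xi,t}(|D_\xi X_t|+|D_\xi Y_t|)^{2p}<\infty$ (from Propositions~\ref{p.3.6} and \ref{lem.dy}). The $|\pi|^p$ factor comes out of the whole estimate, giving the first bound in \eqref{e.4.15}.

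For the second Malliavin derivative I would apply $D_\zeta$ to the expression above; again the deterministic prefactor $(t-\eta(t))$ survives. Two kinds of terms appear: one involving $\partial^3 b(uX_t+(1-u)Y_t)$ multiplied by a product of two first Malliavin derivatives, and one involving $\partial^2 b$ multiplied by a second Malliavin derivative $D_{\xi\zeta}^2X_t$ or $D_{\xi\zeta}^2Y_t$. In both cases the polynomial growth assumption (A2$'$) on $\partial^2 b$ and $\partial^3 b$ together with the moment and Malliavin bounds from Propositions~\ref{p.3.2}--\ref{lem.dy} (and another Cauchy--Schwarz) yield the bound $C|\pi|^p$, completing the proof of the second inequality in~\eqref{e.4.15}.

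The only mildly delicate point is ensuring that the suprema in $\xi$ (and $\xi,\zeta$) and in $t$ can be taken inside the expectation, which is why I would rely on the uniform-in-parameters bounds already established for $D_\cdot X$, $D_\cdot Y$, $D^2 X$ and $D^2 Y$; no new estimate is required. The polynomial growth of $\partial^i b$ creates powers of $|X_t|$ and $|Y_t|$, but these are absorbed by Cauchy--Schwarz against the finite high moments of $X$ and $Y$, so no genuine obstacle arises.
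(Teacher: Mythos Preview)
Your proposal is correct and follows essentially the same approach as the paper: differentiate $\Gamma_t$ explicitly, pull out the factor $(t-\eta(t))\le|\pi|$, use the polynomial growth of $\partial^2 b$ and $\partial^3 b$, and reduce to the moment bounds of Propositions~\ref{p.3.2}, \ref{p.3.6}, \ref{lem.numbound}, \ref{lem.dy}. The only cosmetic difference is that the paper separates the product $(1+|X_t|^\mu+|Y_t|^\mu)(|D_\xi X_t|+|D_\xi Y_t|)$ via the elementary inequality $ab\le\tfrac12(a^2+b^2)$ before taking expectation, whereas you keep the product and apply Cauchy--Schwarz at the expectation level; both are equivalent here.
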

\begin{proof}
By  the definition of $\Ga_t$    we have
\begin{align*}
 D_\xi^j \Gamma_{t} =&-(t-\eta(t))\int_{0}^{1} D^{j}_\xi  J_b(uX_t+(1-u)Y_t) \d u\\
=&-(t-\eta(t))\int_{0}^{1} \partial^{2}b(uX_t+(1-u)Y_t)\cdot(uD_\xi^j X_t+(1-u)D_\xi^j Y_t) \d u \, .
\end{align*}
Applying Assumption \ref{ass-1} (A2) and then using the elementary inequality $ab\leq \frac12 (a^{2}+b^{2})$  and taking into account that $t-\eta(t)\leq |\pi|$ we obtain
\begin{align*}
\| D_\xi^j  \Gamma_{t}\|
\le& C (t-\eta(t))
(   1+| X_t|^{2 \mu } + |Y_t|^{ 2\mu } +   |D_\xi^j X_t|^{2}  + |D_\xi^j Y_t|^{2}  ).
\end{align*}
Apply  Propositions \ref{p.3.2},
\ref{p.3.6}, \ref{lem.numbound},  \ref{lem.dy} to $X_{t}$, $D^{j}_{\xi}X_{t}$, $Y_{t}$ and $D_\xi^j Y_t$, respectively. We  obtain
\begin{align*}
\EE\left[
\sup_{\xi,t\in [0,T]}|D_\xi\Gamma_{t}|^{p}\right]
\leq C_p |\pi|^p, \qquad \text{for all } \ p\ge 1.
\end{align*}

This is the first inequality in \eqref{e.4.15}.

Differentiating \eqref{eqn.Gammat} twice gives
\begin{align*}
 D_{\xi\zeta}^{ij}\Gamma_{t}  =&-(t-\eta(t))\int_{0}^{1} \partial^{3}b(uX_t+(1-u)Y_t) \\
 &\quad \cdot   (uD_\xi^i X_t+(1-u)D_\xi^i Y_t) \otimes (uD_\zeta^j X_t+(1-u)D_\zeta^j Y_t) \d u\\
&-(t-\eta(t))\int_{0}^{1} \partial^{2}b(uX_t+(1-u)Y_t)\cdot(uD_{\xi\zeta}^{ij}  X_t+(1-u)D_{\xi\zeta}^{ij}  Y_t) \d u.
\end{align*}
Thus applying   Assumption \ref{ass-2}  on the coefficient $b$ we   have
\begin{align*}
 \|D_{\xi\zeta}^2\Gamma_{t}\|
 \le &C |\pi|\left(   1+|X_t|^\mu +|Y_t|^\mu \right) \\
 &\qquad   \bigg(    (|D_\xi  X_t|  + |D_\xi  Y_t|)
  (|D_\xi  X_t|  + |D_\xi  Y_t|)
   +
 |D_{\xi\zeta}^2 X_t| +|D_{\xi\zeta}^2 Y_t| \bigg)
  \\
  \le &C |\pi|\bigg(   1+|X_t|^{2\mu} +|Y_t|^{2\mu}   +    |D_\xi X_t|^4 + |D_\xi Y_t|^4\\
 &\qquad   +   |D_\zeta X_t|^4 + |D_\zeta Y_t|^4 +
 |D_{\xi\zeta}^2 X_t|^2+ |D_{\xi\zeta}^2 Y_t|^2\bigg).
\end{align*}
  Applying Propositions \ref{p.3.2} ,
\ref{p.3.6}, \ref{lem.numbound},  \ref{lem.dy}  again yields the second inequality of \eqref{e.4.15}. This proves the lemma.
\end{proof}

Let us now turn to the estimate of the derivatives of $A$.
\begin{lemma}\label{Malliavin A}

Let $A_t$ be defined  by  \eqref{eqn.At}.  Then   there exists a constant $C$ independent of $|\pi|$ such that
\begin{align}
\EE\left[
\sup_{\xi,t \in [0,T] }|D_\xi A_{t}|^{p}
\right] \le C   |\pi|^{p} \quad\quad \text{and}
\quad\quad
\EE\left[
\sup_{\xi,\zeta,t\in[0,T]}|D_{\xi\zeta}^2 A_{t}|^{p}\right] \le C  |\pi|^{p}. \label{e.4.16}
\end{align}
\end{lemma}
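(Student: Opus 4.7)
The plan is to apply the differentiation rule for inverses (Lemma \ref{lem.DZ}) to $A_t = \Gamma_t^{-1}$, then control each factor via the already-established bounds on $A_t$ itself (Lemma \ref{At}) and on the Malliavin derivatives of $\Gamma_t$ (Lemma \ref{Malliavin Gamma}).

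First I would note that by Lemma \ref{At}, under the standing assumption $\kappa |\pi|<1$ we have the pointwise bound $\|A_t\|\le e^{2\kappa(t-\eta(t))}\le e^{2\kappa |\pi|}\le e^{2\kappa T}$, which is a deterministic constant independent of the partition. This almost-sure boundedness of $A_t$ is the key feature that makes the estimates routine: all factors of $A$ appearing in the derivative formulas can be pulled out as deterministic constants, and only the derivatives of $\Gamma$ carry the $|\pi|$-dependence.

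Applying Lemma \ref{lem.DZ} with $Z = \Gamma_t$ yields
\begin{align*}
D_\xi A_t = -A_t\,(D_\xi \Gamma_t)\,A_t.
\end{align*}
Taking norms, using $\|A_t\|\le e^{2\kappa T}$, and then the first bound in \eqref{e.4.15} gives
\begin{align*}
\EE\Big[\sup_{\xi,t\in[0,T]}\|D_\xi A_t\|^p\Big]
\le e^{4\kappa T p}\,\EE\Big[\sup_{\xi,t\in[0,T]}\|D_\xi \Gamma_t\|^p\Big]\le C|\pi|^p,
\end{align*}
which is the first inequality in \eqref{e.4.16}.

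For the second inequality, I apply the second identity in Lemma \ref{lem.DZ}:
\begin{align*}
D^2_{\xi\zeta} A_t = 2\,A_t\,(D_\xi\Gamma_t)\,A_t\,(D_\zeta\Gamma_t)\,A_t - A_t\,(D^2_{\xi\zeta}\Gamma_t)\,A_t.
\end{align*}
Again bounding $\|A_t\|$ by $e^{2\kappa T}$, taking $p$-th power, applying the elementary inequality $(a+b)^p\le 2^{p-1}(a^p+b^p)$, and using Cauchy–Schwarz on the cross term $\|D_\xi\Gamma_t\|^p\,\|D_\zeta\Gamma_t\|^p$, I obtain
\begin{align*}
\EE\Big[\sup_{\xi,\zeta,t}\|D^2_{\xi\zeta} A_t\|^p\Big]
\le C\,\Big(\EE\sup\|D_\xi \Gamma_t\|^{2p}\Big)^{1/2}\Big(\EE\sup\|D_\zeta \Gamma_t\|^{2p}\Big)^{1/2}
+ C\,\EE\sup\|D^2_{\xi\zeta}\Gamma_t\|^p.
\end{align*}
Invoking the two bounds in Lemma \ref{Malliavin Gamma} (with exponents $p$ and $2p$), the first term is at most $C|\pi|^{2p}$ and the second is at most $C|\pi|^p$, so both are dominated by $C|\pi|^p$ (since we can assume $|\pi|\le 1$). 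This gives the second inequality in \eqref{e.4.16}, completing the proof. There is no real obstacle; the only point to take care of is to use the \emph{almost sure} bound on $A_t$ from Lemma \ref{At} rather than an $L^p$ bound, so that $A_t$ contributes only a constant and the smallness is entirely supplied by the $|\pi|$-factors in Lemma \ref{Malliavin Gamma}.
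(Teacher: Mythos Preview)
Your proposal is correct and follows essentially the same approach as the paper: apply Lemma \ref{lem.DZ} to $A_t=\Gamma_t^{-1}$, bound the $A_t$ factors deterministically via Lemma \ref{At}, and invoke Lemma \ref{Malliavin Gamma} for the $|\pi|$-smallness. The only cosmetic difference is that the paper writes the second-derivative identity in symmetrized form $A_t\,D_\xi\Gamma_t\,A_t\,D_\zeta\Gamma_t\,A_t + A_t\,D_\zeta\Gamma_t\,A_t\,D_\xi\Gamma_t\,A_t - A_t\,D^2_{\xi\zeta}\Gamma_t\,A_t$ rather than with the coefficient $2$, and it does not spell out the Cauchy--Schwarz step; your version is slightly more detailed but otherwise identical.
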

\begin{proof} Differentiating $A$ and taking into account the relation  $A_{t} = \Ga_{t}^{-1}$ and  Lemma \ref{lem.DZ}, we have
\begin{align*}
D_{\xi}{A_t} &=-A_t \cdot D_{\xi}
\Gamma_t \cdot A_t.
\end{align*}
Applying Lemma \ref{At}-\ref{Malliavin Gamma} for $A$ and $D\Ga$ we thus obtain the first inequality in \eqref{e.4.16}.

Differentiating $A$ again we obtain
\begin{align*}
 D_{\xi \zeta}^{2}{A_t} =&  {A_t}\cdot D_{\xi}\Gamma_t\cdot{A_t}\cdot D_{\zeta}\Gamma_t \cdot{A_t} +
 {A_t}\cdot D_{\zeta}\Gamma_t\cdot{A_t}\cdot D_{\xi}\Gamma_t \cdot{A_t} - {A_t}\cdot D_{\xi \zeta}^{2}\Gamma_t \cdot{A_t}.
\end{align*}
Then  applying Lemma \ref{At}-\ref{Malliavin Gamma}  again we obtain  the
 second inequality    in \eqref{e.4.16}.
\end{proof}

For $s,t\in [0,T]$ and $j=0,\dots, n$ we define
\begin{align}
b_{1}(s, j)=\int_{0}^{1}J_{b}(u X_{t_{j+1}\wedge t}+(1-u)X_{s}) \d u\,. \label{e.4.17}
\end{align}

We will need the following estimate for $b_{1}$. The proof follows  from  a similar procedure as for $\Ga$ in Lemma~\ref{Malliavin Gamma} and is omitted.
\begin{lemma} \label{Malliavin b1}
Let $b_1$ be defined by \eqref{e.4.17}. Then for any $p\ge 1$, we have
\begin{align*}
&\EE
\left[  \sup\limits_{ s \in [0,T] \atop  j=0,\dots,n}|   b_{1}(s,j)|^{p} \right]  <\infty,
\qquad\qquad
\EE
\left[  \sup\limits_{ s,\xi \in [0,T] \atop  j=0,\dots,n}|D_{\xi} b_{1}(s,j)|^{p}  \right]  <\infty,
\\
&\EE
\left[   \sup\limits_{ s,\xi, \zeta \in [0,T] \atop  j=0,\dots,n}|  D^{2}_{\xi \zeta} b_{1}(s,j)|^{p} \right]  <\infty
.
\end{align*}
\end{lemma}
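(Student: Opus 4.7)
The plan is to mirror the argument used for $\Gamma_t$ in Lemma \ref{Malliavin Gamma}, exploiting the fact that $b_1(s,j)$ is an average of $J_b=\partial b$ evaluated at points on the segment between $X_s$ and $X_{t_{j+1}\wedge t}$. Since $b_1$ involves only the exact solution $X$ (no scheme $Y$), the proof is in fact strictly easier than Lemma \ref{Malliavin Gamma}: we only need the moment bounds of Proposition \ref{p.3.2} and the Malliavin bounds of Proposition \ref{p.3.6}, not the discrete counterparts Propositions \ref{lem.numbound} and \ref{lem.dy}.

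For the first inequality, I would simply apply Assumption \ref{ass-1}~(A2) to the integrand to obtain the pointwise bound
\begin{align*}
|b_1(s,j)|\le \int_0^1 \kappa\bigl(1+|uX_{t_{j+1}\wedge t}+(1-u)X_s|^{\mu}\bigr)\,du\le \kappa\bigl(1+2^{\mu}\sup_{t\in[0,T]}|X_t|^{\mu}\bigr),
\end{align*}
which is uniform in $s,j$, and then invoke \eqref{e.3.4} of Proposition \ref{p.3.2} to get arbitrary moments.

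For the second inequality, I would differentiate under the integral sign and apply the chain rule: for $k=1,\dots,m$,
\begin{align*}
D_\xi^{k} b_1(s,j)=\int_0^1 \partial^{2}b\bigl(uX_{t_{j+1}\wedge t}+(1-u)X_s\bigr)\cdot\bigl(uD_\xi^{k}X_{t_{j+1}\wedge t}+(1-u)D_\xi^{k}X_s\bigr)\,du.
\end{align*}
Using Assumption \ref{ass-2} for $\partial^{2}b$ together with the inequality $ab\le \tfrac12(a^{2}+b^{2})$ to decouple the two factors, one gets a pointwise bound in terms of $\sup_t|X_t|^{2\mu}$ and $\sup_{\xi,t}|D_\xi X_t|^{2}$. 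Taking $p$-th moments and invoking Propositions \ref{p.3.2} and \ref{p.3.6} then yields the claim uniformly in $s,\xi,j$.

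For the third inequality, one further differentiation via the product rule yields two groups of terms:
\begin{align*}
D^{2}_{\xi\zeta}b_1(s,j)&=\int_0^1\partial^{3}b(\cdots)\cdot\bigl(uD_\xi X_{t_{j+1}\wedge t}+(1-u)D_\xi X_s\bigr)\otimes\bigl(uD_\zeta X_{t_{j+1}\wedge t}+(1-u)D_\zeta X_s\bigr)du\\
&\quad +\int_0^1\partial^{2}b(\cdots)\cdot\bigl(uD^{2}_{\xi\zeta} X_{t_{j+1}\wedge t}+(1-u)D^{2}_{\xi\zeta}X_s\bigr)du.
\end{align*}
Applying Assumption \ref{ass-2} to bound $\partial^{3}b$ and $\partial^{2}b$ by polynomials in $|X|$, separating products by Young's inequality as in Lemma \ref{Malliavin Gamma}, and invoking Propositions \ref{p.3.2} and \ref{p.3.6} to control $\sup_t|X_t|$, $\sup_{\xi,t}|D_\xi X_t|$ and $\sup_{\xi,\zeta,t}|D^{2}_{\xi\zeta}X_t|$ in every $L^{p}$, one gets a finite $p$-th moment bound uniform in $s,\xi,\zeta$ and in the index~$j$.

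There is no real obstacle here: the only nonroutine ingredient is the uniformity in $j$, which is immediate because every upper bound we produce depends on $X$ and its Malliavin derivatives only through suprema already controlled uniformly on $[0,T]$.
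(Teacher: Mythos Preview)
Your proposal is correct and follows exactly the route the paper indicates: the paper omits the proof, stating only that it ``follows from a similar procedure as for $\Gamma$ in Lemma~\ref{Malliavin Gamma},'' and that is precisely what you carry out. Your observation that the argument is in fact strictly easier than Lemma~\ref{Malliavin Gamma} (since only $X$ appears, so Propositions~\ref{lem.numbound} and~\ref{lem.dy} are not needed) is a valid simplification.
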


Now we can state one of the main results in this paper.

\begin{theorem}
Assume that Assumption \ref{ass-1} and \ref{ass-2} hold. Let $X_t$ satisfy
\eqref{maineq} and let $Y_t$ satisfy~\eqref{e.4.2}.
There is a constant  $C$  independent of  the partition $\pi$ (but
dependent on $ \kappa $, $\mu$,  $T$) such that
\begin{align}\label{eqn.errorT}
\EE\left[ \sup_{0\le t\le T} |Y_t-X_t|^2\right] \le C |\pi|^2,
\end{align}
and
\begin{align}\label{eqn.errort}
\int_0^T \EE |Y_t-X_t|^2\text{d} t \le C |\pi|^2.
\end{align}
\end{theorem}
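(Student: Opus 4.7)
The plan is to derive an explicit recursive decomposition of the error $e_t:=Y_t-X_t$ at the mesh points in terms of $A_t$ (see \eqref{eqn.At}), then to bound the resulting sum pathwise for its drift piece and via Malliavin integration by parts for its fBm piece. Subtracting \eqref{maineq} from \eqref{e.4.2} on $(t_k,t_{k+1}]$ and using the mean-value identities $b(Y_t)-b(X_t)=\widetilde J_t e_t$ (with $\widetilde J_t:=\int_0^1 J_b(uX_t+(1-u)Y_t)\,du$) and $b(X_t)-b(X_s)=b_1(s,k)(X_t-X_s)$ gives $\Gamma_t e_t = e_{t_k}+\int_{t_k}^t b_1(s,k)(X_t-X_s)\,ds$, whence
\begin{align*}
e_t = A_t e_{t_k}+A_t\int_{t_k}^t b_1(s,k)(X_t-X_s)\,ds.
\end{align*}
Iterating at the mesh times with $\Pi_{j+1,k}:=A_{t_k}A_{t_{k-1}}\cdots A_{t_{j+1}}$ gives $e_{t_k}=\sum_{j=0}^{k-1}\Pi_{j+1,k}R_j$ with local error $R_j:=\int_{t_j}^{t_{j+1}}b_1(s,j)(X_{t_{j+1}}-X_s)\,ds$. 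Substituting $X_{t_{j+1}}-X_s=\int_s^{t_{j+1}}b(X_u)\,du+(B_{t_{j+1}}-B_s)$ splits $R_j=R_j^{\mathrm{det}}+R_j^{\mathrm{sto}}$ and hence $e_{t_k}=U_k+V_k$.

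The drift sum $U_k:=\sum_j\Pi_{j+1,k}R_j^{\mathrm{det}}$ is handled pathwise: Lemma \ref{At} gives $\|\Pi_{j+1,k}\|\le e^{2\kappa T}$, while Assumption \ref{ass-1}(A2) together with Proposition \ref{p.3.2} yields $|R_j^{\mathrm{det}}|\le C|\pi|^2 W$ for some $W$ with finite moments of all orders. Summing across the $n=T/|\pi|$ intervals produces $|U_k|\le CT|\pi|W$, hence $\EE\sup_k |U_k|^2\le C|\pi|^2$.

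The central technical step is the estimate $\EE|V_k|^2\le C|\pi|^2$, since a naive pathwise bound $|R_j^{\mathrm{sto}}|\le CW|\pi|^{1+H-\varepsilon}$ yields only the suboptimal rate $O(|\pi|^{2H})$. Expanding,
\begin{align*}
\EE|V_k|^2 = \sum_{i,j}\sum_{\alpha,\beta}\int_{t_i}^{t_{i+1}}\!\!\int_{t_j}^{t_{j+1}}\EE\bigl[F_{ij}^{\alpha\beta}(r,s)(B^\alpha_{t_{i+1}}-B^\alpha_r)(B^\beta_{t_{j+1}}-B^\beta_s)\bigr]\,dr\,ds,
\end{align*}
where $F_{ij}^{\alpha\beta}(r,s)$ is an entry-wise product of components of $\Pi_{i+1,k}$, $\Pi_{j+1,k}$, $b_1(r,i)$, and $b_1(s,j)$. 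Lemmas \ref{Malliavin Gamma}--\ref{Malliavin b1} combined with the Leibniz rule supply uniform-in-$\pi$ bounds on $\EE\|D^{(\ell)}F_{ij}^{\alpha\beta}\|$ for $\ell=0,1,2$, so Lemma \ref{lemma.FDe_B} gives the per-summand bound $C(\langle\mathbf 1_{(r,t_{i+1}]},\mathbf 1_{(s,t_{j+1}]}\rangle_{\cH_\alpha}\mathbf 1_{\alpha=\beta}+|\pi|^2)$. The $|\pi|^2$ contribution sums to $C(n|\pi|^2)^2=CT^2|\pi|^2$; for the covariance contribution, swapping the order of integration extracts factors $(u-t_i)\le|\pi|$ and $(v-t_j)\le|\pi|$ from the inner integrals, reducing the sum to $|\pi|^2\int_0^T\!\int_0^T\phi_\alpha(u,v)\,du\,dv\le CT^{2H_\alpha}|\pi|^2$. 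This is the heart of the proof.

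Combining these gives $\sup_k\EE|e_{t_k}|^2\le C|\pi|^2$; extending to general $t$ through the one-step representation above (its remainder being pathwise $O(|\pi|^{1+H-\varepsilon})$) yields $\sup_t\EE|e_t|^2\le C|\pi|^2$ and thus \eqref{eqn.errort}. For the supremum bound \eqref{eqn.errorT}, the pathwise interpolation $\sup_{t\in[t_k,t_{k+1}]}|e_t|\le e^{2\kappa|\pi|}(|e_{t_k}|+CW|\pi|^{1+H-\varepsilon})$ reduces the task to $\EE\max_k|e_{t_k}|^2\le C|\pi|^2$. The main obstacle here is that a naive union bound over the $n=T/|\pi|$ mesh points loses an extra factor of $n$; I would resolve this by running the Malliavin argument at higher moments $\EE|V_k|^{2p}\le C_p|\pi|^{2p}$ and then invoking a Kolmogorov-type continuity argument on the discrete process $k\mapsto V_k$, exploiting that $V_{k+1}-V_k$ has pathwise size $O(|\pi|^{1+H-\varepsilon})$ which is smaller than $|\pi|^{1}$ in an averaged sense.
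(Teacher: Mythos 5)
Your proposal follows essentially the same route as the paper's proof: the same resolvent decomposition of the error into a sum $\sum_j A_j^t R_j^t$ built from products of $(\mathrm{Id}-\Delta J_b)^{-1}$ controlled by the one-sided Lipschitz bound of Lemma \ref{corollary.I-tJ}, the same splitting of the local error $R_j$ into a drift piece and an fBm increment, and the same key step of applying the Malliavin duality estimate of Lemma \ref{lemma.FDe_B} to the double sum and then summing the covariance kernel $\phi$ after extracting the factors $(u-t_i)(v-t_j)\le|\pi|^2$. Two remarks. First, your splitting $e_{t_k}=U_k+V_k$ with a purely pathwise bound on the drift sum dispenses with the cross terms (the paper's $I_2,I_3$, which it estimates via first-order Malliavin derivatives); this is a small but genuine simplification. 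Second, for the supremum bound \eqref{eqn.errorT} you correctly flag that a union bound over the $n$ mesh points loses a factor of $n$ --- an issue the paper's proof does not address at all, since its Step 7 only establishes the pointwise bound $\EE|Z_t|^2\le C|\pi|^2$. Be aware, though, that your proposed remedy $\EE|V_k|^{2p}\le C_p|\pi|^{2p}$ is not obtainable from Lemma \ref{lemma.FDe_B} as stated: it requires a generalization to expectations of $F$ times products of $2p$ fBm increments, controlled by Malliavin derivatives up to order $2p$, which is a nontrivial extension you would need to carry out (the Kolmogorov-type argument on $k\mapsto e_{t_k}$ is then standard). With that extension supplied, your argument is complete and in fact proves slightly more than the paper's written proof does.
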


\begin{proof}  We will first prove
 \eqref{eqn.errorT}. The second statement  \eqref{eqn.errort} follows from \eqref{eqn.errorT} easily. The proof is divided into several steps.

 \medskip
 \noindent\emph{Step 1: A representation for the error process.} We can write  \eqref{maineq} as
\begin{align}
X_{t}=&X_{t_k}+\int_{t_k}^t b(X_s)\textrm{d}s
       +(B_t - B_{t_k})\nonumber\\
    =&   X_{t_k}+b(X_t)(t-t_k)+(B_t -B_{t_{k}})+ R_k(t)\,,  \label{eqn.sol2}
\end{align}
where $t_k\le   t\le t_{k+1}$, and
\begin{eqnarray}\label{eqn.rkt}
R_k(t)=\int_{t_k}^t b(X_s)-b(X_t)\d s.
\end{eqnarray}
For ease of notation, we denote
$R_k:= R_k(t_{k+1})$ and $R_{k}^{t} := R_{k} (t_{k+1}\wedge t) $.

Denote the error process $Z_t=X_t-Y_t$. For convenience   we will also  denote
$Z_k=Z_{t_k}$.
Substracting
  \eqref{e.4.2} from \eqref{eqn.sol2},  we get for $t_k\le  t\le  t_{k+1}$
$$
Z_t=Z_k+(t-t_k)\{ b(X_t)-b(Y_t)\}+ R_k(t).
$$
We can rewrite the above formula as
$$
Z_t-(t-t_k)\{ b(X_t)-b(Y_t)\}=Z_k+ R_k(t).
$$
That is
\begin{eqnarray}
\label{eqn.zk}
Z_t-(t-t_k)\lcl \int_0^1 J_b(uX_t+(1-u)Y_t)\d u \rcl Z_t=Z_k+R_k(t).
\end{eqnarray}

Recall that $A$ is defined in \eqref{eqn.At} and we denote $A_{k}=A_{t_{k}}$. Therefore, multiplying $A_{t}$ on both sides of \eqref{eqn.zk} we get
$$
Z_t=A_t Z_k+A_t R_k(t).
$$
Taking $t=t_{k+1}$ particularly  arrives at
\begin{eqnarray}
\label{eqn.ztk}
Z_{k+1}=A_{k+1}Z_k+A_{k+1}R_k.
\end{eqnarray}
Denote
\begin{eqnarray}\label{eqn.atj}
A_j^t=A_{t} \lc \prod_{\ell=j+1}^{ {k}} A_{\ell} \rc,
\end{eqnarray}
where $ {k}$ is such that  $ t_k<t\le t_{k+1} $ and here  we use the convention that  if $\{\ell:j+1\le \ell \le  {k} \}=\emptyset$ then $\prod_{\ell=j+1}^{ {k}} A_{\ell}=1$. By
iterating \eqref{eqn.atj} and then using \eqref{eqn.ztk} and $R_{k}^{t} := R_{k} (t_{k+1}\wedge t) $   we obtain
\begin{eqnarray}\label{eqn.zt}
Z_{t} = \sum_{j=0}^{k} A_{j}^{t} R_{j}^{t} .
\end{eqnarray}

\smallskip
 \noindent\emph{Step 2: Estimate of $A^{t}_j$.}
 Due to \eqref{eqn.bound At},  we have
\begin{align}\label{eqn.Ajt}
\|A_j^t\|
&\le  (1-\kappa  (t-t_{ {k}}))^{-1} \cdots (1-  \kappa \De_{j+1})^{-1} (1-\kappa  \De_{j})^{-1}\nonumber\\
&\le e^{2\kappa (t-t_j)}\le C.
\end{align}
This can be used  to obtain some rate of convergence
as in \cite{hu1996semi}. However, this rate will not be optimal. To obtain the optimal rate estimate, we
need to bound the Malliavin derivative of $A_j^t$ as
in \cite{hu2016rate}.
A straightforward computation for  $D_{\xi} A_j^t$ and $D_{\xi\zeta}^2 A_j^t$  yields
\begin{align*}
D_{\xi} A_j^t=&D_{\xi}A_{t} \lc \prod_{\ell=j+1}^{ {k}}  A_{\ell} \rc + \sum_{\ell=j+1}^{ {k}}A_{t} \lc A_{j+1}\cdots A_{\ell-1}\cdot D_{\xi}A_{\ell}\cdot A_{\ell+1}\cdots A_{ {k}} \rc;
\\
D_{\xi\zeta}^2A_j^t=&D_{\xi \zeta}^2A_{t} \lc \prod_{\ell=j+1}^{ {k}}  A_{\ell} \rc  +\sum_{\ell=j+1}^{ {k}}A_{t} \lc A_{j+1}\cdots A_{\ell-1}\cdot D_{\xi \zeta}^2A_{\ell}\cdot A_{\ell+1}\cdots A_{ {k}}\rc \\
& \qquad +\sum_{\ell=j+1}^{ {k}} D_{\xi}A_{t} \lc A_{j+1}\cdots A_{\ell-1}\cdot D_{\zeta}A_{\ell}\cdot A_{\ell+1}\cdots A_{ {k}}\rc \\
& \qquad +\sum_{\ell=j+1}^{ {k}} D_{\zeta}A_{t} \lc A_{j+1}\cdots A_{\ell-1}\cdot D_{\xi}A_{\ell}\cdot A_{\ell+1}\cdots A_{ {k}}\rc \\
&\qquad +\sum_{\ell\neq\ell', \ell,\ell'=j+1}^{ {k}}A_{t} \lc A_{j+1}\cdots  D_{\xi}A_{\ell}\cdots D_{\zeta}A_{\ell'}\cdots A_{ {k}} \rc.
\end{align*}
These two formulas combined with \eqref{eqn.bound At2}, \eqref{e.4.16} and
\eqref{eqn.Ajt} yield
\begin{eqnarray}
\EE \left[ \sup_{0\le \xi, t\le T,
j=0, 1, \dots, n}|D_{\xi} A_j^t|^p\right] \le C_p<\infty; \label{e.4.25}\\
\EE \left[ \sup_{0\le \xi, \zeta, t\le T,
j=0, 1, \dots, n}|D_{\xi \zeta}^2 A_j^t|^p\right] \le C_p<\infty  \label{e.4.26}
\end{eqnarray}
for any $p\ge 1$.

\medskip
 \noindent\emph{Step 3: A decomposition for the error process.}
Now we come back to the main proof.
We write  \eqref{eqn.zt} as
\begin{align*}
Z_{t}=\sum_{j=0}^{ {k}} A_{j}^{t} R_{j}^{t}\,.
\end{align*}
Thus, we have
\begin{align}
\EE(|Z_{t}|^2)= &\EE \lc \Big|\sum_{j=0}^{ {k}} A_{j}^{t} R_{j}^{t}\Big|^2 \rc\nonumber\\
=&\sum_{j,j'=0}^{ {k}}\EE\left\{A_{j}^{t}R_{j}^{t}\cdot A_{j'}^{t} R_{j'}^{t}\right\}
=:\sum_{j,j'=0}^{ {k}} I_{j, j'}.
\end{align}
It is worthy to  notice from \eqref{eqn.rkt} that
\begin{align*}
R_j^t=&\int_{t_{j}}^{t_{j+1}\wedge t} \lcl\int_{0}^{1}J_{b}(u X_{t_{j+1}\wedge t}+(1-u)X_{s})\d u \rcl \Delta X_{s,t_{j+1}\wedge t}\d s\\
=&\int_{t_{j}}^{t_{j+1}\wedge t}b_{1}(s, j)\Delta X_{s,t_{j+1}\wedge t}\d s,
\end{align*}
where $\Delta X_{s,r}=X_r-X_s$ and recall that $b_{1}$ is defined by  \eqref{e.4.17}.
So we can write
\begin{align}
 I_{j, j'} =&\int_{t_{j}}^{t_{j+1}\wedge t}\int_{t_{j'}}^{t_{j'+1}\wedge t}
 \EE\bigg( A_{j}^{t}b_{1}(s, j)
 \Delta X_{s,t_{j+1}\wedge t} \cdot A_{j'}^{t} b_{1}(s', j')\Delta X_{s', t_{j'+1}\wedge t}\bigg)  \d s'\d s\,.  \label{eqn.ARAR}
\end{align}
Since
\begin{equation*}
\Delta X_{s, t_{j+1}\wedge t}=X_{t_{j+1}\wedge t}- X_{s}=\int_s^{t_{j+1}\wedge t} b(X_u)\d u+\Delta B_{s,t_{j+1}\wedge t},
\end{equation*}
we can rewrite   \eqref{eqn.ARAR}   as    follows
\begin{align} \label{eqn.exact ARAR}
 I_{j, j'} =
&\int_{t_{j}}^{t_{j+1}\wedge t}\int_{t_{j'}}^{t_{j'+1}\wedge t}
 \EE\left[ A_{j}^{t}b_{1}(s, j)\lc \int_s^{t_{j+1}\wedge t} b(X_u)\d u+\Delta B_{s,t_{j+1}\wedge t}  \rc  \right.\nonumber  \\
&\qquad\qquad \left.\cdot A_{j'}^{t} b_{1}(s', j')\lc \int_{s'}^{t_{j'+1}\wedge t}  b(X_v)\d v+\Delta B_{s', t_{j'+1}\wedge t}  \rc \right] \d s'\d s \nonumber\\
=& I_1+I_2+I_3+I_4,
\end{align}
where
\begin{align}
I_1:=& \int_{t_{j}}^{t_{j+1}\wedge t}\int_{t_{j'}}^{t_{j'+1}\wedge t}\int_s^{t_{j+1}\wedge t}\int_{s'}^{t_{j'+1}\wedge t}
 \EE \bigg( A_{j}^{t}b_{1}(s,j)b(X_u)\nonumber\\
 &\qquad \cdot A_{j'}^{t}b_{1}(s',j')b(X_v)\bigg) \d u\d v\d s'\d s ; \nonumber\\
I_2:=& \int_{t_{j}}^{t_{j+1}\wedge t}\int_{t_{j'}}^{t_{j'+1}\wedge t}\int_s^{t_{j+1}\wedge t}
 \EE\left[ A_{j}^{t}b_{1}(s, j) b(X_u)\cdot A_{j'}^{t} b_{1}(s', j')\Delta B_{s', t_{j'+1}\wedge t}  \right] \d u \d s'\d s; \nonumber \\
I_3:=& \int_{t_{j}}^{t_{j+1}\wedge t}\int_{t_{j'}}^{t_{j'+1}\wedge t}\int_{s'}^{t_{j'+1}\wedge t}\EE\left[ A_{j}^{t}b_{1}(s,j)\Delta B_{s,t_{j+1}\wedge t}  \cdot A_{j'}^{t} b_{1}(s', j') b(X_v)\right] \d v \d s'\d s ; \nonumber\\
I_4: =&
 \int_{t_{j}}^{t_{j+1}\wedge t}\int_{t_{j'}}^{t_{j'+1}\wedge t}
 \EE\lc A_{j}^{t}b_{1}(s,j)\Delta B_{s,t_{j+1}\wedge t}  \cdot A_{j'}^{t}b_{1}(s', j')\Delta B_{s',t_{j'+1}\wedge t}  \rc \d s'\d s . \nonumber
\end{align}
In the above   we have omitted the dependence of   $I_1, I_2, I_3, I_4$   on $j, j'$ for  the sake of simplicity.

\medskip
 \noindent\emph{Step 4: Estimate of $I_{1}$.}
In the following we derive the upper-bound for $I_{1}$,$\cdots$, $I_{4}$.
First, let us consider $I_1$. From the polynomial  growth assumptions on $b$, Proposition \ref{p.3.2}, \eqref{eqn.Ajt}, and Lemma \ref{Malliavin b1}, we obtain that
\[
\EE \bigg| A_{j}^{t}b_{1}(s,j)b(X_u)
  \cdot A_{j'}^{t}b_{1}(s',j')b(X_v)\bigg|\le C<\infty\,.
\]
Thus
\begin{align}\label{eqn.I1}
|I_1|\le  C\int_{t_{j}}^{t_{j+1}\wedge t}\int_{t_{j'}}^{t_{j'+1}\wedge t}\int_s^{t_{j+1}\wedge t}\int_{s'}^{t_{j'+1}\wedge t}  \d u\d v\d s'\d s \le C\De_{j}^2\De_{j'}^2\,.
\end{align}

\smallskip
 \noindent\emph{Step 5: Estimate of $I_{2}$ and $I_{3}$.}
We turn to  $I_2$. From
  \eqref{eqn.Ajt}, \eqref{e.4.25}-\eqref{e.4.26},  Lemma \ref{Malliavin b1}, the polynomial growth assumption
on $b$, Proposition \ref{p.3.2} and Proposition \ref{p.3.6}, we see easily (by using the product rule of Malliavin derivative) that
\[
\EE\sup_{s,s',t,u,j,j', \xi} \left|D_{\xi} \lc A_{j}^{t}b_{1}(s, j) b(X_u)\cdot A_{j'}^{t} b_{1}(s', j')\rc\right|<\infty.
\]
Thus by Lemma \ref{lemma.FDe_B}, we have
\[
\left|\EE\left[ A_{j}^{t}b_{1}(s, j) b(X_u)\cdot A_{j'}^{t} b_{1}(s', j')\Delta B_{s', t_{j'+1}\wedge t}  \right]\right|\le C \De_{j'}\,.
\]
Consequently, we have
\begin{equation}
|I_2|\le   C\De_{j'}\int_{t_{j}}^{t_{j+1}\wedge t}\int_{t_{j'}}^{t_{j'+1}\wedge t}\int_s^{t_{j+1}\wedge t}\d u \d s'\d s \le C \De_{j}^2\De_{j'}^2\,.
\end{equation}
Exactly in  the same way  we have
\begin{equation}
|I_3|\le    C \De_{j}^2\De_{j'}^2\,.
\end{equation}

\smallskip
 \noindent\emph{Step 6: Estimate of $I_{4}$.}
Denote
\[
F(s,s',t, j, j'):= A_{j}^{t}b_{1}(s,j)    A_{j'}^{t}b_{1}(s', j')\,.
\]
Similar to the above argument we can show that
\begin{eqnarray*}
&&\EE \bigg[\sup_{  s,s',t, j, j'\, \xi, \zeta}\big( |F(s,s't, j, j')|
+|D_\xi F(s,s',t, j, j')|\\
&&\qquad\quad +|D_{\xi \zeta}^2F(s,s',t, j, j')|\big)\bigg]\le C<\infty\,.
\end{eqnarray*}
From Lemma \ref{lemma.FDe_B} it follows then
\begin{align} \label{eqn.I4}
\begin{split}
|I_4|
\le& \int_{t_{j}}^{t_{j+1}\wedge t}\int_{t_{j'}}^{t_{j'+1}\wedge t}\left|
 \EE\lc F(s,s',t, j, j') \Delta B_{s,t_{j+1}\wedge t}   \Delta B_{s',t_{j'+1}\wedge t}  \rc \right| \d s'\d s \\
\le& C  \int_{t_{j}}^{t_{j+1}\wedge t}\int_{t_{j'}}^{t_{j'+1}\wedge t}  \left[
\langle \mathbf{1}_{[s,t_{j+1}\wedge t]},  \mathbf{1}_{[s',t_{j'+1}\wedge t]} \rangle  + |t_{j+1}-s| \cdot |t_{j+1}-s'|\right]  \d s'\d s\\
\le&   C  \De_j \De_{j'} \langle \mathbf{1}_{[t_{j},t_{j+1}\wedge t]},  \mathbf{1}_{[t_{j'},t_{j'+1}\wedge t]} \rangle +C  \De_j^2\De_{j'}^2.
\end{split}
\end{align}

\smallskip
 \noindent\emph{Step 7: Conclusion.}
Plugging \eqref{eqn.I1}-\eqref{eqn.I4} into
\eqref{eqn.exact ARAR},  we conclude that
\begin{eqnarray*}
\EE(|Z_{t}|^{2}) \le \sum_{j,j'=0}^{ {k}} |I_{j,j'}|
\le  C\sum_{j,j'=0}^{ {k}} \De_j^2\De_{j'}^2
+
C \De_j \De_{j'}  \sum_{j,j'=0}^{ {k}} \langle \mathbf{1}_{[t_{j},t_{j+1}\wedge t]},  \mathbf{1}_{[t_{j'},t_{j'+1}\wedge t]} \rangle
\\ \le C |\pi|^{2}\,.
\end{eqnarray*}
 This proves the theorem.
\end{proof}

\section{ Limiting distribution}\label{limit}

In the section we consider the asymptotic error distribution of the backward Euler scheme.

\subsection{A linear ordinary differential equation}

Let $\phi_t: [0,T]\to \RR^{m\times m}$ be the solution to   the linear differential  equation
\begin{eqnarray}\label{eqn.phi_t}
\phi_{t} = \text{Id} + \int_{0}^{t} \partial b (X_{s} ) \phi_{s} \d s.
\end{eqnarray}
We have the following estimate for \eqref{eqn.phi_t}.
\begin{lemma}\label{l.5.1}
Let $\phi_t $ 
be defined by \eqref{eqn.phi_t}.

 For all $s,t: 0\leq  s<t\leq T$ we have relations
\begin{eqnarray}
  |\phi_{t}    |  \leq e^{ \lambda (t-s) } |\phi_{s}   |, \quad
\qquad \text{and} \qquad
\sup_{t\in [0,T]} |\phi_{t}' | \leq \kappa \sup_{t\in [0,T]}(1 + |X_{t}|^{\mu}) \cdot    \sup_{t\in [0,T]}  |\phi_{t} |.\label{e.5.3}
\end{eqnarray}
\end{lemma}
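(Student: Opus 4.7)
The plan is as follows. The lemma asserts two separate bounds on $\phi_t$, so I would treat them one at a time, but both follow from the linear ODE $\phi_t'=\partial b(X_t)\phi_t$ combined with the dissipativity encoded in Assumption \ref{ass-1}.

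For the first bound, the key observation is that $\lambda$ comes from the one-sided Lipschitz condition via Lemma \ref{lem.jb}: for every $x\in\RR^m$ we have $\langle x,\partial b(X_t)x\rangle\le \lambda|x|^2$ (with $\lambda=\kappa$ working in the worst case, and in general the largest eigenvalue bound). I would fix an arbitrary unit vector $v\in\RR^m$ and consider the vector-valued map $u_t:=\phi_t v$, which satisfies the linear ODE $u_t'=\partial b(X_t) u_t$ with initial datum $u_s=\phi_s v$. Differentiating,
\begin{align*}
\frac{\d}{\d t}|u_t|^2 = 2\langle u_t,\partial b(X_t) u_t\rangle \le 2\lambda|u_t|^2,
\end{align*}
and Gronwall's inequality yields $|\phi_t v|\le e^{\lambda(t-s)}|\phi_s v|\le e^{\lambda(t-s)}|\phi_s|$. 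Taking the supremum over $|v|=1$ gives the first claim in~\eqref{e.5.3}. Note that this step is essentially the discrete/continuous analogue of the argument already performed for $D_r X_t$ in Proposition \ref{p.3.6}, which I would cite for brevity.

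For the second bound, I would directly read it off from the equation \eqref{eqn.phi_t} differentiated in $t$: $\phi_t'=\partial b(X_t)\phi_t$. Applying the operator norm inequality and the polynomial growth hypothesis (A2) in Assumption \ref{ass-1},
\begin{align*}
|\phi_t'|\le |\partial b(X_t)|\cdot|\phi_t|\le \kappa(1+|X_t|^\mu)\cdot|\phi_t|,
\end{align*}
and taking $\sup_{t\in[0,T]}$ on both sides gives the second inequality in~\eqref{e.5.3}.

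There is no real obstacle here; both bounds are immediate consequences of the basic estimates already established. The only subtlety worth flagging is that the first inequality is a \emph{matrix} inequality while the dissipativity is a \emph{vector} statement, which is why one must argue vector-by-vector and take the supremum at the end rather than trying to differentiate $|\phi_t|^2$ directly (the operator norm is not smooth). Once this is handled, the rest is routine Gronwall/growth bookkeeping.
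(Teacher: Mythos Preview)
Your proof is correct and essentially matches the paper's argument. The paper proceeds column-by-column (differentiating $|\phi_{i,t}|^2$ for each column $\phi_{i,t}$ of $\phi_t$) rather than vector-by-vector as you do, and it omits the explicit verification of the second bound, but the underlying Gronwall-plus-dissipativity mechanism is identical.
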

\begin{proof}
Take $s,t: 0\leq s<t\leq T$.
Let  $\phi_{i, t} $ be the $i$-th  column  of $\phi_{t}$.  Using \eqref{eqn.phi_t} we can write   that
\begin{align*}
   |\phi_{i,t} |^{2} = & |\phi_{i,s} |^{2} + 2\int_{s}^{t}  \langle \partial b (X_{u} ) \phi_{i,u}, \phi_{i,u} \rangle \d u
 .
\end{align*}
As in the proof of Lemma \ref{corollary.I-tJ} we can show that $\partial b (X_{u} ) $ satisfies   condition \eqref{eqn.lam} with $\lambda=\kappa$. Therefore, we get
\begin{eqnarray*}
   |\phi_{i,t} |^{2} &\leq &  |\phi_{i,s} |^{2}  + 2\lambda \int_{s}^{t}   |\phi_{i,u}  |^{2}  \d u .
\end{eqnarray*}

  Gronwall's inequality then  yields
\begin{eqnarray*}
   |\phi_{i,t}  |^2 \leq e^{ 2\lambda (t-s)} |\phi_{i,s}  |^2  .
\end{eqnarray*}
This gives the first inequality in \eqref{e.5.3}.
\end{proof}
Now we want to approximate the above equation
\eqref{eqn.phi_t}  by
  the  following ``forward-backward'' Euler scheme:
\begin{eqnarray}\label{eqn.fbeuler1}
\phi_{t }^{\pi} = \phi_{t_{k}}^{\pi} +   \partial b (X_{t_{k}} ) \phi_{t }^{\pi} \, \cdot \, (t-t_k) \, ,
\qquad
\phi^{\pi}_{0} = \text{Id}
.
\end{eqnarray}
In particular, when $t=t_{k+1}$  we have
\begin{eqnarray}\label{eqn.fbeuler2}
\phi_{t_{k+1}}^{\pi} = \phi_{t_{k}}^{\pi} +   \partial b (X_{t_{k}} ) \phi_{t_{k+1}}^{\pi} \Delta_{k}\, ,
\qquad
\phi^{\pi}_{0} = \text{Id}
.
\end{eqnarray}
Note that solving    \eqref{eqn.fbeuler1} for $\phi^{\pi}_{t}$  we have
\begin{align}
\phi_{t}^{\pi}
&=
( I - \partial b (X_{t_{k}} )  (t-t_{k}) )^{-1}   \phi_{t_{k}}^{\pi}\nonumber
\\
& =  \tilde A_{t} \cdot
\phi^{\pi}_{t_k} ,
\label{e.5.2.1}
\end{align}
where for $t_{\ell}<t\leq t_{\ell+1}$ and $\ell=0,\dots,n-1$ we define
\begin{eqnarray}\label{eqn.Atilde}
\tilde A_{t} =( I - \partial b (X_{t_{\ell}} ) (t-t_{\ell}) )^{-1}.
\end{eqnarray}

\begin{prop}\label{p.5.2}
Let $\phi_t$ and $\phi_t^{\pi}$ be defined  by  \eqref{eqn.phi_t} and \eqref{eqn.fbeuler1}, for any $p>1$, we have the following $L^{p}$-convergence
\begin{eqnarray*}
\sup_{t\in[0,T]}  \| \phi^{\pi}_{t} - \phi_{t} \|_{p}    \leq C |\pi|^{H}.
\end{eqnarray*}
\end{prop}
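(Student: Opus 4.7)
The plan is a discrete error analysis at the grid points followed by an interpolation step to all $t\in[0,T]$. Define the auxiliary quantity $\bar{\phi}_{t_{k+1}}:=\tilde{A}_{t_{k+1}}\phi_{t_k}$, which applies one step of the numerical propagator to the \emph{exact} value $\phi_{t_k}$. Setting $E_{t_k}:=\phi_{t_k}-\phi_{t_k}^{\pi}$ and using $\phi_{t_{k+1}}^{\pi}=\tilde{A}_{t_{k+1}}\phi_{t_k}^{\pi}$ from~\eqref{e.5.2.1}, linearity yields the one-step recursion
\begin{align*}
E_{t_{k+1}} = \rho_k + \tilde{A}_{t_{k+1}}E_{t_k},\qquad \rho_k := \phi_{t_{k+1}}-\bar{\phi}_{t_{k+1}},
\end{align*}
whose iteration (starting from $E_0=0$) expresses $E_{t_{k+1}}$ as a sum of terms of the form (product of $\tilde{A}$'s)$\,\cdot\,\rho_j$, $j=0,\ldots,k$. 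Because Lemma~\ref{lem.jb} shows $\partial b(X_{t_\ell})$ satisfies~\eqref{eqn.lam} with $\lambda=\kappa$, Lemma~\ref{lem.jinvb} gives $\|\tilde{A}_{t_{\ell+1}}\|_\infty\leq(1-\kappa\Delta_\ell)^{-1}\leq e^{2\kappa\Delta_\ell}$ provided $\kappa|\pi|<1$; hence every such product is bounded in operator norm by the deterministic constant $e^{2\kappa T}$.

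The central step is the local error estimate $\|\rho_k\|_p\leq C\Delta_k^{1+H}$. Subtracting $\bar{\phi}_{t_{k+1}}=\phi_{t_k}+\partial b(X_{t_k})\bar{\phi}_{t_{k+1}}\Delta_k$ from $\phi_{t_{k+1}}=\phi_{t_k}+\int_{t_k}^{t_{k+1}}\partial b(X_s)\phi_s\,\d s$ yields
\begin{align*}
\rho_k = \int_{t_k}^{t_{k+1}}[\partial b(X_s)-\partial b(X_{t_k})]\phi_s\,\d s + \partial b(X_{t_k})\int_{t_k}^{t_{k+1}}[\phi_s-\bar{\phi}_{t_{k+1}}]\,\d s.
\end{align*}
For the first integrand, the mean value theorem combined with~\eqref{eqn.ass21} gives $|\partial b(X_s)-\partial b(X_{t_k})|\leq C(1+|X_s|^\mu+|X_{t_k}|^\mu)|X_s-X_{t_k}|$; coupling this with the deterministic bound $|\phi_s|\leq e^{\kappa T}$ from Lemma~\ref{l.5.1}, the moment estimate~\eqref{e.3.4}, Cauchy--Schwarz, and the sharp $L^{2p}$ H\"older bound $\|X_s-X_{t_k}\|_{2p}\leq C(s-t_k)^H$ from Lemma~\ref{p.3.3} bounds the $L^p$ norm of this integrand by $C(s-t_k)^H$, so (by Minkowski) the first integral is $O(\Delta_k^{1+H})$ in $L^p$. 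The second integrand has $L^p$ norm $O(\Delta_k)$ since both $\|\phi_s-\phi_{t_k}\|_p$ and $\|\bar{\phi}_{t_{k+1}}-\phi_{t_k}\|_p$ are $O(\Delta_k)$, producing an $O(\Delta_k^2)$ contribution which is dominated by the first one when $H<1$.

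Plugging back, $\|E_{t_{k+1}}\|_p\leq e^{2\kappa T}\sum_{j=0}^{k}\|\rho_j\|_p\leq C\sum_{j=0}^{k}\Delta_j^{1+H}\leq CT|\pi|^H$, which establishes the bound at grid points. For $t\in(t_k,t_{k+1})$, the differences $\phi_t-\phi_{t_k}=\int_{t_k}^t\partial b(X_s)\phi_s\,\d s$ and $\phi_t^{\pi}-\phi_{t_k}^{\pi}=\partial b(X_{t_k})\phi_t^{\pi}(t-t_k)$ are both $O(|\pi|)$ in $L^p$ (using the uniform bounds on $\phi$ and $\phi^\pi$ together with the polynomial growth of $\partial b$), so since $|\pi|\leq|\pi|^H$ for $H\leq 1$ and small $|\pi|$, the triangle inequality propagates the estimate uniformly in $t$.

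The principal difficulty is extracting the \emph{sharp} exponent $H$ rather than $H-\varepsilon$: a purely pathwise argument via the H\"older seminorm $\|X\|_\alpha$ would be forced to take $\alpha<H$ and lose an arbitrarily small factor. The resolution is to measure $X$-increments in $L^p$ directly through Lemma~\ref{p.3.3}, which preserves the full exponent $H$. A secondary technicality is that products of the matrices $\tilde{A}_{t_\ell}$ must admit a deterministic bound independent of the path (and not merely be integrable), which is supplied by the one-sided Lipschitz hypothesis via Lemmas~\ref{lem.jinvb} and~\ref{lem.jb}.
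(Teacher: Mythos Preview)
Your proof is correct and follows essentially the same route as the paper: express the error via the one-step recursion $z_t=\tilde A_t z_k+\tilde A_t\tilde R_t$, iterate, bound the products of $\tilde A_\ell$ by $e^{2\kappa T}$ via Lemma~\ref{lem.jinvb}, and estimate the local consistency error as $O(|\pi|^{1+H})$. The one noteworthy refinement is that you estimate $X$-increments in $L^p$ using the second relation of Lemma~\ref{p.3.3}, which delivers the exponent $H$ exactly, whereas the paper invokes the pathwise H\"older bound $\|X\|_\alpha$ (and hence, strictly speaking, obtains $|\pi|^{\alpha}$ for $\alpha<H$).
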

\begin{proof}
On the subinterval $ t\in (t_k, t_{k+1}]$,
we   write
\begin{align}
\label{eqn.phi_t2}
\phi_t
 &=  \phi_{t_k} +\int_{t_k}^t \partial b(X_s) \phi_s \d s \nonumber\\
&=  \phi_{t_k} +  \partial b(X_{t_k} ) \phi_t\cdot (t-{t_k}) +\tilde{R}_{t} \,,
\end{align}
where for $t_{k}<t\leq t_{k+1}$ we denote
\[
\tilde{R}_{t} =\int_{t_k}^t \partial b(X_s) \phi_s \d s-\int_{t_k}^t  \partial b(X_{t_k} ) \phi_t \d s\,.
\]
Thus, we have  
\begin{align}
\phi_{t}
=&
( I - \partial b (X_{t_{k}} )  (t-t_{k}) )^{-1}
\lc
 \phi_{t_{k}}  +\tilde{R}_{t}
\rc
= \tilde{A}_{t} \phi_{t_{k}}  +\tilde{A}_{t} \tilde{R}_{t}
\,. \label{e.5.6}
\end{align}

Denote by $z_t=\phi_t-\phi_t^{\pi}$ and $z_k=\phi_{t_k}-\phi_{t_k}^{\pi}$. Then,  similar to \eqref{eqn.ztk}, by taking difference between \eqref{e.5.2.1} and \eqref{e.5.6} we obtain
\begin{align}\label{eqn.zt1}
z_t=\tilde{A}_t z_k + \tilde{A}_t \tilde{R}_{t} \,.
\end{align}

   Computing  \eqref{eqn.zt1} recursively lead to
\begin{align}\label{eqn.zt2}
z_t= \tilde{A}_{t}\tilde{R}_{t} +\tilde{A}_{t}\tilde{A}_{t_{k}}\tilde{R}_{t_{k}} +\cdots+\tilde{A}_{t}\tilde{A}_{t_{k}}\cdots \tilde{A}_{t_{1}}  \tilde{R}_{t_{1}}\,
. 
\end{align}
Rewrite $\tilde{R}_{t}$ as
\begin{eqnarray}\label{eqn.rtilde}
\tilde{R}_{t} =
\int_{t_k}^t \lc\partial b(X_s)- \partial b(X_{t_k} )\rc \phi_s \d s+\int_{t_k}^t  \partial b(X_{t_k} ) \lc\phi_s - \phi_{t} \rc\d s \,.
\end{eqnarray}
Now apply  Assumption \ref{ass-1} (A2),   relation \eqref{eqn.xholder} and   relations in \eqref{e.5.3}   to the first integral in \eqref{eqn.rtilde}, and then apply Assumption \ref{ass-1} (A2)  and relation \eqref{e.5.3} again to the second integral.  We obtain
\begin{align}\label{eqn.Rtilden}
  ||\tilde{R}_t|| \le C |\pi |^{1+H}  (1+\sup_{0\le s\le T} |X_s|^{ \mu}+\|B\|_{\al})^{2} \,.
\end{align}

Let us recall the Lemma \ref{corollary.I-tJ}. We take $\theta=1$ and assume that $\kappa\cdot  |\pi|<1$.  Then we have
the following estimation for $\tilde A_{t}$
\begin{eqnarray}\label{eqn.Atilden}
||\tilde A_{t}|| \le (1- \kappa(t-t_{\ell}))^{-1}\le e^{2\kappa(t-t_{\ell})}.
\end{eqnarray}
Substituting   the estimations \eqref{eqn.Rtilden} and \eqref{eqn.Atilden} into \eqref{eqn.zt2}, it has
\begin{align*}
||z_t||\le  C |\pi |^{H}  (1+\sup_{0\le s\le T} |X_s|^{ \mu}+\|B\|_{\al})^{2}\,,
\end{align*}
where $C$ is a constant with respect to $\kappa, \mu, H, T$, but independent of the partition $\pi$. According to the boundness of $X_t$, see \eqref{e.3.4}, we obtain earlier proof the proposition.
\end{proof}

Take $t_{k}<t\leq t_{k+1}$, and $t_j\le t_k$ and denote $\phi_{t_j}(t)=\phi_t\phi_{t_j}^{-1}$. According to the definition \eqref{eqn.phi_t} of $\phi$ we have
\begin{equation}
\phi_{t_j}(t)=\text{Id} +\int_{t_j}^t  \partial b(X_s) \phi_{t_j}(s) \d s\,,\quad t_j\le t\le T\,.  \label{e.5.7}
\end{equation}
On the other hand, using    the expression \eqref{eqn.fbeuler1}  we get for
$\phi_{t_j}^\pi (t):=\phi_t^\pi (\phi_{t_j}^\pi)^{-1}$
\begin{equation} \label{e.5.8}
\phi_{t_j}^\pi (t)= \text{Id} +  \int_{t_j}^t  \partial b(X_{t_k}) \phi_{t_j}^{\pi}(s) \d s\,,\quad t_j\le t_k< t\leq t_{k+1}\le T\, .
\end{equation}
Therefore, in the similar way as for Proposition \ref{p.5.2} we can show the following.

\begin{prop}\label{p.5.3}
Let $\phi_{t_j}(t)$ and $\phi_{t_j}^\pi (t)$ be defined by \eqref{e.5.7} and \eqref{e.5.8} separately.  Then for any $p>1$, we have the following $L^{p}$-convergence
\begin{eqnarray*}
\sup_{0\le t_j\le t, t\in[0,T]}  \| \phi_{t_j}^\pi (t)  - \phi_{t_j}(t)\|_{p}    \leq C |\pi|^{H}.
\end{eqnarray*}
\end{prop}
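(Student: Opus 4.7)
The plan is to mimic the proof of Proposition \ref{p.5.2} almost verbatim, with the starting point $0$ replaced by the arbitrary partition point $t_j$. Both $\phi_{t_j}(t)$ and $\phi_{t_j}^\pi(t)$ start from $\text{Id}$ at time $t_j$ and satisfy equations of the same form as \eqref{eqn.phi_t} and \eqref{eqn.fbeuler1}, only with integration beginning at $t_j$, so the one-step error argument carries over with minor bookkeeping changes.

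First, on any subinterval $(t_k,t_{k+1}]$ with $t_k\ge t_j$, I would derive the identity
\begin{align*}
\phi_{t_j}(t)=\tilde{A}_t\,\phi_{t_j}(t_k)+\tilde{A}_t\,\tilde{R}_{t_j}(t),
\end{align*}
in direct analogy with \eqref{e.5.6}, where the residual
\begin{align*}
\tilde{R}_{t_j}(t)=\int_{t_k}^t\lc\partial b(X_s)-\partial b(X_{t_k})\rc\phi_{t_j}(s)\d s+\int_{t_k}^t\partial b(X_{t_k})\lc\phi_{t_j}(s)-\phi_{t_j}(t)\rc\d s
\end{align*}
is the obvious counterpart of \eqref{eqn.rtilde}. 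From \eqref{e.5.8} we simultaneously have $\phi_{t_j}^\pi(t)=\tilde{A}_t\,\phi_{t_j}^\pi(t_k)$.

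Next, setting $z_t:=\phi_{t_j}(t)-\phi_{t_j}^\pi(t)$ and using $z_{t_j}=0$, iterating the one-step relation through all subintervals between $t_j$ and $t$ produces a telescoping formula of the same shape as \eqref{eqn.zt2}. The two ingredients needed are then: (i) the uniform bound $\|\tilde A_t\|\le e^{2\kappa(t-t_\ell)}$ from \eqref{eqn.Atilden}, which makes any product of the $\tilde A$'s at most $e^{2\kappa T}$; and (ii) an estimate $\|\tilde{R}_{t_j}(t)\|\le C|\pi|^{1+H}(1+\sup_s|X_s|^\mu+\|B\|_\al)^2$ of the type \eqref{eqn.Rtilden}. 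The latter follows from the polynomial growth of $\partial b$ and $\partial^2 b$ in Assumption \ref{ass-1}, the H\"older estimate in Lemma \ref{p.3.3} for $X$, and Lemma \ref{l.5.1} for $\phi_{t_j}$. Summing the $O(|\pi|^{-1})$ residuals of size $|\pi|^{1+H}$ yields
\begin{align*}
\|z_t\|\le C|\pi|^H\lc 1+\sup_{0\le s\le T}|X_s|^\mu+\|B\|_\al\rc^2,
\end{align*}
and taking $L^p$-norms, then applying Proposition \ref{p.3.2} together with standard Gaussian moment bounds for $\|B\|_\al$, yields the claimed $|\pi|^H$ rate.

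The main (and essentially only) obstacle is to ensure that the constants remain independent of the starting point $t_j$, so that the final supremum over $t_j$ is harmless. This uniformity is essentially automatic: Lemma \ref{l.5.1} gives $|\phi_{t_j}(t)|\le e^{\lambda(t-t_j)}\le e^{\lambda T}$ uniformly in $t_j$, and the residual $\tilde{R}_{t_j}(t)$ only sees increments of $X$ and $\phi_{t_j}$ on a single subinterval of length at most $|\pi|$, whose H\"older control depends only on the global quantities $\|X\|_\al$ and $\sup_{s}|\phi_{t_j}(s)|$—both bounded independently of $t_j$ under Assumptions \ref{ass-1} and \ref{ass-2}. Therefore the estimate above holds uniformly in $t_j$, completing the proof.
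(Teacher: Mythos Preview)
Your proposal is correct and is exactly the approach the paper takes: the paper does not write out a separate proof for Proposition~\ref{p.5.3} but simply states that it follows ``in the similar way as for Proposition~\ref{p.5.2}''. Your outline faithfully carries out that analogy, and your explicit check that the constants are uniform in the starting index $t_j$ is the one additional point needed to justify the supremum.
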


\subsection{The asymptotic error of the backward Euler scheme.}

In this subsection we prove our main result on  the asymptotic error  of the backward Euler scheme. The proof is based on  the representation \eqref{eqn.zt} of the error process $Z_t$ and some approximation results for $R^{t}_{j}$ and $A^{t}_{j}$.

We first prove  the following lemma.

\begin{lemma}\label{l.5.4}
Recall that $R_{k} =
 R_{k}(t_{k+1})=\int_{t_k}^{t_{k+1}} b(X_s)-b(X_{t_{k+1}})\d s$ is defined by  \eqref{eqn.rkt}. Let
\begin{eqnarray}\label{eqn.rt}
\hat{R}_{k} =R_k +R_{1k} + R_{2k}
 \,,
\end{eqnarray}
where we define
\begin{equation}\label{eqn.r1r2}
R_{1k}:= \partial bb (X_{t_{k}}) \int_{t_{k}}^{t_{k+1}} ({t_{k+1}} -  s) \d s \quad
\text{and} \quad
R_{2k}:=\partial b (X_{t_{k}}) \int_{t_{k}}^{t_{k+1}}   (B_{t_{k+1}} - B_{s}) \d s,
\end{equation}
and we denote
\begin{eqnarray}\label{eqn.pbb}
\partial b  b (x) = \sum_{i=1}^{m}\frac{\partial b}{\partial x^{i}} (x) b^{i}(x) .
\end{eqnarray}
Then  for any $\al<H$ we have
\begin{eqnarray}\label{eqn.rk.r}
\left|  \hat{R}_{k}
 \right| \leq G_\al  \Delta_{k}^{2\al+1},
\end{eqnarray}
where  $G_\al $ is some random variable  independent of $n$ and $k$  such that $\|G_\al\|_{p}<\infty$ for any $p>0$.
\end{lemma}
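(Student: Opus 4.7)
The plan is to identify $-R_{1k}$ and $-R_{2k}$ as exactly the first-order Taylor-expansion of $R_k$ in the Young-integral sense, so that $\hat R_k$ is a genuine second-order remainder. Since $b\in C^3$ by Assumption \ref{ass-2} and $X$ is H\"older of any order $\al<H$ by Lemma \ref{p.3.3}, the Young-integral chain rule applies and for $s\in[t_k,t_{k+1}]$ gives
\begin{align*}
b(X_{t_{k+1}})-b(X_s)\,=\,\int_s^{t_{k+1}}\!\partial b(X_u)\,dX_u\,=\,\int_s^{t_{k+1}}\!\partial b\,b(X_u)\,du\,+\,\int_s^{t_{k+1}}\!\partial b(X_u)\,dB_u.
\end{align*}
Substituting into $R_k=-\int_{t_k}^{t_{k+1}}[b(X_{t_{k+1}})-b(X_s)]\,ds$, using Fubini on the double integrals, and then freezing $u=t_k$ in each integrand produces exactly the compensators $-R_{1k}$ and $-R_{2k}$. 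Consequently,
\begin{align*}
\hat R_k\,=\,&-\int_{t_k}^{t_{k+1}}\!\!\int_s^{t_{k+1}}\!\big[\partial b\,b(X_u)-\partial b\,b(X_{t_k})\big]\,du\,ds\\
&-\int_{t_k}^{t_{k+1}}\!\!\int_s^{t_{k+1}}\!\big[\partial b(X_u)-\partial b(X_{t_k})\big]\,dB_u\,ds.
\end{align*}

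For the first (deterministic) double integral, the chain rule yields $\partial(\partial b\,b)=\partial^2b\cdot b+(\partial b)^2$, which by Assumption \ref{ass-2} is of polynomial growth; combined with Lemma \ref{p.3.3} this gives
\begin{align*}
|\partial b\,b(X_u)-\partial b\,b(X_{t_k})|\,\le\,C\big(1+\sup_{[0,T]}|X|^{2\mu}\big)\,\|X\|_\al\,(u-t_k)^\al,
\end{align*}
so integrating over the triangle $\{t_k\le s\le u\le t_{k+1}\}$ yields a bound of order $\De_k^{2+\al}\le T^{1-\al}\De_k^{2\al+1}$. For the Young integral term I set $f_u:=\partial b(X_u)-\partial b(X_{t_k})$ and use the refined Young (sewing) estimate
\begin{align*}
\Big|\int_s^{t_{k+1}}\!f_u\,dB_u-f_s(B_{t_{k+1}}-B_s)\Big|\,\le\,C\,\|f\|_\al\,\|B\|_\al\,(t_{k+1}-s)^{2\al}.
\end{align*}
Crucially, because $f_{t_k}=0$, the boundary term factors as the product of two $O(\De_k^\al)$ pieces, $|f_s|\le C(1+\sup|X|^\mu)\|X\|_\al\De_k^\al$ and $|B_{t_{k+1}}-B_s|\le\|B\|_\al\De_k^\al$, so the inner integral is $O(\De_k^{2\al})$; integrating over $s\in[t_k,t_{k+1}]$ then delivers the desired $O(\De_k^{2\al+1})$ bound.

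Collecting both contributions, $|\hat R_k|\le G_\al\De_k^{2\al+1}$ with $G_\al$ a polynomial combination of $\sup_{[0,T]}|X|$, $\|X\|_\al$ and $\|B\|_\al$; all of these have finite $L^p$ moments for every $p\ge 1$ (Proposition \ref{p.3.2} for $\sup|X|$, Lemma \ref{p.3.3} for $\|X\|_\al$, and Fernique's theorem for the Gaussian norm $\|B\|_\al$). The main technical point is the sharp treatment of the $dB$-term: one must simultaneously exploit the H\"older regularity of $\partial b(X_\cdot)$ and the vanishing $f_{t_k}=0$ via the refined Young/sewing estimate (which is slightly stronger than Lemma \ref{lem.young} as stated, but a standard result in Young integration). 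Invoking only one of these two features produces at most an $O(\De_k^{\al+1})$ bound, which is weaker than the required $\De_k^{2\al+1}$ since $2\al+1>\al+1$.
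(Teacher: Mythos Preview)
Your proof is correct and follows essentially the same route as the paper's: both expand $b(X_{t_{k+1}})-b(X_s)$ via the chain rule, subtract the frozen terms at $t_k$ to isolate $\hat R_k$ as $-\int_{t_k}^{t_{k+1}} r_k(s)\,ds$, and bound $r_k(s)$ by $G_\al\Delta_k^{2\al}$ using H\"older regularity and a Young-type estimate. Your observation that the sewing/Young--Lo\`eve bound you invoke is sharper than Lemma~\ref{lem.young} as literally stated is accurate; the paper tacitly uses the same refined estimate (vanishing of the integrand at $t_k$ gives the extra factor $\Delta_k^{\al}$) when it writes $\|\partial b(X)\|_\al\|B\|_\al\Delta_k^{2\al}$.
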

\begin{proof}
Using   It\^o formula and \eqref{maineq} we can write
 \begin{align}\label{eqn.rex1}
b(X_{t_{k+1}})-b(X_s) &=
\int_{s}^{t_{k+1}}   \partial b  (X_{u})   \d X_{u}
\nonumber
\\
&=
\int_{s}^{t_{k+1}}   \partial b      b(X_{u})\d {u} + \int_{s}^{t_{k+1}}   \partial b  (X_{u})    \d B_{u}.
\end{align}
Let us denote
\begin{eqnarray}\label{eqn.rk}
r_{k} (s)= \int_{s}^{t_{k+1}}  \partial b      b(X_{u}) -  \partial b      b(X_{t_{k}}) \d  u + \int_{s}^{t_{k+1}}   \partial b  (X_{u})   - \partial b  (X_{t_{k}}) \d B_{u} .
\end{eqnarray}
Then using \eqref{eqn.rex1} it is clear that
 \begin{eqnarray}\label{eqn.rex}
b(X_{t_{k+1}})-b(X_s)  &=& \partial bb (X_{t_{k}}) (  {t_{k+1}} -  s ) +  \partial b (X_{t_{k}})  (B_{t_{k+1}} - B_{s}) +r_{k}(s)
.
\end{eqnarray}
 Integrate  both  sides of equation \eqref{eqn.rex} and recall the definition of $\hat{R}_{k} $ in  \eqref{eqn.rt}. We get
\begin{eqnarray}\label{eqn.rt2}
\hat{R}_{k} = -\int_{t_{k}}^{t_{k+1}}  r_{k}(s) ds .
\end{eqnarray}

In order to show \eqref{eqn.rk.r} it suffices to prove a bound for    $r_{k}(s)$.
We apply     Lemma \ref{lem.young} to the second integral of the right side of \eqref{eqn.rk}  to get
for any  $\al\in (1/2,H)$
\begin{eqnarray*}
|r_{k}(s)| \leq \|\partial bb (X)\|_{\al} \Delta_{k}^{1+\al } + \|\partial b (X)\|_{\al} \|B\|_{\al} \Delta_{k}^{2\al } .
\end{eqnarray*}
Thanks to    Assumption \ref{ass-1} (A2) and  Lemma \ref{p.3.3} for  $X$, we obtain  that  $\|\partial bb (X)\|_{\al}$ and $\|\partial b (X)\|_{\al} \|B\|_{\al}$ are bounded by some  random variable $G_\al$ which has any finite moment.
That is, we have   the estimate
\begin{eqnarray*}
 \sup_{s\in [t_{k}, t_{k+1}]}  | r_{k}(s) |
 \le  G_\al \Delta_{k}^{2\al  }.
\end{eqnarray*}
 Applying the above estimate to \eqref{eqn.rt2} we obtain   \eqref{eqn.rk.r}.
\end{proof}

Let us turn to the estimate of   $A^{t}_{j}$. We first have   the following lemma for $A_{k}$.
\begin{lemma}
Let  $A_{k} = A_{t_{k}}$, $k=0,1,\dots, n$ be defined in \eqref{eqn.At}. Take $\al \in (1/2, H)$.  The following relation holds almost surely
\begin{eqnarray}\label{eqn.aexpb}
\sup_{k=0,1,\dots,n}\Delta_{k}^{-1-\al} \| A_{k+1}  -  \exp\big(      \partial b(X_{t_k}) \Delta_{k} \big) \| <\infty.
\end{eqnarray}
\end{lemma}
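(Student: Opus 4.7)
The plan is to compare $A_{k+1} = (I - \Delta_k M_k)^{-1}$, where $M_k := \int_0^1 J_b(u X_{t_{k+1}} + (1-u) Y_{t_{k+1}}) \d u$, with $\exp(\Delta_k N_k)$, where $N_k := \partial b(X_{t_k})$, via the decomposition
\begin{align*}
A_{k+1} - \exp(\Delta_k N_k)
= \bigl[(I-\Delta_k M_k)^{-1} - (I-\Delta_k N_k)^{-1}\bigr]
+ \bigl[(I-\Delta_k N_k)^{-1} - \exp(\Delta_k N_k)\bigr].
\end{align*}
For the second bracket, Taylor expansions of $(I-\Delta_k N_k)^{-1}$ and $\exp(\Delta_k N_k)$ coincide through the linear term in $\Delta_k N_k$, so this bracket is controlled by $C\Delta_k^2\|N_k\|^2$, where $\|N_k\|\leq \kappa(1+\sup_t|X_t|^\mu)$ is almost surely finite by Assumption~\ref{ass-1}(A2) and Proposition~\ref{p.3.2}. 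The first bracket equals $\Delta_k(I-\Delta_k M_k)^{-1}(M_k - N_k)(I-\Delta_k N_k)^{-1}$, and by Lemma~\ref{corollary.I-tJ} both inverses are bounded in operator norm by $e^{2\kappa\Delta_k}$. The whole problem therefore reduces to producing a pathwise H\"older estimate of the form $\|M_k - N_k\| \le G_\alpha \Delta_k^\alpha$ for some random variable $G_\alpha$ with finite moments.

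To estimate $\|M_k - N_k\|$, I would write
\begin{align*}
M_k - N_k = \int_0^1 \bigl[J_b(uX_{t_{k+1}}+(1-u)Y_{t_{k+1}}) - J_b(X_{t_k})\bigr]\,\d u
\end{align*}
and apply the mean value theorem together with the polynomial growth of $\partial^2 b$ from Assumption~\ref{ass-2}, using Propositions~\ref{p.3.2} and~\ref{lem.numbound} to absorb the resulting polynomials in $\sup_t|X_t|$ and $\sup_t|Y_t|$ into $G_\alpha$. This leaves the task of bounding $|X_{t_{k+1}}-X_{t_k}|$ and $|Y_{t_{k+1}}-X_{t_k}|$. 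The first is $\le \|X\|_\alpha \Delta_k^\alpha$ by Lemma~\ref{p.3.3}, and for the second I would split
\begin{align*}
|Y_{t_{k+1}}-X_{t_k}| \le |Y_{t_{k+1}}-Y_{t_k}| + |Y_{t_k}-X_{t_k}|,
\end{align*}
handling the first piece directly from the scheme \eqref{e.4.2}: $|Y_{t_{k+1}}-Y_{t_k}| \le \kappa(1+|Y_{t_{k+1}}|^\mu)\Delta_k + \|B\|_\alpha \Delta_k^\alpha$, which is $O(\Delta_k^\alpha)$ pathwise.

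The main obstacle is the remaining piece $|Y_{t_k}-X_{t_k}|$, since Theorem~\ref{BEM} (cf.\ \eqref{eqn.errorT}) gives only an $L^2$ rate whereas here one needs a pathwise bound uniform in $k$. I would obtain it by reusing the error representation $Z_t = \sum_{j=0}^{k} A_j^t R_j^t$ from \eqref{eqn.zt} on the level of sample paths: Lemma~\ref{At} yields $\|A_j^t\| \le e^{2\kappa T}$, while a pathwise H\"older estimate of the remainder, based on $|b(X_s)-b(X_{t_{j+1}\wedge t})| \le C(1+\sup|X|^{2\mu})\|X\|_\alpha |t_{j+1}\wedge t - s|^\alpha$, yields $|R_j^t| \le G_\alpha \Delta_j^{1+\alpha}$. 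Summing over $j$ then gives $\sup_t|Z_t| \le G_\alpha |\pi|^\alpha$ almost surely, so under the (quasi-)uniform mesh regime $|\pi|\sim\Delta_k$ this is $O(\Delta_k^\alpha)$. Chaining these estimates gives $\|M_k - N_k\|\le G_\alpha\Delta_k^\alpha$, and substitution into the decomposition above produces the claimed bound $\|A_{k+1} - \exp(\Delta_k \partial b(X_{t_k}))\| \le G_\alpha \Delta_k^{1+\alpha}$ with $G_\alpha$ having moments of every order.
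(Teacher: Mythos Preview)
Your proof is correct and close in spirit to the paper's, but the algebraic decomposition is different. The paper factors
\[
A_{k+1}-\exp(\Delta_k N_k)=A_{k+1}\bigl(\exp(-\Delta_k N_k)-A_{k+1}^{-1}\bigr)\exp(\Delta_k N_k),
\]
then Taylor-expands $\exp(-\Delta_k N_k)$ and substitutes the explicit form $A_{k+1}^{-1}=I-\Delta_k M_k$, arriving at the same two pieces you call $I_1$ (the $M_k-N_k$ term) and $I_2$ (the quadratic Taylor remainder). Your route via the intermediate resolvent $(I-\Delta_k N_k)^{-1}$ and the identity $A^{-1}-B^{-1}=A^{-1}(B-A)B^{-1}$ is a cleaner way to isolate the same terms, and it makes transparent that both $(I-\Delta_k M_k)^{-1}$ and $(I-\Delta_k N_k)^{-1}$ are controlled by Lemma~\ref{corollary.I-tJ}.

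One point where your write-up is in fact more complete than the paper's: in bounding $\Delta_k^{-\alpha}|X_{t_{k+1}}-Y_{t_{k+1}}|$ the paper simply cites the $L^2$ estimate \eqref{eqn.errorT}, which does not by itself yield an almost-sure bound. Your pathwise argument---feeding the representation $Z_t=\sum_j A_j^t R_j^t$ with $\|A_j^t\|\le e^{2\kappa T}$ and the sample-path estimate $|R_j^t|\le G_\alpha\Delta_j^{1+\alpha}$ coming from Lemma~\ref{p.3.3}---produces $\sup_t|Z_t|\le G_\alpha|\pi|^\alpha$ almost surely, which is what is actually needed. Your caveat about the quasi-uniform mesh ($|\pi|\sim\Delta_k$) is appropriate: in Section~\ref{limit} the partition is taken uniform (note the identity $\tfrac{n}{2}(t_{j+1}-t_j)^2=\tfrac{1}{2n}$ used later), so this is not an additional restriction. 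A minor simplification: instead of splitting $|Y_{t_{k+1}}-X_{t_k}|\le |Y_{t_{k+1}}-Y_{t_k}|+|Y_{t_k}-X_{t_k}|$, you could use $|Y_{t_{k+1}}-X_{t_{k+1}}|+|X_{t_{k+1}}-X_{t_k}|$ as the paper does, which avoids the separate scheme-increment bound.
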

\begin{proof}
We first use the elementary matrix  identity $A-B=A(B^{-1}-A^{-1})B$ to  write
\begin{eqnarray}\label{eqn.akd}
A_{k+1}  -  \exp\big(      \partial b(X_{t_k}) \Delta_{k} \big) = A_{k+1} \cdot \Big(  \exp\big(   -   \partial b(X_{t_k}) \Delta_{k} \big) -A_{k+1}^{-1}   \Big) \cdot \exp\big(      \partial b(X_{t_k}) \Delta_{k} \big) .
\end{eqnarray}
According to  Lemma \ref{At}   $A_{k+1}$     is uniformly bounded in $k$ by a constant. On the other hand,  using Assumption \ref{ass-1} (A2)     and    Lemma \ref{p.3.3}    we can  show  that $\sup_{k}\exp\big(      \partial b(X_{t_k}) \Delta_{k} \big) \to 1$ almost surely. So in order to bound \eqref{eqn.akd} it    suffices  to show that   the second term on the right side of \eqref{eqn.akd} multiplied by $\Delta_{k}^{-1-\al}$ is uniformly bounded in $k$.

Observe that by    applying the  Taylor expansion $e^{A} = I+A+e^{\theta A}\cdot A^{2}/2$ to $  \exp\big(   -   \partial b(X_{t_k}) \Delta_{k} \big)$ and using  the definition    \eqref{eqn.At} of $A$   we can write
\begin{align*}
&
 \exp\big(   -   \partial b(X_{t_k}) \Delta_{k} \big) - A_{k+1}^{-1}
\\
 =& I -   \partial b(X_{t_k}) \Delta_{k} +  \exp\big(   -  \theta\cdot \partial b(X_{t_k}) \Delta_{k} \big)\cdot
 ( \partial b(X_{t_k}) \Delta_{k})^{2}
 \\
 &
 \qquad-
 \left( I-\Delta_{k}\int_0^1 \partial b(uX_{t_{k+1}}+(1-u)Y_{t_{k+1}})\d u\right)
 ,
\end{align*}
where $\theta$ is some constant in $(0,1)$. Rearrange the above we get
\begin{eqnarray*}
\Delta_{k}^{-1-\al}\Big(
 \exp\big(   -   \partial b(X_{t_k}) \Delta_{k} \big) - A_{k+1}^{-1}
  \Big)
  &=&I_{1}+I_{2},
\end{eqnarray*}
where
\begin{align*}
I_{1} =&
  \Delta_{k}^{-\al} \Big(
 -  \partial b(X_{t_k})  +
  \int_0^1 \partial b(uX_{t_{k+1}}+(1-u)Y_{t_{k+1}})\d u
  \Big),
  \\
 I_{2} =&
   \Delta_{k}^{1-\al} \exp\big(   -  \theta\cdot \partial b(X_{t_k}) \Delta_{k} \big)
 \cdot ( \partial b(X_{t_k})  )^{2}.
\end{align*}

In the following we show that both $I_{1}$ and $I_{2}$ are uniformly bounded in $k$, which, together with the discussion after equation \eqref{eqn.akd}, shows that \eqref{eqn.aexpb} holds.

We first note that $I_{2}$ can be bounded in the
similar way as for  $\exp\big(      \partial b(X_{t_k}) \Delta_{k} \big) $ in \eqref{eqn.akd}, and we have $\sup_{k}I_{2}<\infty$ almost surely. For the estimate of  $I_{1}$, we observe that by the mean value theorem and Assumption \ref{ass-1} (A2)  for $\partial b$ we have
\begin{eqnarray*}
I_{1} \leq C \Delta_{k}^{-\al}
(1+|X|^{\mu}+|Y|^{\mu} ) ( |X_{t_{k}}-X_{t_{k+1}}| + | X_{t_{k+1}} - Y_{t_{k+1}} | ) .
\end{eqnarray*}

  We combine the boundedness of $X_t$ and $Y_t$ in \eqref{e.3.4} and \eqref{eqn.ynb}, the first relation in \eqref{eqn.xholder}, and the  strong convergence result \eqref{eqn.errorT}. We conclude  $I_1\le C $.
The proof is complete.
\end{proof}

We are ready to derive the following approximation for $A^{t}_{j}$.

\begin{lemma} \label{lam.Aphi}
For any $p\in [1, 2)$ and any $\al<H$,  we have
\begin{eqnarray}\label{eqn.lam.Aphi}
\EE \sup_{0\le t_j<t\le T} \left| A_{j}^{t} - \phi_{t}^\pi \left( \phi_{{t_j}}^\pi\right)^{-1} \right|^p \le C_{p,\al}  |\pi|^{p\al}.
\end{eqnarray}
\end{lemma}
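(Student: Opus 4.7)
My plan is to exhibit both matrices as products over the grid points in $(t_j, t]$ and then compare them factor-by-factor by telescoping. From \eqref{eqn.atj}, if $t_k < t \le t_{k+1}$, then $A_j^t = A_t \prod_{\ell=j+1}^{k} A_\ell$; iterating \eqref{e.5.2.1} analogously gives $\phi_t^\pi (\phi_{t_j}^\pi)^{-1} = \tilde A_t \prod_{\ell=j+1}^{k} \tilde A_{t_\ell}$, with $\tilde A$ defined in \eqref{eqn.Atilde}.

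The workhorse will be the per-factor bound $\|A_{\ell+1} - \tilde A_{t_{\ell+1}}\| \le G_\al \Delta_\ell^{1+\al}$ for a random variable $G_\al$ with suitable moment control. I obtain it by inserting the common intermediate matrix $\exp(\partial b(X_{t_\ell})\Delta_\ell)$: on one hand, \eqref{eqn.aexpb} provides $\|A_{\ell+1} - \exp(\partial b(X_{t_\ell})\Delta_\ell)\| \le G_{\al,1}\Delta_\ell^{1+\al}$; on the other hand, setting $M = \partial b(X_{t_\ell})\Delta_\ell$, a Taylor expansion of $(I-M)^{-1}$ against $e^M$ gives $\|\tilde A_{t_{\ell+1}} - e^M\| \le C(1+|X_{t_\ell}|^{2\mu})\Delta_\ell^{2}$. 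Combining by the triangle inequality and using $\Delta_\ell \le |\pi|$ absorbs the extra power. The same reasoning handles the boundary factor $\|A_t - \tilde A_t\|$.

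Next I telescope via
\begin{align*}
\prod_{\ell=j+1}^k A_\ell - \prod_{\ell=j+1}^k \tilde A_{t_\ell}
&=\sum_{\ell=j+1}^k \Bigl(\prod_{m=\ell+1}^k A_m\Bigr)(A_\ell - \tilde A_{t_\ell})\Bigl(\prod_{m=j+1}^{\ell-1} \tilde A_{t_m}\Bigr),
\end{align*}
together with an analogous expansion that also separates the boundary factors $A_t$ and $\tilde A_t$. The uniform operator-norm bounds $\|A_\ell\|,\|\tilde A_{t_\ell}\| \le e^{2\kappa \Delta_{\ell-1}}$ from Lemma \ref{At} and \eqref{eqn.Atilden} show that every partial product is bounded by $e^{2\kappa T}$. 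Summing the per-factor estimates therefore gives
$$\sup_{0 \le t_j < t \le T}\|A_j^t - \phi_t^\pi (\phi_{t_j}^\pi)^{-1}\| \le C e^{4\kappa T} G_\al \sum_{\ell} \Delta_\ell^{1+\al} \le C e^{4\kappa T} T\, G_\al\, |\pi|^\al,$$
and raising to the $p$-th power and taking expectation closes the argument, provided $G_\al \in L^p$.

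I expect the main obstacle to be precisely the $L^p$ control of $G_\al$: the preceding lemma \eqref{eqn.aexpb} is only an almost sure statement, and the random constant $G_{\al,1}$ inherits a factor of the form $\Delta_\ell^{-\al}|X_{t_{\ell+1}} - Y_{t_{\ell+1}}|$. The only available moment rate for $|Y - X|$ is the $L^2$ estimate \eqref{eqn.errorT}, which is exactly why the conclusion is restricted to $p < 2$; passing to $p = 2$ would require strengthening \eqref{eqn.errorT} to some $L^{2+\ep}$ rate.
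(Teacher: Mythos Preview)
Your approach is correct and shares the same skeleton as the paper's: both write $A_j^t$ and $\phi_t^\pi(\phi_{t_j}^\pi)^{-1}$ as products of the one-step factors $A_\ell$ and $\tilde A_{t_\ell}$ respectively, telescope the difference, and use the uniform bounds \eqref{eqn.bound At2}, \eqref{eqn.Atilden} on the partial products. The genuine difference is in how the per-factor error $\|A_\ell-\tilde A_{t_\ell}\|$ is estimated.

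You route this through the intermediate matrix $\exp(\partial b(X_{t_{\ell-1}})\Delta_{\ell-1})$, invoking \eqref{eqn.aexpb} for one half and a Taylor expansion of $(I-M)^{-1}$ versus $e^M$ for the other. The paper instead bypasses the exponential entirely and compares $A_t$ with $\tilde A_t$ directly via the resolvent identity $A-B=A(B^{-1}-A^{-1})B$: since both $A_t^{-1}$ and $\tilde A_t^{-1}$ are explicit affine functions of Jacobians of $b$, their difference is immediately
\[
\|A_t-\tilde A_t\|\le C\Delta\,(1+|X|^{\mu}+|Y|^{\mu})\bigl(|X_t-Y_t|+|X_t-X_{\eta(t)}|\bigr),
\]
and one reads off $\EE\sup_t\|A_t-\tilde A_t\|^p\le C|\pi|^{p(1+\al)}$ directly from \eqref{eqn.errorT} and \eqref{eqn.xholder}. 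This is shorter and avoids having to upgrade the almost-sure statement \eqref{eqn.aexpb} to an $L^p$ bound (which, as you correctly note, requires unpacking its proof to isolate the $\Delta^{-\al}|X-Y|$ contribution and then H\"older against the $L^2$ rate \eqref{eqn.errorT}). Your route is sound and your diagnosis of the $p<2$ restriction is exactly right; the paper's route simply gets to the same place with one fewer intermediary.
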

\begin{proof}
By iterating \eqref{e.5.2.1}, we   observe that
\begin{eqnarray}
\phi^{\pi}_{t} =\tilde{A}_{t}\prod_{\ell=j+1}^{k} \tilde A_{ {\ell}} \cdot
\phi^{\pi}_{t_j}  .\label{e.5.2}
\end{eqnarray}
So multiplying $(\phi^{\pi}_{t_{j}})^{-1}$ on both sides of  \eqref{e.5.2} leads to
 \begin{eqnarray*}
 \phi^{\pi}_{t } (\phi^{\pi}_{t_{j}})^{-1} 
 = \tilde{A}_{t}\prod_{\ell=j+1}^{k } \tilde A_\ell  \, .
\end{eqnarray*}

On the other hand, recall the expression of $A_j^t$ (see \eqref{eqn.At} and \eqref{eqn.atj})
\begin{eqnarray*}
A_{j}^{t}
&=& A_t \lc \prod_{\ell ={j+1}}^{k } A_\ell \rc =
A_t \prod_{\ell ={j+1}}^{k} \left(I-\Delta_{\ell-1} \int_0^1 \partial b(uX_{t_\ell}+(1-u) Y_{t_{\ell}} ) \d u\right)^{-1} .
\end{eqnarray*}
Thus, applying the estimates  \eqref{eqn.bound At2} and \eqref{eqn.Atilden} of $A_{t}$ and $\tilde{A}_{t}$ we get
\begin{align}
& \|A_{j}^{t}-\phi_{t }^\pi(\phi_{t_j})^{-1}\|
\leq \|A_t-\tilde A_{t } \|\prod_{\ell =j+1}^{k } \|A_{\ell}\|
\nonumber\\
   & +\|\tilde{A}_{t}\|\sum_{i=j+1}^k   \|A_{j+1}\| \cdot \|A_{j+2}\| \cdots \|A_{i-1}\| \cdot
\|A_i-\tilde A_i\| \cdot \|\tilde{A}_{i+1}\| \cdots \|
\tilde A_k\|
\nonumber
\\
&\qquad
\leq
C (k-j)\sup_{t\in [0,T]}\|A_{t}-\tilde{A}_{t}\|
\,.\label{e.5.10}
\end{align}
It follows from the definition of  $A_t$ in \eqref{eqn.At} and $\tilde A_t$ in \eqref{eqn.Atilde} Assumption \ref{ass-1} (A2) that
\begin{align*}
\|A_t-\tilde A_t\|
&\le  \De_{i-1}  \|A_t \|\left\| \int_0^1 \partial b(uX_{t}+(1-u) Y_{t }) \d  u-\partial b( X_{\eta(t)})
\right\| \|\tilde A_t\|\\
&\le  C  \De_{i-1}  \sup_{0\le t\le T}  \left[1+  | X_t|^\mu +|Y_t|^\mu\right] \sup_{0< t\le T}
(|X_t-Y_t|+|X_{t}-X_{\eta(t)}|) \,.
\end{align*}
Applying   \eqref{e.3.4},  \eqref{eqn.ynb}, the convergence result \eqref{eqn.errorT} and the first relation in \eqref{eqn.xholder}  we obtain
\begin{eqnarray}\label{eqn.aa}
\EE\sup_{t\in [0,T]}\|A_t-\tilde A_t\| \leq C|\pi|^{\al+1}
\end{eqnarray}
for any $\al<H$.
Finally, substituting  \eqref{eqn.aa} into \eqref{e.5.10} we conclude the estimate  \eqref{eqn.lam.Aphi}.   The proof is now complete.
\end{proof}
Combine the Lemma \ref{lam.Aphi} with Proposition \ref{p.5.3}, we obtain the following lemma.
\begin{lemma}\label{l.5.7}
 For any $p\in [1, 2)$,  we have
\begin{eqnarray*}
\EE \sup_{0\le t_j<t\le T} \left| A_{j}^{t} - \phi_{t}  \left( \phi_{{t_j}} \right)^{-1} \right|^p \le C_p  |\pi|^{p\al}.
\end{eqnarray*}
\end{lemma}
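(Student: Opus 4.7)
The plan is a straightforward application of the triangle inequality combined with the two preceding results, Lemma \ref{lam.Aphi} and Proposition \ref{p.5.3}. Using the decomposition
\begin{align*}
A_{j}^{t} - \phi_{t}\,\phi_{t_j}^{-1}
= \bigl[A_{j}^{t} - \phi_{t}^{\pi}\,(\phi_{t_j}^{\pi})^{-1}\bigr]
+ \bigl[\phi_{t}^{\pi}\,(\phi_{t_j}^{\pi})^{-1} - \phi_{t}\,\phi_{t_j}^{-1}\bigr],
\end{align*}
I pass to the supremum over $\{0\le t_j<t\le T\}$ and then apply Minkowski's inequality in $L^p$.

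For the first bracket, Lemma \ref{lam.Aphi} directly yields
$\bigl(\EE\sup_{t_j<t}|A_{j}^{t} - \phi_{t}^{\pi}(\phi_{t_j}^{\pi})^{-1}|^{p}\bigr)^{1/p} \le C_{p,\al}\,|\pi|^{\al}$.
For the second bracket, recall from the notation established before Proposition \ref{p.5.3} that $\phi_{t}\,\phi_{t_j}^{-1} = \phi_{t_j}(t)$ and $\phi_{t}^{\pi}(\phi_{t_j}^{\pi})^{-1} = \phi_{t_j}^{\pi}(t)$. Proposition \ref{p.5.3} then gives $L^p$-convergence at rate $|\pi|^{H}$; since $\al<H$, this term is of strictly smaller order than $|\pi|^{\al}$ for small mesh. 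Raising the resulting inequality to the $p$-th power produces the claimed bound $\EE\sup_{t_j<t}|A_{j}^{t} - \phi_{t}\phi_{t_j}^{-1}|^{p}\le C_p\,|\pi|^{p\al}$.

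The only mild technicality is that Proposition \ref{p.5.3} is stated with the supremum \emph{outside} the $L^{p}$ norm, whereas the present lemma requires the supremum \emph{inside}. This is not a real obstacle: inspecting the proof of Proposition \ref{p.5.2} (which Proposition \ref{p.5.3} closely parallels) one sees that the $L^p$-bound is obtained from an almost-sure pathwise estimate of the form $\|z_t\|\le C|\pi|^{H}(1+\sup_{s\le T}|X_s|^\mu+\|B\|_\al)^{2}$, whose right-hand side has finite $p$-th moment uniformly in $(t_j,t)$. Taking the supremum over $(t_j,t)$ inside, and then taking $L^p$, therefore still yields the $|\pi|^{H}$ rate, which is all that is needed to close the argument.
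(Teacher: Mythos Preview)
Your proposal is correct and follows exactly the approach of the paper, which simply states that the lemma is obtained by combining Lemma \ref{lam.Aphi} with Proposition \ref{p.5.3}. Your observation about the supremum inside versus outside the $L^p$-norm, and its resolution via the pathwise estimate underlying Proposition \ref{p.5.2}, is a valid clarification that the paper leaves implicit.
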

Now we state the main theorem of this section.
\begin{theorem}  For any $p\in [1, 2)$,  we have the following convergence in $L_{p}$:
\begin{equation}
nZ_t\to U_t:=\frac12 \int_0^t \phi_t\phi_s^{-1} \partial b b(X_s) \d s+\frac 12 \int_0^t \phi_t\phi_s^{-1} \partial b  (X_s) \d B_s
 \,.
\label{eqn_Uform}
\end{equation}
Moreover, $U_t$ satisfies
\begin{equation}
dU_t=\partial b(X_t) U_t \d t+
\frac12  \partial b b(X_t) \d t+\frac 12  \partial b  (X_t) \d B_t\,. \label{eqn_Ueqn}
\end{equation}
\end{theorem}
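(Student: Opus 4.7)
The plan is to combine the error representation $Z_t = \sum_{j=0}^{k} A_j^t R_j^t$ from \eqref{eqn.zt} with the refined approximations for $R_j^t$ and $A_j^t$ provided by Lemma \ref{l.5.4} and Lemma \ref{lam.Aphi}/Lemma \ref{l.5.7}. First, applying Lemma \ref{l.5.4} on the truncated subinterval $[t_j, t_{j+1}\wedge t]$, I write $R_j^t = -R_{1j}^t - R_{2j}^t + \hat R_j^t$, where the higher-order remainder satisfies $|\hat R_j^t| \le G_\alpha |\pi|^{2\alpha+1}$ for any $\alpha < H$. Using the uniform bound $\|A_j^t\| \le C$ from \eqref{eqn.Ajt} together with the $O(1/|\pi|)$ number of terms, the contribution $n\sum_j A_j^t \hat R_j^t$ is of size $O(n^2 |\pi|^{2\alpha+1}) = O(|\pi|^{2\alpha-1})$, which vanishes in $L^p$ since $H > 1/2$ permits $\alpha > 1/2$.

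Next I would substitute $A_j^t$ by $\phi_t \phi_{t_j}^{-1}$ using Lemma \ref{l.5.7}. The resulting error $n\sum_j (A_j^t - \phi_t\phi_{t_j}^{-1}) R_j^t$ must be shown to be $o(1)$ in $L^p$; this requires combining the estimate of Lemma \ref{l.5.7} with the sizes $R_{1j}^t = O(|\pi|^2)$ and $R_{2j}^t = O(|\pi|^{1+H})$, using H\"older's inequality and Malliavin-calculus bounds (in the style of Lemma \ref{lemma.FDe_B}) to handle the stochastic part. After these substitutions, what remains is $-n\sum_{j=0}^{k} \phi_t\phi_{t_j}^{-1}(R_{1j}^t + R_{2j}^t)$, which naturally splits into a deterministic and a stochastic piece.

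The deterministic piece $-\tfrac{n}{2}\sum_j \phi_t\phi_{t_j}^{-1}\partial b\, b(X_{t_j})\,(t_{j+1}\wedge t - t_j)^2$ is, after accounting for the appropriate scaling $n\Delta_j = O(1)$, a Riemann sum, and by the continuity of the integrand together with Proposition \ref{p.3.2} and Proposition \ref{p.5.2} it converges in $L^p$ to $\tfrac{1}{2}\int_0^t \phi_t\phi_s^{-1}\partial b\, b(X_s)\,ds$. For the stochastic piece, the Fubini / Young integration-by-parts identity $\int_{t_j}^{t_{j+1}}(B_{t_{j+1}} - B_s)\,ds = \int_{t_j}^{t_{j+1}}(u - t_j)\,dB_u$ shows that the sum is asymptotically equivalent to $-\tfrac{n}{2}\sum_j \phi_t\phi_{t_j}^{-1}\partial b(X_{t_j})\,\Delta_j\,\Delta B_j$ plus a lower-order Young correction, and this weighted fBm sum converges to $\tfrac{1}{2}\int_0^t \phi_t\phi_s^{-1}\partial b(X_s)\,dB_s$ by the limit theorems for weighted random sums in \cite{hu2016rate, LT}.

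I expect the main obstacle to be this last convergence of the stochastic piece: one must simultaneously control Malliavin derivatives of the weight $\phi_t\phi_{t_j}^{-1}\partial b(X_{t_j})$ and verify that the ``midpoint'' replacement does not generate a Stratonovich-type correction (which is guaranteed because $H>1/2$ places us in the Young regime). Finally, \eqref{eqn_Ueqn} follows from \eqref{eqn_Uform} by writing $U_t = \phi_t\bigl[\tfrac{1}{2}\int_0^t \phi_s^{-1}\partial b\, b(X_s)\,ds + \tfrac{1}{2}\int_0^t\phi_s^{-1}\partial b(X_s)\,dB_s\bigr]$ and differentiating using the relation $d\phi_t = \partial b(X_t)\phi_t\,dt$ from \eqref{eqn.phi_t}: the differential of $\phi_t$ generates the drift $\partial b(X_t)U_t\,dt$, while the boundary contributions $\phi_t \phi_t^{-1} = \mathrm{Id}$ produce exactly $\tfrac{1}{2}\partial b\, b(X_t)\,dt$ and $\tfrac{1}{2}\partial b(X_t)\,dB_t$.
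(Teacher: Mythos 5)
Your proposal follows essentially the same route as the paper's proof: the same decomposition of $nZ_t$ via Lemma \ref{l.5.4} (splitting $R_j$ into $-R_{1j}-R_{2j}+\hat R_j$) and Lemma \ref{l.5.7} (replacing $A_j^t$ by $\phi_t\phi_{t_j}^{-1}$), the same Riemann-sum argument for the drift term, the same identity $\int_{t_j}^{t_{j+1}}(B_{t_{j+1}}-B_s)\,\d s=\int_{t_j}^{t_{j+1}}(u-t_j)\,\d B_u$ combined with the weighted-sum limit theorem of \cite{hu2016rate} for the stochastic term, and the same differentiation of the variation-of-parameters formula for \eqref{eqn_Ueqn}. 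The only cosmetic difference is that the paper isolates the last partial subinterval as a separate term $I_5$, which your truncated-interval treatment of $\hat R_j^t$ absorbs implicitly.
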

\begin{proof}
 Let $k$ be such that  $t_{k}<t\leq t_{k+1}$. Recall                                                                                                                                                                                                                                                                                                                                                                                                                                                                          relation \eqref{eqn.zt}. We have
\begin{align*}
nZ_{t} &= \sum_{j=0}^{ k -1} A_{j}^t  (nR_{j}  ) + A_{k }^t  (nR_{k }^{t} ).
\end{align*}
Recall the representation  \eqref{eqn.rt} for $R_{k}$. We consider the following decomposition of   $nZ_{t}$
\begin{align}
nZ_{t}
 &= \sum_{j=0}^{ k -1} \lc A_{j}^{t} -\phi_t\phi_{t_j}^{-1} \rc(nR_{j}  )
 +\sum_{j=0}^{k  -1}  \phi_t\phi_{t_j}^{-1}     n\hat R_{j}   - \sum_{j=0}^{k -1}  \phi_t\phi_{t_j}^{-1}     n  R_{1j}
\nonumber  \\
 &\qquad -\sum_{j=0}^{k -1}  \phi_t\phi_{t_j}^{-1}     n R_{2j} +  A_{k }^t  (nR_{k }^t  )
 \nonumber
 \\
 &=:I_1+I_2+I_3+I_4+I_5 \,.
 \label{eqn.nz}
\end{align}
In the following we consider the convergence of $I_{i}$, $i=1,\dots, 5$ individually.

We first consider the convergence of $I_{5}$. Note  that by applying    Assumption (A2) and the H\"{o}lder continuity  of $X_t$ in  \eqref{eqn.xholder} to \eqref{eqn.rkt} we get
\begin{equation}
\EE|R_k(t)|^p\le C_p |\pi|^{p(1+H)},
\qquad p\geq 1
\,.
\label{e.5.16}
\end{equation}
So, taking into account      the estimate  of $A_j^t$ in  \eqref{eqn.Ajt}, we have
\begin{equation}
\EE |I_5|^p = \EE \left| A_{k}^t  (nR_{k}^t  ) \right|^p <C|\pi|^{pH}\,.
\end{equation}
This implies the convergence $I_{5}\to0$ as $|\pi|\to0$.

For the convergence of $I_{1}$ we apply  Lemma \ref{l.5.7} and relation  \eqref{e.5.16}. This yields
\begin{equation}
\EE |I_1|^p \le \sum_{j=0}^{ k-1} \EE  \left| \lc A_{j}^{t} -\phi_t\phi_{t_j}^{-1} \rc(nR_{j} ) \right|^p \le C \sum_{j=0}^{ k_2-1}|\pi|^{(\al+H)p} \leq  |\pi|^{(\al+H)p-1}  \,.
\end{equation}
Since $\al+H>1$ for $H>1/2$, we conclude that $I_{2}\to 0$ as $|\pi|\to 0$.

For $I_{2}$ we apply  Lemma  \ref{l.5.4}     to get
\begin{equation}
\EE |I_2|^p \le \sum_{j=0}^{k -1} \EE \left| \phi_t\phi_{t_j}^{-1} n\hat R_{j}\right|^p \le \sum_{j=0}^{k -1} |\pi|^{2\al p }\,.
\end{equation}
So  $  |I_2|\to 0$ as  $|\pi|\to 0$.

Let us consider the convergence of $I_{3}$ and $I_{4}$.
 According to the  definition of $R_{1j}$ we have
\begin{align}
I_3
&=\sum_{j=0}^{k-1}  \phi_t\phi_{t_j}^{-1}  \cdot  \partial b b(X_{t_j}) \cdot\frac{n}{2} (t_{j+1}-t_j)^2 \nonumber \\
&=\frac{1}{2n} \sum_{j=0}^{k-1}  \phi_t\phi_{t_j}^{-1}         \partial b b(X_{t_j}).
\end{align}
So  by the continuity of $\phi_t\phi_s^{-1} \partial b b(X_s)$ in $L_{p}$  we obtain $$I_3 \to \frac12 \int_0^t \phi_t\phi_s^{-1} \partial b b(X_s) \d s$$  in $L_{p}$ as $|\pi|\to 0$.

For $I_{4}$ we write
\begin{align}
I_4
&=\sum_{j=0}^{k_2-1}  n\phi_t\phi_{t_j}^{-1}    \partial b(X_{t_j}) \int_{t_j}^{t_{j+1}} \int_s^{t_{j+1}} \d B_u \d s\nonumber \\
&=n\sum_{j=0}^{k_2-1}  \phi_t\phi_{t_j}^{-1}    \partial b(X_{t_j}) \int_{t_j}^{t_{j+1}} (u-t_j)   \d B_u .
\end{align}
We apply   \cite[Corollary 7.2]{hu2016rate}   to get the convergence in $L_{p}$ as $|\pi|\to 0$:
 $$I_{4}\to\frac 12 \int_0^t \phi_t\phi_s^{-1} \partial b  (X_s) \d B_s . $$
In summary of the convergence of $I_{i}$, $i=1,\dots, 5$,
  we conclude the convergence \eqref{eqn_Uform}.  It is straightforward to check that $U_t$ satisfies
\eqref{eqn_Ueqn}.
\end{proof}

\section{Numerical Experiments}\label{Experiments}

In this section, we validate  our theoretical results     by performing two numerical experiments. In the following    $B_t $, $t\geq 0$ denotes  a standard   fractional Brownian motion with Hurst parameter $H\in (0,1)$.

\begin{example}\label{example1}
We consider the following one-sided Lipschitz stochastic differential  equation
\begin{align}\label{eqn.x^3}
\d X_t = -X_t^3 \d t +  \d B_t,
\end{align}
with  $X(0)=5$ and Hurst parameter $H=0.6$.
In this example we  compare the performance of three numerical methods, namely the backward Euler-Maruyama method,  the forward Euler-Maruyama method  and the Crank-Nicolson method.

We   use  the backward Euler method    with step size $ 0.0001$ to compute a sample of  the exact solution. For the simulation of the  three  numerical methods
we consider    two  step sizes:  $0.02$ and $0.08$.
 Table \ref{table.new} and \ref{table.new1} show the results for the sampling in  two different step sizes.  The  exact solution and the three  numerical approximations of equation \ref{eqn.x^3} are  evaluated at  $T=0.1,0.2,\cdots,0.8$.
 Clearly, the step size $0.08$ is too large for the Euler-Maruyama method and the Crank-Nicolson method on this one-sided Lipschitz problem. On the other hand, when  the step size is cut to $0.02$  all three methods perform well.  
\begin{table*}[htb]
	\footnotesize
	\centering
 \setlength{\tabcolsep}{3pt}
	\caption{The values of three numerical methods and exact solution for \eqref{eqn.x^3} with step size $0.08$.}\label{table.new}
	\begin{tabular}{cccccccccc}
		\toprule
		{value} & {$T=0.08$} & {$T=0.16$} & {$T=0.24$}
		& {$T=0.32$}  &{$T=0.40$} & {$T=0.48$} & {$T=0.56$}
		& {$T=0.64$} &{$T=0.72$} \\		
		
		\midrule
		
		EM  &-5.0498 &4.8234 &-11.5869 &45.4107 &-4.2063e+5 &4.4809e+6 &-8.0082e+46

            &4.1562e+127  & -Inf \\

	     CN &-0.7427 &3.7617 &-4.6843 &23.6835 &-2.1032e+5 &2.2404e+6 &-4.0041e+46
            &2.0781e+127   & -Inf \\

        BEM &3.5643  &2.7000 &2.2182 &1.9562 &1.6969 &1.5096 &1.4973
            &1.7401  &  1.5886 \\
		
	   exact&2.3221  &1.7058 &1.6053 &1.6447 &1.3966 &1.2202 &1.3487
            &1.6903    & 1.4840   \\

		\bottomrule		
	\end{tabular}	
\end{table*}

\begin{table*}[htb]
	\footnotesize
	\centering
	\caption{The values of three numerical methods and exact solution for \eqref{eqn.x^3} with step size $0.02$.}\label{table.new1}
	\begin{tabular}{cccccccccc}
		\toprule
		{value} & {$T=0.08$} & {$T=0.16$} & {$T=0.24$}
		& {$T=0.32$}  &{$T=0.40$} & {$T=0.48$} & {$T=0.56$}
		& {$T=0.64$} &{$T=0.72$} \\		
		
		\midrule
		
		EM  &1.8854 &1.5087 &1.4626 &1.5672 &1.3547 &1.1569 &1.3245
            &1.6991  & 1.4537 \\

	     CN &2.2632 &1.7030 &1.5849 &1.6349 &1.4030 &1.2139 &1.3472
            &1.7000   & 1.4752 \\

        BEM &2.6409 &1.8973 &1.7072 &1.7025 &1.4514 &1.2709 &1.3698
            &1.7009  &  1.4967 \\
		
	   exact&2.3221 &1.7058 &1.6053 &1.6447 &1.3966 &1.2202 &1.3487
            &1.6903    & 1.4840   \\

		\bottomrule		
	\end{tabular}	
\end{table*}
\end{example}

\begin{example}\label{example2}

Let   $B_{t} = (B_t^{1}, B_{t}^{2})$ be a  two-dimensional     fractional Brownian motion with Hurst parameter $H\in (0,1)$ and assume that  $B_t^{1}$ and $B_t^{2}$ are mutually independent.
In this example we consider  the two-dimensional stochastic differential equation studied  in  \cite{abbaszadeh2013design}:
\begin{equation}
\label{eg2}
\left\{ \begin{split}
\d X_t &= \lc X_t-Y_t- X_t^3-X_tY_t^2 \rc \d t+ \d B_t^{1}, \\
\d Y_t &= \lc X_t+Y_t- X_t^2Y_t- Y_t^3 \rc \d t+ \d B_t^{2},
\end{split}\right.
\end{equation}
with the initial value $X(0)=1,Y(0)=1$ for   $t\in [0,1]$. It is easy to verify that     equation  \eqref{eg2} satisfies the one-sided Lipschitz condition.
\end{example}
\begin{figure}[htb]
\centering
\subfigure[$H=0.6$]{
\includegraphics[width=6cm]{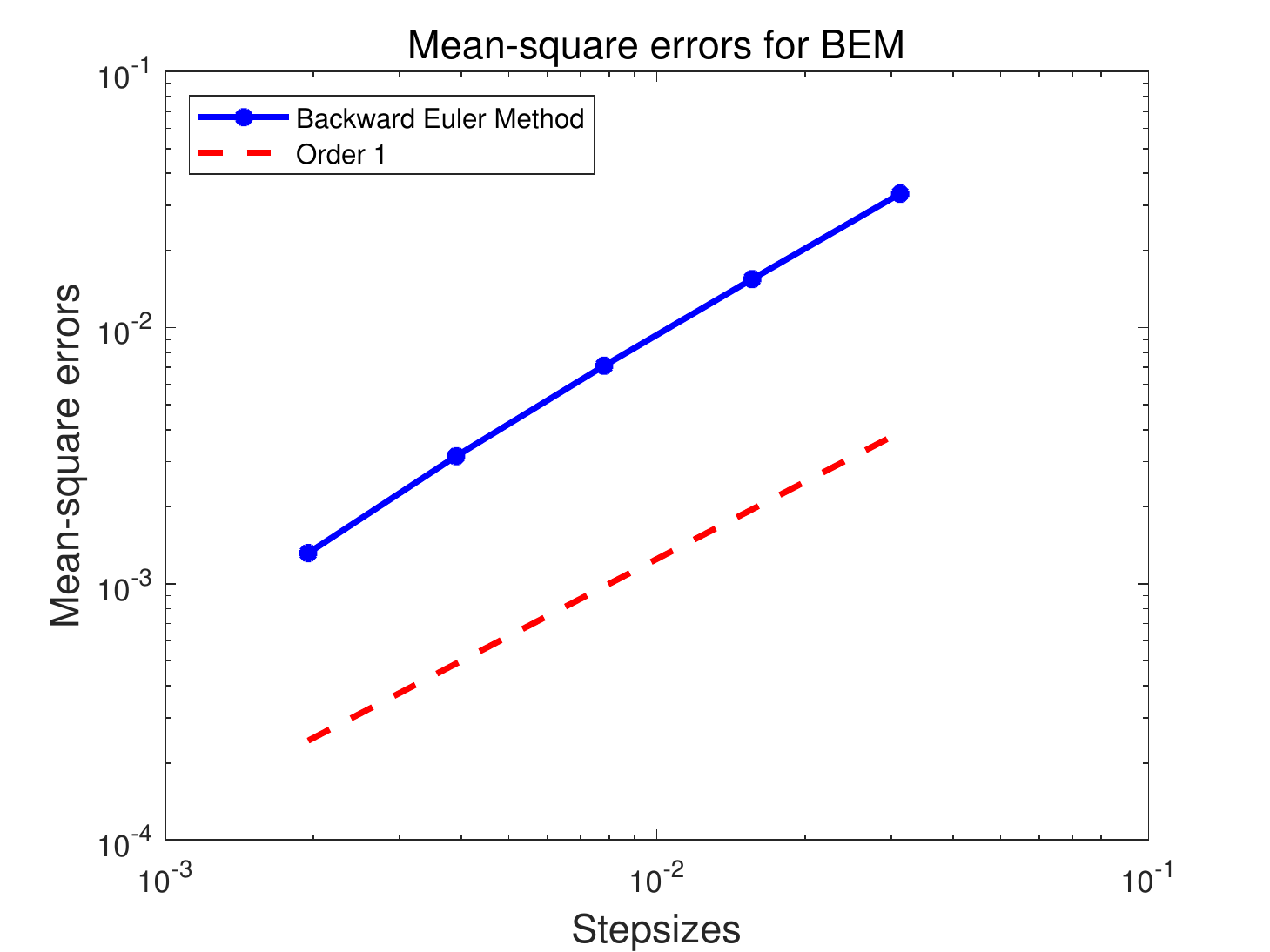}
}
\subfigure[$H=0.7$]{
\includegraphics[width=6cm]{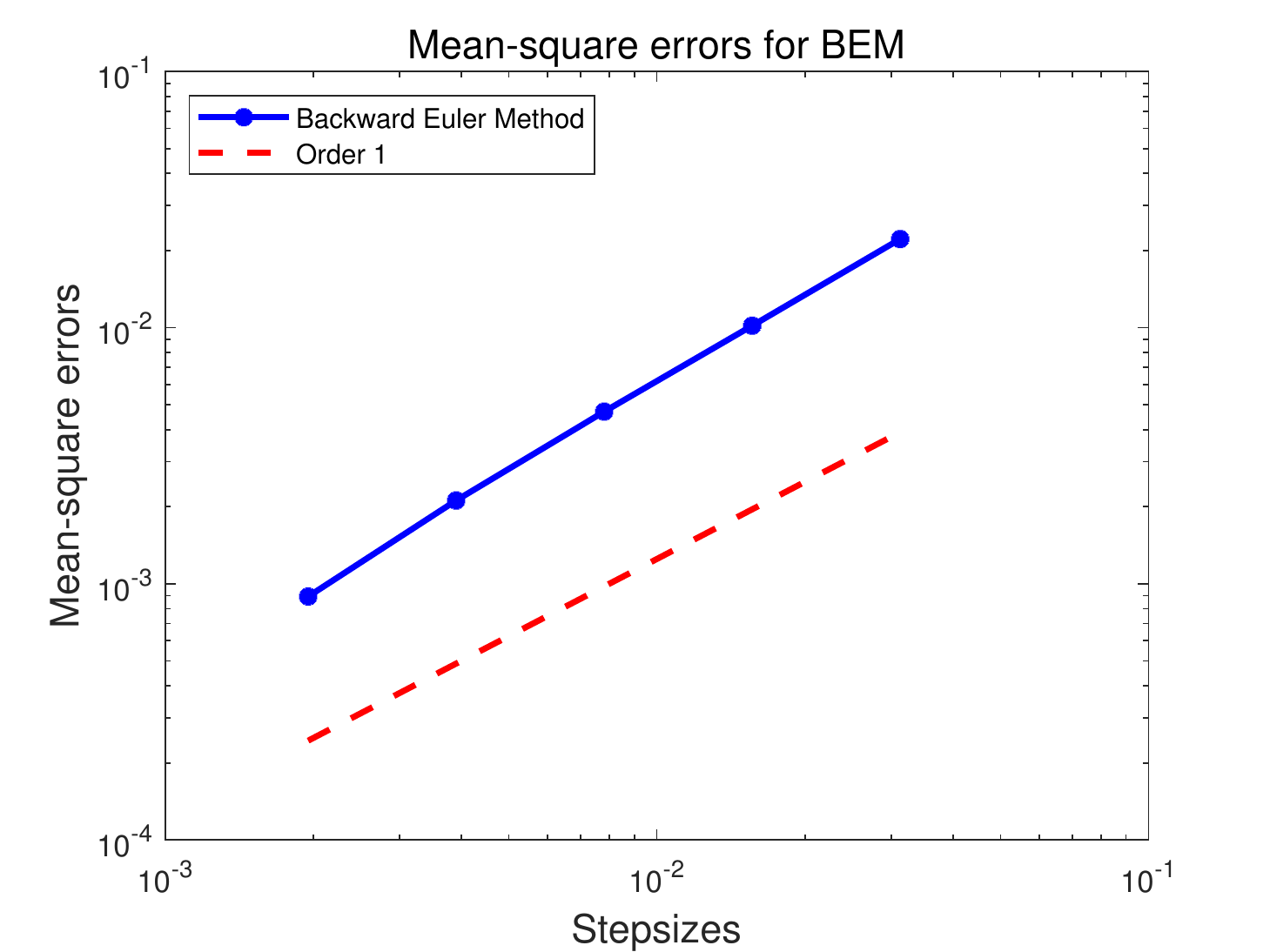}
}
\subfigure[$H=0.8$]{
\includegraphics[width=6cm]{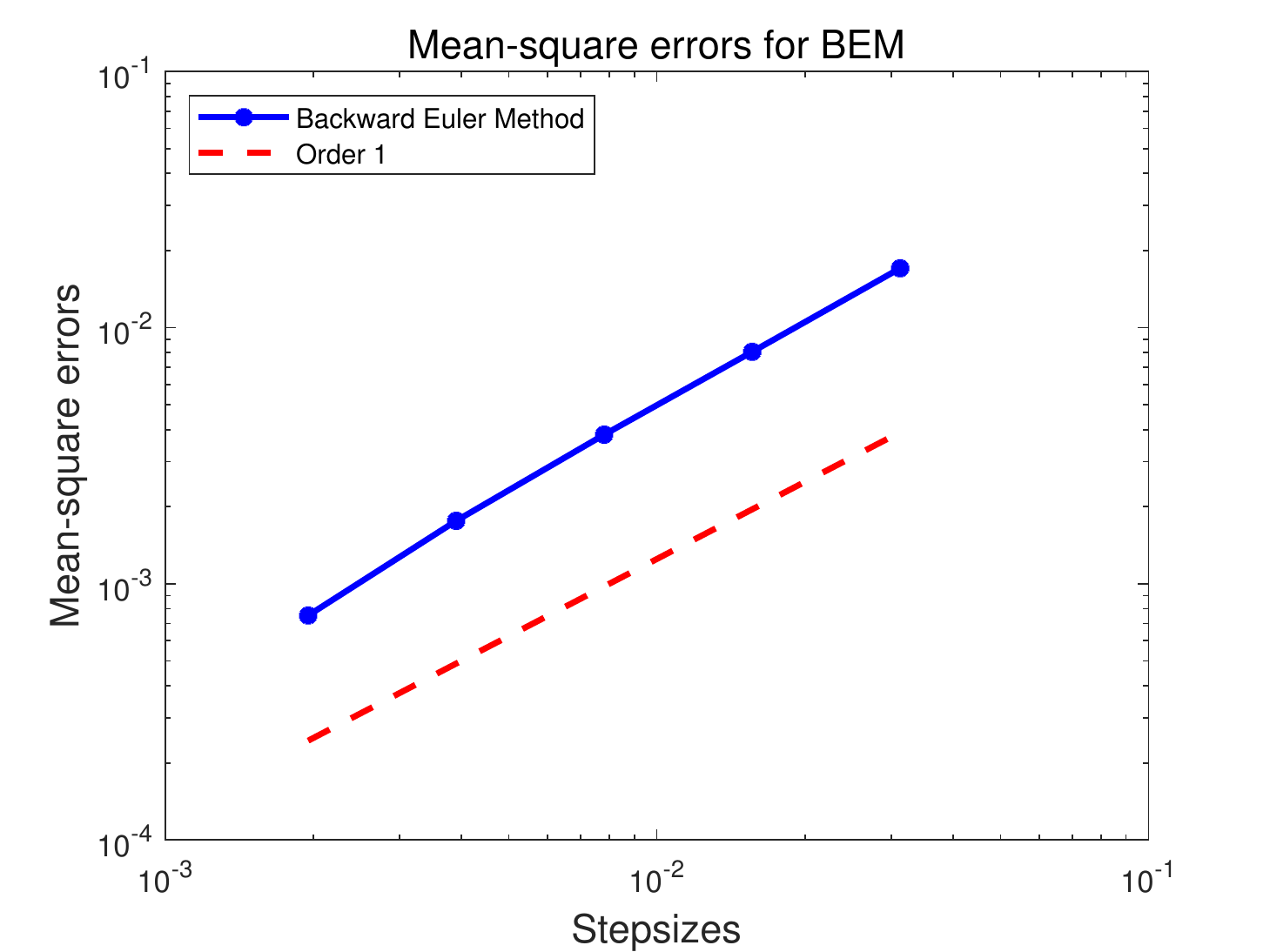}
}
\subfigure[$H=0.9$]{
\includegraphics[width=6cm]{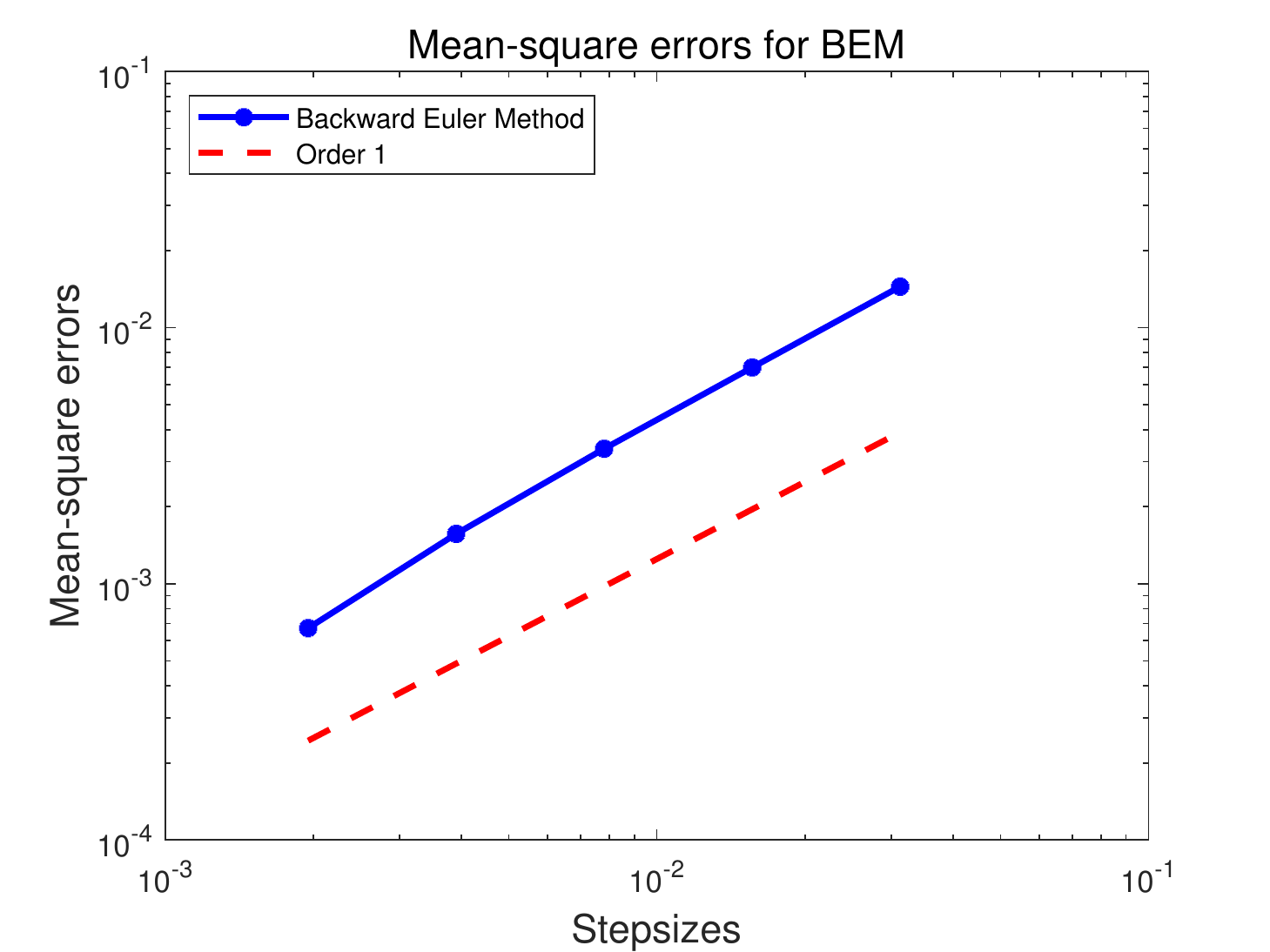}
}
\caption{The strong mean-square convergence error of approximation of SDEs (\ref{eg2}).}
\label{eg2-1}
\end{figure}
 To find  the convergence rate of the backward Euler method,
 in our numerical computation we take the time step sizes  $2^{-k}$, $k=5,6,\cdots,9$.
As in the previous Example \ref{example1}    the numerical solution with a much smaller  step size (in this example we take the step size $2^{-11}$) is used to replace  the exact solution.
We perform  $M=1000$ simulations, and we compute  the  mean square error  by
 \begin{align*}
 \epsilon (T)=\sqrt{\frac{1}{M}\sum_{\ell =1}^M(|X^\ell_T-X^\ell _N|^2+|Y^\ell_T-Y^\ell _N|^2)}.
 \end{align*}

For the four subfigures in Figure \ref{eg2-1}, the red dotted reference line has a slope of $1$. When $H=0.6$, the slope of the blue line is $1.1616$ through least squares fitting the error, and when $H=0.9$, the slope is $1.1016$, the rest of the blue lines have slopes between these two values.

Accurate numerical values of error and convergence orders under different step sizes are listed in Table \ref{eg2-2}. Thus we numerically verify that  our theoretical results are in accordance with numerical results.

\begin{table*}[htb]
	\footnotesize
	\centering
	\caption{Mean squared of errors and convergence rates of BEM method for SDEs~(\ref{eg2}).}\label{eg2-2}
	\begin{tabular}{cccccccccc}
		\toprule
		{$\Delta t$} & \multicolumn{2}{c}{$H=0.6$} & \multicolumn{2}{c}{$H=0.7$} & \multicolumn{2}{c}{$H=0.8$}
		& \multicolumn{2}{c}{$H=0.9$}  \\		
		\cmidrule(r){2-3} \cmidrule(r){4-5} \cmidrule(r){6-7} \cmidrule(r){8-9} 			
		&  error      &  order
		
		&  error      &  order
		
		&  error      &  order
		
		&  error      &  order  \\
		
		\midrule
		
		$1/{2^5}$ &3.3335e-2&-     &2.2177e-2& -
                  &1.7053e-2&-     &1.4462e-2& -       \\
		
		$1/{2^6}$ &1.5464e-2&1.1081&1.0176e-2&1.1239
                  &8.0469e-3&1.0835&6.9912e-3&1.0487    \\
		
		$1/{2^7}$ &7.1006e-3&1.1229&4.7004e-3&1.1143
                  &3.8213e-3&1.0744&3.3680e-3&1.0536    \\
		
	   $1/{2^{8}}$&3.1505e-3&1.1724&2.1154e-3&1.1519
                  &1.7612e-3&1.1175&1.5667e-3&1.1042    \\
		
	   $1/{2^{9}}$&1.3181e-3&1.2571&8.9192e-4&1.2459
                  &7.5221e-4&1.2274&6.7140e-4&1.2225    \\		
		\bottomrule		
	\end{tabular}
\label{ta1}	
\end{table*}

\section{Appendix}

In this section we consider the differential equation:
\begin{eqnarray}\label{eqn.osl}
x_{t} = x_{t_{0}} +\int_{t_{0}}^{t} b(x_{s}) \d s,
\qquad t \in [t_{0}, \infty) \subset \RR_{+} ,
\end{eqnarray}
where
 we assume that

(i) The function $b$ is one-sided Lipschitz: there is a constant $\kappa$ such that
\begin{eqnarray*}
\langle x-y, b(x) - b(y) \rangle \leq \kappa |x-y|^{2},
\qquad \forall x, y \in \RR^{m} \text{ and } t\geq 0.
\end{eqnarray*}

(ii) Both $b$ and $\partial b$ are continuous on $\RR_{+}\times\RR^{m}$. Here $\partial b$ denotes the Jacobian $\frac{\partial b(x)}{\partial x}$ of $b$.

\begin{example}
It is easy to show that the following two equations are examples of  \eqref{eqn.osl} and the   conditions (i)-(ii) are satisfied:

(i) The equation
 \begin{eqnarray*}
y_{t}'  = b(y_{t}+g_{t} ) ,
\end{eqnarray*}
where $b$ is one-sided Lipschitz and $g$ is a continuous function.

(ii) The linear equation
\begin{eqnarray*}
x_{t}' = U_{t} x_{t} ,
\end{eqnarray*}
where the eigenvalues of $U_{t}$ are uniformly bounded for all $t\geq 0$.
\end{example}
The following existence and uniqueness results about equation \eqref{eqn.osl} are mostly  well-known. We   did not find the exact explicit   result   for \eqref{eqn.osl} in the literature  and   so  we include   a proof for  the sake of  completeness.
\begin{proposition}\label{prop.ode}
There exists a unique solution $x$ to  equation \eqref{eqn.osl}, and the solution satisfies the   relation:
\begin{eqnarray*}
|x_{t} |^{2} \leq \left(
|x_{t_{0}}|^{2}+ \int_{t_{0}}^{t} |b(0)|^{2}ds
\right) \cdot e^{(2\kappa +1)(t-t_{0})}.
\end{eqnarray*}
\end{proposition}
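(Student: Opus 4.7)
My plan is to first establish the a priori bound, then use it together with local existence theory to obtain a global solution, and finally to read off uniqueness from essentially the same energy estimate.

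For the a priori bound I would argue as follows. Suppose $x_t$ is a smooth solution of \eqref{eqn.osl} on some interval $[t_0,t^*)$. Differentiating $|x_t|^2$ along the flow and splitting $b(x_t) = [b(x_t)-b(0)] + b(0)$, I would write
\begin{align*}
\frac{d}{dt}|x_t|^2 = 2\langle x_t, b(x_t)-b(0)\rangle + 2\langle x_t, b(0)\rangle \le 2\kappa |x_t|^2 + |x_t|^2 + |b(0)|^2,
\end{align*}
where the one-sided Lipschitz condition (i) is applied with $y=0$ to the first inner product, and the elementary inequality $2\langle a,b\rangle \le |a|^2+|b|^2$ to the second. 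Gronwall's inequality then yields
\begin{align*}
|x_t|^2 \le e^{(2\kappa+1)(t-t_0)}|x_{t_0}|^2 + \int_{t_0}^t e^{(2\kappa+1)(t-s)}|b(0)|^2\,ds \le \Bigl(|x_{t_0}|^2 + \int_{t_0}^t|b(0)|^2\,ds\Bigr)e^{(2\kappa+1)(t-t_0)},
\end{align*}
which is exactly the desired estimate.

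For local existence and uniqueness, I would invoke the standard Picard--Lindel\"of theorem: since $\partial b$ is continuous, $b$ is locally Lipschitz, and so on any bounded ball a classical fixed point argument gives a unique local $C^1$-solution on some $[t_0,t_0+\delta)$. To extend to $[t_0,\infty)$, I would let $[t_0,T_{\max})$ be the maximal interval of existence and argue by contradiction: if $T_{\max}<\infty$, then the a priori bound above shows $\sup_{t<T_{\max}}|x_t| <\infty$, so the solution remains in a bounded region where local existence can be re-applied, contradicting maximality. Hence $T_{\max}=\infty$ and the bound in the statement holds for all $t\ge t_0$.

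Uniqueness on the global interval follows from the same energy technique: if $x$ and $\tilde x$ are two solutions with the same initial value, then the difference $e_t = x_t-\tilde x_t$ satisfies
\begin{align*}
\frac{d}{dt}|e_t|^2 = 2\langle e_t, b(x_t)-b(\tilde x_t)\rangle \le 2\kappa |e_t|^2,
\end{align*}
by the one-sided Lipschitz condition applied directly to $x_t, \tilde x_t$. Since $e_{t_0}=0$, Gronwall gives $e_t\equiv 0$. I do not anticipate any serious obstacle; the only subtle point is that, unlike the classical Lipschitz case, the one-sided Lipschitz condition alone does not immediately give local existence (one cannot do a standard contraction), so it is essential to use the continuity of $\partial b$ to obtain local existence/uniqueness and then rely on the one-sided Lipschitz condition only for the a priori bound and for global uniqueness.
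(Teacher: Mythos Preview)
Your proposal is correct and follows essentially the same route as the paper: the same energy estimate (split $b(x_t)=[b(x_t)-b(0)]+b(0)$, apply the one-sided Lipschitz condition and Gronwall), combined with a maximal-interval extension argument. The only minor difference is that the paper invokes Peano's theorem for local existence (and cites a reference for uniqueness), whereas you use Picard--Lindel\"of via the local Lipschitz property coming from continuity of $\partial b$; both choices work and the overall structure is identical.
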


\begin{proof}
We start by defining
\begin{eqnarray*}
\tau = \sup\big\{ t: \text{there exists a solution to equation \eqref{eqn.osl} on } [t_{0}, t]  \big\}.
\end{eqnarray*}
We first  note that by Peano's theorem  it is easy to see that $\tau>t_{0}$.
On the other hand,     the one-sided Lipschitz condition     implies that for any $t<\tau$ there exists a unique solution on the interval $[t_{0}, t]$ (see e.g. \cite[Lemma 12.1]{hairer1996solving}).

In the following we show that $\tau=\infty$ by contradiction, which then concludes the existence of the solution on $  [t_{0}, \infty)$.

We   calculate
\begin{align*}
|x_{t}|^{2} =&\, |x_{t_{0}}|^{2} +2 \int_{t_{0}}^{t} \langle x_{s}, b(x_{s})  \rangle \d s
\\
=&\, |x_{t_{0}}|^{2} +2 \int_{t_{0}}^{t} \langle x_{s}, b(x_{s}) - b(0)  \rangle \d s
-
2 \int_{t_{0}}^{t} \langle x_{s},   b(0)  \rangle \d s
\\
\leq &\, |x_{t_{0}}|^{2} + 2 \kappa \int_{t_{0}}^{t} |x_{s}|^{2}\d s + \int_{t_{0}}^{t} |x_{s}|^{2}ds + \int_{t_{0} }^{t} |b(0)|^{2}\d s  .
\end{align*}
Applying
Gronwall's inequality yields
\begin{align*}
|x_{t}|^{2} \leq \left(
|x_{t_{0}}|^{2}+ \int_{t_{0}}^{t} |b(0)|^{2}\d s
\right) e^{(2\kappa +1)(t-t_{0})}
\\
 \leq \left(
|x_{t_{0}}|^{2}+ \int_{t_{0}}^{\tau} |b(0)|^{2}\d s
\right) e^{(2\kappa +1)(\tau-t_{0})}.
\end{align*}
This implies that $b(x_{t})$ is bounded on $[t_{0}, \tau)$. Since $x_{t}' = b(t, x_{t})$ we obtain that $x$ is uniform continuous on the interval $[t_{0}, \tau)$. In particular,   the limit $\lim_{t\to \tau-}x_{t}$ exists. This shows that $x$ is a solution to equation \eqref{eqn.osl} on $[t_{0}, \tau]$.

Now consider the equation $x_{t}' = b(x_{t})$ with initial value $x_{\tau} =x_{\tau}$. Peano's theorem implies that the equation has a solution on the interval $[\tau, \tau+\ep]$ for some $\ep>0$. Combining the two functions $(x_{t}, t\in [t_{0}, \tau])$ and $(x_{t}, t\in [\tau,\tau +\ep])$ we obtain a solution to equation \eqref{eqn.osl} on the interval $[t_{0}, \tau+\ep]$. This contradicts to the definition of $\tau$. We conclude that $\tau = \infty$.       The proof is complete.
\end{proof}

\bibliographystyle{amsplain}
\bibliography{onesidedlipschitz}

\end{document}